\pgfplotsset{compat=1.17}
\newcommand{\smat}[1]{\left[\begin{smallmatrix}#1\end{smallmatrix}\right]}
\theoremstyle{plain}
\newtheorem{theorem}{Theorem}[section]
\newtheorem*{theorem*}{Theorem}
\newtheorem{prop}[theorem]{Proposition}
\newtheorem{lemma}[theorem]{Lemma}
\newtheorem{cor}[theorem]{Corollary}
\theoremstyle{remark}
\newtheorem{rem}[theorem]{Remark}
\newtheorem{ex}[theorem]{Example}
\theoremstyle{definition}
\newtheorem{definition}[theorem]{Definition}
\renewcommand{\mathcal}{\mathscr}
\newcommand{\bbN}{\mathbb{N}}
\newcommand{\bbZ}{\mathbb{Z}}
\newcommand{\bbQ}{\mathbb{Q}}
\newcommand{\bbR}{\mathbb{R}}
\newcommand{\bbC}{\mathbb{C}}
\newcommand{\set}[1]{\left\{#1\right\}}
\DeclareMathOperator{\udim}{\underline\dim}
\newcommand{\K}{\mathbb{F}} 
\newcommand{\Int}{\text{\bf I}}
\newcommand{\Barc}{{\text{\bf Bar}}}
\newcommand{\inds}{\mathcal{I}}
\newcommand{\Rep}{\text{\bf Rep}}
\newcommand{\Rect}{\Rep_{\text{\rm rec}}}
\newcommand{\bHN}{\text{\bf HN}}
\newcommand{\bR}{\mathbf{R}}
\newcommand{\HN}[2]{\bHN_{#2}^\bullet(#1)}
\newcommand{\subrep}[1]{\langle {#1} \rangle}
\newcommand{\typeA}{\mathbb A}
\newcommand{\HNtype}[2]{\mathbf{T}[{#2};{#1}]}
\DeclareMathOperator{\Ind}{\text{\rm Ind}}
\newcommand{\eq}{\text{\rm eq}}
\newcommand{\nf}{\text{\rm nf}}
\newcommand{\supp}{\text{\rm supp}}
\newcommand{\sfn}{s}
\newcommand{\tfn}{t}
\newcommand{\VS}{{Q_0}}
\newcommand{\ES}{{Q_1}}
\tikzstyle{redstyle}=[red,fill, fill opacity=0.5,pattern=north west lines, pattern color=red ]
\newcommand{\figureunderlyinggrid}[3]{%
\begin{tikzpicture}[scale=#1]
\foreach \x in {0,...,7}{
\node (\x+) at (\the\numexpr\x+0.5,-0.5){};
}
\foreach \x in {0,...,7}{
\node (\x-) at (\x+0.5,-1.5){};
}
#2
\foreach \x in {0,...,7}{
\draw[gray!40!black!60,->] (\x+) -- (\x-);
}
\foreach \x in {0,...,6}{
\draw[gray!40!black!60,->] (\x+) -- (\the\numexpr(\x+1)+);
\draw[gray!40!black!60,->] (\x-) -- (\the\numexpr(\x+1)-);
}
  #3
\end{tikzpicture}%
}
\title{Harder-Narasimhan Filtrations of Persistence Modules}
\author{Marc Fersztand, Emile Jacquard, Vidit Nanda and Ulrike Tillmann}
\begin{document}

\maketitle
\begin{abstract}
The Harder-Narasimhan type of a quiver representation is a discrete invariant parameterised by a real-valued function (called a central charge) defined on the vertices of the quiver. In this paper, we investigate the strength and limitations of Harder-Narasimhan types for several families of quiver representations which arise in the study of persistence modules. We introduce the skyscraper invariant, which amalgamates the HN types along central charges supported at single vertices, and generalise the rank invariant from multiparameter persistence modules to arbitrary quiver representations. Our four main results are as follows: (1) we show that the skyscraper invariant is strictly finer than the rank invariant in full generality, (2) we characterise the set of complete central charges for zigzag (and hence, ordinary) persistence modules, (3) we extend the preceding characterisation to rectangle-decomposable multiparameter persistence modules of arbitrary dimension; and finally, (4) we show that although no single central charge is complete for nestfree ladder persistence modules, a finite set of central charges is complete.
 \end{abstract}

\section{Introduction} \label{sec: intro}

More than sixty years ago, the notion of {\em semistability} was introduced by Mumford \cite{mumfordicm, mumford} to construct well-behaved quotient spaces for the actions of reductive groups on algebraic varieties. Subsequently, Harder and Narasimhan described a stratification of the moduli space of finite rank vector bundles over a complex curve for the purpose of computing its cohomology groups \cite{hnfilt}. The top stratum consists of semistable bundles, and every bundle lying in a lower stratum admits a canonical filtration of finite length whose associated graded components are semistable with strictly decreasing slopes (i.e., ratios of degree to rank). This {\em Harder-Narasimhan filtration} continues to play a vital role in moduli problems involving vector bundles and coherent sheaves \cite{atiyah-bott, huylehn}; its existence and uniqueness have been established more generally for objects of certain abelian categories by Rudakov \cite{rudakov} to triangulated categories by Bridgeland \cite{bridgeland2007stability} and to modular lattices by Haiden et al \cite{haiden2020semistability}. The work of King \cite{king} and Reineke \cite{Reineke_2003} has extended the Harder-Narasimhan formalism to categories of quiver representations. 

 Our efforts here stem from the desire to use Harder-Narasimhan theory to build isomorphism invariants for multiparameter persistence modules. In general, such modules are representations of wild type quivers, so there is no hope of obtaining a complete\footnote{An invariant is {\em complete} if two persistence modules are isomorphic whenever their invariants are equal.
} discrete invariant. Nevertheless, the quest for discriminative invariants has been a central theme within topological data analysis. The earliest work in this direction was by  Carlsson and Zomorodian \cite{multi}, who proposed the {\em rank invariant}. Subsequent efforts to study multiparameter persistence modules have involved a plethora of tools sourced from diverse locales --- these include sheaf theory \cite{macpat, ks}, commutative and homological algebra \cite{miller, host, bettis}, lattice theory \cite{patel, saecular, botnanoudot} and beyond \cite{rivet}.

\subsection*{Outline and summary of results}
Fix a finite quiver $Q$ and let $\Rep(Q)$ denote the category of finite-dimensional representations of $Q$ valued in vector spaces. By a {\em central charge} on $Q$, we mean any real-valued function $\alpha$ defined on the set $\VS$ of vertices\footnote{This is the imaginary part of an abelian group homomorphism $K(\Rep(Q)) \to \bbC$; see Section \ref{ssec:hnabelian}.}. Consider a nontrivial representation $V \in \Rep(Q)$, which assigns vector spaces $V_x$ to vertices $x \in \VS$. The $\alpha$-slope of $V$ is the ratio
\[
\mu_\alpha(V) = \frac{\sum_{x \in \VS}{\alpha(x) \cdot \dim V_x}}{\sum_{x \in \VS} \dim V_x},
\]
and $V$ is said to be $\alpha$-semistable whenever the inequality $\mu_\alpha(V) \geq \mu_\alpha(V')$ holds for every nontrivial subrepresentation $V' \subset V$. 

Once we fix a central charge $\alpha$ on $Q$, every nonzero representation $V$ admits a unique {\em Harder-Narasimhan} filtration \cite{Reineke_2003, harada} of finite length 
\[
0 = \bHN^0_\alpha(V) \subsetneq \bHN^1_\alpha(V) \subsetneq \cdots \subsetneq \bHN^{n-1}_\alpha(V) \subsetneq \bHN^n_\alpha(V) = V
\]
whose successive quotients $S^i := \bHN^i_\alpha(V)/\bHN^{i-1}_\alpha(V)$ are $\alpha$-semistable and satisfy $\mu_\alpha(S^i) > \mu_\alpha(S^{i-1})$ for all $i$. The Harder-Narasimhan filtration thus provides a canonical method for building arbitrary quiver representations out of the $\alpha$-semistable ones. The {\bf HN type} of $V$ along $\alpha$ is the $n$-tuple of functions
\[
\HNtype{\alpha}{V} = \left(\udim_{S^1},\udim_{S^2} \ldots, \udim_{S^n}\right),
\]
where $\udim_{S^i}:\VS \to \bbN$ assigns the natural number $\dim S^i_x$ to each vertex $x \in \VS$.

An important role in our paper is played by the {\em spanning} subrepresentation of $V$ at a vertex $x$ --- this is defined up to isomorphism as the smallest subrepresentation $\subrep{V_x} \subset V$ containing $V_x$. The function $\rho_V:\VS \times \VS \to \bbN$ that sends each $(x,y)$ to the dimension of $ \subrep{V_x}_y$ vastly generalises the rank invariant of Carlsson and Zomorodian. Consider, for each vertex $x$, the central charge $\delta_x:\VS \to \bbR$ which maps $x$ to $1$ and all other vertices to $0$. The skyscraper invariant $\delta_V$ is defined on $\Rep(Q)$ as the collection of HN types $\HNtype{\delta_x}{V}$ indexed by the vertices of $Q$. 

Our first main result is presented in Section \ref{sec:skyscraper}, and may be stated as follows. 

\begin{theorem*} [\textbf{A}] The skyscraper invariant is finer than the rank invariant on $\Rep(Q)$ for any finite quiver $Q$.
\end{theorem*}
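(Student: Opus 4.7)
The plan is to show that $\rho_V(x,y)=\dim\subrep{V_x}_y$ is a function of the skyscraper data $\delta_V = \{\HNtype{\delta_z}{V}\}_{z\in\VS}$. The starting observation is that the first step of each HN filtration admits an elementary description: for every vertex $z$,
\[
\bHN^1_{\delta_z}(V) \;=\; W^z \;:=\; \text{the maximal subrepresentation of $V$ with trivial $z$-component.}
\]
Indeed, the $\delta_z$-slope of any nonzero subrepresentation lies in $[0,1]$ and equals $0$ precisely when its $z$-component vanishes. The unique maximal such subrepresentation is $W^z$, which is $\delta_z$-semistable by degree reasons, so it must coincide with $\bHN^1_{\delta_z}(V)$. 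The degenerate situation $W^z=0$ (which occurs exactly when $V$ itself is $\delta_z$-semistable with $V_z\neq 0$) can be read off from the HN type directly. Consequently, the dimension vector $\udim W^z$ is extractable from $\HNtype{\delta_z}{V}$ for every vertex $z$.

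For quivers in which any ordered pair of vertices is joined by at most one directed path --- including the ordinary $A_n$ quivers and the grid quivers of multiparameter persistence --- this already suffices. If $p\colon x\to y$ is the unique path, then $W^y_x=\ker V(p)$, so rank-nullity applied to $V(p)\colon V_x\to V_y$ yields
\[
\rho_V(x,y) \;=\; \dim V_x - \dim W^y_x \quad (x\neq y),\qquad \rho_V(x,x) \;=\; \dim V_x.
\]
Reading $\dim W^y_x$ off from $\HNtype{\delta_y}{V}$ therefore recovers the whole rank invariant from the skyscraper data in the single-path regime.

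The main obstacle is the case of quivers with multiple directed paths between the same pair of vertices, for instance the two-arrow Kronecker quiver. There, $\dim\sum_{p}\img V(p)$ can be strictly smaller than $\dim V_x - \dim\bigcap_{p}\ker V(p)$, so the first-step identity above overcounts, and one can exhibit pairs of representations with identical $W^y_x$ for all $y$ but different $\rho_V(x,y)$. To treat the general case one must use more than the bottom step of each HN filtration: the dim-vectors $\udim \bHN^i_{\delta_z}(V)$ for every index $i$ and every vertex $z$ together give a rich family of constraints. I would expect the proof to proceed by induction on the filtration length, applying the single-path argument to the successive $\delta_z$-semistable quotients $S^i$, which are structurally simpler than $V$. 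Formulating this inductive step uniformly across quivers with multi-edges or cycles is where I anticipate the technical core of the proof to lie.
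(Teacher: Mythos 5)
Your ``starting observation'' is false, and it inverts the actual mechanism. In the paper's convention (Theorem \ref{thm:HN_existence}) the first step $\bHN^1_{\delta_z}(V)$ of the HN filtration is the maximal destabilising subobject: it is semistable of \emph{maximal} slope, and every subobject of strictly larger slope is impossible while every subobject of equal slope is contained in it. Your $W^z$ (the maximal subrepresentation with trivial $z$-component) has $\delta_z$-slope $0$, which is the \emph{minimum} possible value; so whenever $V_z\neq 0$ the subrepresentation $\subrep{V_z}$ has strictly positive slope and $\bHN^1_{\delta_z}(V)$ cannot equal $W^z$. A concrete counterexample: for $x\to y$ with $V=(\K^2\twoheadrightarrow\K)$ and $z=y$, one has $W^y=(\K\to 0)$ but $\bHN^1_{\delta_y}(V)=(0\to\K)$. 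The true picture is the opposite of what you wrote: the bottom steps of the $\delta_x$-filtration are spanned by their $x$-components ($V^k=\subrep{V^k_x}$ for $k\le j$, Proposition \ref{prop:filtrationsurjectivity}), the spanning subrepresentation $\subrep{V_x}$ appears at the \emph{top} of the filtration (step $n$ or $n-1$), and the slope-zero piece, if any, is the final quotient rather than the first step. Since your extraction of $\udim W^y$ from $\HNtype{\delta_y}{V}$ rests on the false identification, the single-path formula $\rho_V(x,y)=\dim V_x-\dim W^y_x$ --- which is itself correct as linear algebra --- is not actually computed from the skyscraper data by your argument.

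Separately, even granting the single-path case, you explicitly leave the general case (multiple paths, multi-edges, cycles) as a hoped-for induction, and you correctly note that your kernel-based formula overcounts there; but Theorem A is asserted for \emph{every} finite quiver, so the proof is incomplete exactly where the content lies. The paper's route needs no case split: from $V^j=\subrep{V_x}$ one reads off $\rho_V(x,y)=\sum_{k=1}^{j}\dim S^k_y$ directly from $\HNtype{\delta_x}{V}$, where $j$ is the first index at which the partial sums $\sum_{k\le j}\dim S^k_x$ exhaust $\dim V_x$; this works uniformly with no hypothesis on paths. You would need to replace your first-step claim with (a proof of) Proposition \ref{prop:filtrationsurjectivity}, after which the general formula follows immediately and the induction you anticipate becomes unnecessary.
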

\noindent The full statement may be found in Theorem \ref{thm:rankfromHN}, where we provide a precise formula for recovering $\rho_V(x,y)$ in terms of $\HNtype{\delta_x}{V}$. 
In forthcoming work, we will address the computability and stability (with respect to the interleaving distance \cite{lesnickthesis}) of the skyscraper invariant.

We then specialise in Section \ref{section:pathquiver} to the setting of zigzag persistence modules --- these are finite-dimensional representations of type $\mathbb A$ quivers of arbitrary (but finite) length $\ell \geq 0$:
\[
\xymatrixcolsep{.6in}
\xymatrix{
x_0 \ar@{-}[r]^{e_1} & x_1 \ar@{-}[r]^{e_2} & \cdots \ar@{-}[r]^{e_{\ell-1}} & x_{\ell-1} \ar@{-}[r]^{e_{\ell}} & x_{\ell}.
}
\]
Here each edge $e_i$ points either forward $x_{i-1} \to x_i$ or backward $x_{i-1} \leftarrow x_i$, and ordinary persistence modules correspond to the equioriented case where all of the $e_i$ point forward. It is well known from Gabriel's theorem \cite{gabriel} that any representation $V$ of such a quiver decomposes uniquely as a direct sum of {\em interval representations}, which are supported on sub-intervals $[a,b] \subset [0,\ell]$. The intervals which appear with nonzero multiplicity comprise the {\em barcode} of $V$ --- see \cite{zigzag_pers}. We exploit knowledge of these indecomposables as well as some recent work of Kinser \cite{totalstability} to prove the following result, which is Theorem \ref{theorem: mainAl} below.

\begin{theorem*} [\textbf{B}] There is a classification of all complete central charges on the category of zigzag persistence modules; in the special case of ordinary persistence modules, a central charge $\alpha:\VS \to \bbR$ is complete iff $\alpha(x_i) > \alpha(x_{i+1})$ holds for all $i$.    
\end{theorem*}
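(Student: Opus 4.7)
The plan is to reduce completeness of a central charge to a combinatorial statement about intervals and their slopes, and then to analyse when two distinct interval multisets can produce the same HN type.

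First I invoke Gabriel's theorem: every zigzag persistence module $V$ decomposes uniquely (up to isomorphism) as a direct sum $\bigoplus_j I[a_j,b_j]$ of interval representations, so completeness of a central charge $\alpha$ amounts to the statement that $\HNtype{\alpha}{V}$ determines the multiset $\{I[a_j,b_j]\}$. From the uniqueness of the HN filtration, for such a direct sum each HN quotient $S^i$ must itself be a direct sum of indecomposable summands of $V$ sharing a common $\alpha$-slope. Consequently $\HNtype{\alpha}{V}$ records, for each slope value attained, the sum of the dimension vectors of the intervals realising that slope; since $\mu_\alpha(S^i)$ is recoverable from $\udim_{S^i}$, the HN type packages a total dimension vector for each slope class.

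Specialising to ordinary persistence and assuming $\alpha(x_i) > \alpha(x_{i+1})$ for every $i$, I would establish sufficiency via two lemmas. The first is that every interval $I[a,b]$ is $\alpha$-semistable: its subrepresentations in the equioriented case are exactly the tails $I[c,b]$ with $a \le c \le b$, and a short averaging argument (omitting the strictly largest entry $\alpha(x_a)$ strictly decreases the mean) yields $\mu_\alpha(I[a,b]) > \mu_\alpha(I[c,b])$ whenever $a < c \le b$. The second asserts that the dimension vectors of distinct intervals sharing a common $\alpha$-slope are linearly independent. Concretely, if $\mu_\alpha(I[a,b]) = \mu_\alpha(I[a',b'])$ with $a < a'$, the same monotonicity argument, applied in each endpoint separately, forces $b > b'$; ordering same-slope intervals by left endpoint thus produces a strictly decreasing sequence of right endpoints, and evaluating any putative linear dependence at successive leftmost endpoints peels off the coefficients one by one. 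Together these two lemmas recover the interval multiset of $V$ from $\HNtype{\alpha}{V}$.

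For necessity in the ordinary case, suppose some consecutive pair satisfies $\alpha(x_i) \le \alpha(x_{i+1})$. If $\alpha(x_i) = \alpha(x_{i+1})$, the non-isomorphic modules $I[i,i+1]$ and $I[i,i] \oplus I[i+1,i+1]$ are both $\alpha$-semistable of a common slope and have identical dimension vectors, hence share an HN type. If $\alpha(x_i) < \alpha(x_{i+1})$, the subrepresentation lattice $0 \subsetneq I[i+1,i+1] \subsetneq I[i,i+1]$ shows that $I[i,i+1]$ is not semistable and that its HN successive quotients are $I[i+1,i+1]$ and $I[i,i]$; the same quotients (in the same order) appear in the HN filtration of the non-isomorphic semisimple $I[i,i] \oplus I[i+1,i+1]$, so both modules again share an HN type.

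For the general zigzag classification the strategy is the same in outline but considerably more delicate in execution. The subrepresentation lattice of an interval module now depends on the orientation of each edge $e_i$, so characterising the $\alpha$-semistable intervals (and the classes of intervals sharing a common slope) requires a more intricate combinatorial analysis; here I would invoke the total-stability machinery of Kinser. The hard part will be formulating the sharp linear inequalities on $\alpha$ that simultaneously force (a) every interval to be $\alpha$-semistable and (b) the dimension vectors of distinct same-slope intervals to remain linearly independent. Under arbitrary edge orientations the richer combinatorics of subinterval subrepresentations is the principal technical obstacle, and it is precisely this analysis that drives the full classification stated in Theorem~\ref{theorem: mainAl}.
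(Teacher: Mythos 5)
Your equioriented argument is correct and more self-contained than the paper's, which obtains that case by specialising a general zigzag result. You derive stability of every interval directly from the strict decrease of $\alpha$ (the averaging argument), show that two distinct intervals of equal $\alpha$-slope must have strictly oppositely ordered endpoints, and give explicit non-isomorphic pairs with the same HN type whenever some $\alpha(x_i) \le \alpha(x_{i+1})$. This is a clean elementary route to Corollary \ref{cor:comp_ord_pmod}.

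For the general zigzag classification, however, there is a genuine gap. Kinser's theorem (Theorem \ref{theorem:totalstability}) characterises exactly when every interval $\Int_\tau[a,b]$ is $\alpha$-stable; it does not, by itself, say that stability of all intervals lets you recover the multiplicities from $\HNtype{\alpha}{V}$. That implication is the content of Proposition \ref{prop:completeequivalancetypeA}: assuming all intervals are $\alpha$-stable, one shows that for each fixed right endpoint $b$ the map $a \mapsto \mu_\alpha(\Int_\tau[a,b])$ is injective, via a case split on whether $e_a$ points toward or away from $x_a$ and an application of the seesaw lemma to the appropriate sub- or quotient-representation; multiplicities are then read off in descending order of right endpoints within each HN stratum. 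Your Lemma 2 is exactly the equioriented instance of this injectivity, but you do not supply the orientation-dependent version, and without it the general classification does not follow from Kinser alone. It is also a slight misreading to say one must ``simultaneously force'' semistability and linear independence: the linear-independence property is a consequence of stability (via the injectivity just described), so a single condition --- that every indecomposable be $\alpha$-stable --- is already equivalent to completeness. For the necessity direction in general, Lemma \ref{lemma:completenesscondition} (a complete charge makes every indecomposable stable) combined with Kinser's minimality assertion replaces the per-orientation counterexample construction you would otherwise have to repeat.
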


The natural $d$-dimensional analogues of ordinary persistence modules, for $d > 1$, are certain representations of {\em grid quivers} whose vertices are parameterised by integer points in the product $[0,\ell_1] \times [0,\ell_2] \times \cdots \times [0,\ell_d]$,  with each $\ell_i \geq 1$. The $d=2$ case is illustrated below:
\[
\xymatrixcolsep{.8in}
\xymatrixrowsep{.25in}
\xymatrix{
x_{0,\ell_2} \ar@{->}[r] & x_{1,\ell_2} \ar@{->}[r]_{} & \cdots \ar@{->}[r]_{} & x_{\ell_1,\ell_2} \\
\vphantom{\int^0}\smash[t]{\vdots} \ar@{->}[u] & \vphantom{\int^0}\smash[t]{\vdots} \ar@{->}[u]_{} & {\iddots}  &  \vphantom{\int^0}\smash[t]{\vdots}  \ar@{->}[u]_{}\\
x_{0,1} \ar@{->}[r]^{} \ar@{->}[u]_{} & x_{1,1} \ar@{->}[r]^{} \ar@{->}[u]_{} & \cdots \ar@{->}[r]^{}  &  x_{\ell_1,1} \ar@{->}[u]_{} \\
x_{0,0} \ar@{->}[r]^{} \ar@{->}[u]_{} & x_{1,0} \ar@{->}[r]^{} \ar@{->}[u]_{} & \cdots \ar@{->}[r]^{}  & x_{\ell_1,0} \ar@{->}[u]_{} 
}
\]
The representations of interest are those for which every rectangle in sight commutes as a diagram of vector spaces. Unlike the $d=1$ case, these quivers are of wild representation type; as mentioned above, much effort has been invested towards finding good discrete invariants for multiparameter persistence modules. A far more tractable subcategory is spanned by the {\em rectangle-decomposable} representations, which admit direct sum decompositions into the obvious $d$-dimensional analogue of interval modules. Here we establish the following result, consisting of Corollary \ref{cor:skyrank} and Theorem \ref{thm:complete_param_rect_equiv}.

\begin{theorem*} [\textbf{C}]
    The skyscraper invariant is strictly finer than the rank invariant on the category of multiparameter persistence modules. Moreover, for the subcategory of rectangle decomposable modules, there is a classification of complete central charges which lie outside of a hyperplane arrangement in the space of maps $\VS \to \bbR$.
\end{theorem*}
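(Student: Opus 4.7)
The plan is to address the two claims of Theorem C separately, leveraging Theorems A and B.

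For the strict fineness assertion, Theorem A already gives that equal skyscraper invariants imply equal rank invariants, so I only need to exhibit a pair of multiparameter modules $V$ and $W$ with identical rank invariants but distinct skyscraper invariants. Classical counterexamples to completeness of the rank invariant produce pairs of non-isomorphic $2$-parameter modules sharing a common rank invariant. For one such pair, at a well-chosen vertex $x$ I would compare the spanning subrepresentations $\subrep{V_x}$ and $\subrep{W_x}$: these are typically non-isomorphic, since the rank invariant only records their dimension vectors, whereas $\subrep{V_x}$ carries the full isomorphism class of a canonical subrepresentation. Any such difference will propagate to $\HNtype{\delta_x}{V} \ne \HNtype{\delta_x}{W}$ via the explicit formula behind Theorem A.

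For the classification statement, I would first define a finite hyperplane arrangement $\mathcal H$ in the space $\bbR^{\VS}$ of central charges. For each pair of distinct rectangles $R, R'$ in the grid, and for each upward-closed subset $S \subseteq R$, take the hyperplane where the $\alpha$-slopes of the corresponding interval modules agree; $\mathcal H$ is the union of all such hyperplanes. Outside $\mathcal H$, every rectangle module is $\alpha$-semistable, and distinct rectangles have distinct $\alpha$-slopes. Given a rectangle-decomposable $V = \bigoplus_R M_R^{m_R}$, where $M_R$ denotes the rectangle module on $R$, the HN filtration along such $\alpha$ then has graded pieces $M_R^{m_R}$ arranged in strictly decreasing slope order; hence $\HNtype{\alpha}{V}$ records the multiset of pairs $(R, m_R)$ and determines $V$ up to isomorphism. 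For the converse, adapting the mechanism from Theorem B, I would show that within $\mathcal H$ one can produce non-isomorphic modules with identical HN types, either by swapping rectangles of equal slope or by exploiting destabilising subrepresentations that arise along walls. This pins down the classification as a particular union of chambers characterised by strict slope inequalities.

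The main obstacle is isolating exactly which chambers outside $\mathcal H$ correspond to complete central charges, as the naive condition "all rectangles are semistable and distinct rectangles have distinct slopes" may require refinement. A related technical point is controlling subrepresentations of direct sums $\bigoplus_R M_R^{m_R}$: in dimension $d \ge 2$, such subrepresentations need not split along the rectangle decomposition, so verifying semistability requires analysing destabilising subrepresentations of the full direct sum rather than of each summand separately. I expect this to follow from interval/rectangle-preservation results in recent work on rectangle-decomposable modules, adapted to the product-of-type-$\typeA$ grids considered here.
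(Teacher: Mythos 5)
Your overall architecture (a counterexample for strict fineness, plus a genericity condition and a chamber description for completeness) matches the paper's, but both halves have genuine gaps.

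For the first claim, your mechanism for producing the counterexample does not work as stated. You argue that a classical pair $V \not\simeq W$ with equal rank invariants will have $\subrep{V_x} \not\simeq \subrep{W_x}$ at some vertex, and that this "propagates" to $\HNtype{\delta_x}{V} \neq \HNtype{\delta_x}{W}$ via the formula in Theorem A. But the HN type only records the dimension vectors of the graded pieces of the filtration; non-isomorphic spanning subrepresentations with equal dimension vectors (which is exactly what equal rank invariants give you) are invisible to it. What actually separates the two modules must be a difference in the \emph{filtration structure} along $\delta_x$, e.g.\ one module being $\delta_x$-semistable while the other has a two-step filtration. The paper exhibits such a pair explicitly on the $2\times 2$ grid (the representations $W$, $W'$ in the proof of Theorem \ref{thm:rankfromHN}), and since these are equalised and every grid quiver contains a $(1,1)$-grid, the counterexample transfers. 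You need to actually produce and verify such a pair; the "typically non-isomorphic" heuristic is not a proof.

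For the classification, the central assertion "outside $\mathcal H$, every rectangle module is $\alpha$-semistable" is false: removing the loci where slopes of a rectangle and a sub-up-set coincide leaves chambers where the inequality goes the \emph{wrong} way, so genericity alone gives nothing. The correct characterisation is that, for $\alpha \notin \mathcal H$ (where $\mathcal H$ compares slopes of pairs of rectangles only), completeness is equivalent to $\alpha\circ\sfn(e) > \alpha\circ\tfn(e)$ for every edge $e$; the entire difficulty is the implication from these edge inequalities to $\alpha$-stability of each rectangle module $\Int[R]$ against \emph{all} of its up-closed subrepresentations $\Int[U]$, $U \subsetneq R$. The paper proves this by constructing an auxiliary flow network between the complement $D = R\setminus U$ and $U$, invoking max-flow/min-cut, and bounding cut capacities via a distributive-lattice inequality ($|X|\cdot|Y| \le |X\wedge Y|\cdot|X\vee Y|$). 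You defer exactly this step to unspecified "rectangle-preservation results", so the core of the argument is missing. By contrast, your worry about subrepresentations of $\bigoplus_R \Int[R]^{m_R}$ not splitting is a non-issue: once each $\Int[R]$ is stable and $\alpha\notin\mathcal H$, Corollary \ref{cor:directsumss} and uniqueness of HN filtrations identify the slope filtration with the HN filtration, and the multiplicities are read off as in Proposition \ref{lmm:stab_to_complet_rect }.
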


In Section \ref{sec:ladders}, we focus on to the special case of $2$-parameter persistence modules arising from representations of {\em ladder quivers} of length $\ell \geq 1$:
\[
\xymatrixcolsep{.6in}
\xymatrixrowsep{.3in}
\xymatrix{
x_0^+ \ar@{->}[r] \ar@{->}[d] & x_1^+ \ar@{->}[r] \ar@{->}[d] & \cdots \ar@{->}[r]  & x_{\ell-1}^+ \ar@{->}[r] \ar@{->}[d] & x_\ell^+ \ar@{->}[d] \\
x_0^- \ar@{->}[r] & x_1^- \ar@{->}[r] & \cdots \ar@{->}[r] & x_{\ell-1}^- \ar@{->}[r] & x_\ell^-
}
\]
Again, every rectangle is required to commute. These {\em ladder persistence modules} arise from morphisms of (ordinary) persistence modules. Ladder quivers are known \cite{escolar2016persistence, wildladder} to be of finite representation type only for $\ell \leq 3$. We restrict attention to the subcategory spanned by those representations whose top and bottom rows, when viewed as ordinary persistence modules, do not admit a pair of strictly nested intervals in their barcode decompositions\footnote{ Two intervals
$[i, j]$ and $[i', j']$  are strictly 
nested if  $i<i'$ and $j'<j$.}. It was shown in \cite[Section 5]{barcodebases} that this subcategory is representation-finite for all $\ell$. Here we obtain the following results (Proposition \ref{prop:laddercounterexample} and Theorem \ref{thm:mainladder}) for such {\em nestfree} ladder persistence modules.

\begin{theorem*}[\textbf{D}]
    There is no complete central charge on the category of nestfree ladder persistence modules of length $\ell \geq 4$; however, for all $\ell$ there exists a finite set $A = A(\ell)$ of central charges which is complete on this category.
\end{theorem*}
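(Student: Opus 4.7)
To show that no central charge $\alpha \colon \VS \to \bbR$ can be complete on nestfree ladders of length $\ell \geq 4$, the plan is to exhibit a pair of non-isomorphic nestfree modules $V \not\cong W$ satisfying $\HNtype{\alpha}{V} = \HNtype{\alpha}{W}$ for every $\alpha$. A clean sufficient condition is the existence of an order-preserving, dimension-vector-preserving bijection between the posets of subrepresentations of $V$ and $W$: the slope $\mu_\alpha(V')$ of any subrepresentation $V' \subset V$ depends solely on its dimension vector, and the HN filtration is obtained by iteratively extracting the unique maximal subrepresentation attaining maximum slope; such an iterative construction transports along any such bijection, forcing the sequence of successive-quotient dimension vectors to agree. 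Good candidates for $V, W$ are two distinct indecomposables sharing a dimension vector; since the nestfree ladder category is representation-finite by \cite[Section 5]{barcodebases}, these can be located by explicit enumeration at $\ell = 4$ and embedded into longer ladders by assigning zero spaces at the added vertices.

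\textbf{Plan for Part 2.} For the existence of a finite complete set, the strategy is to exploit representation-finiteness directly. Let $\{X_1, \ldots, X_N\}$ be a complete list of isomorphism classes of indecomposable nestfree ladder persistence modules of length $\ell$; by Krull-Schmidt, every $V$ is determined by its multiplicities $m_j(V) \in \bbN$. I would produce a finite family $A = \{\alpha_1, \ldots, \alpha_k\}$ whose combined HN types detect all of the $m_j(V)$. For each indecomposable $X_j$, I would try to build a central charge $\alpha_j$ under which $X_j$ is $\alpha_j$-semistable and is the unique indecomposable of strictly maximal slope. Then the top stratum of the HN filtration of $V = \bigoplus_i X_i^{m_i(V)}$ along $\alpha_j$ equals $X_j^{m_j(V)}$, and its dimension vector determines $m_j(V)$ (via the known dimension vector $\udim_{X_j}$). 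Iterating over $j$ and taking $A = \{\alpha_1,\ldots,\alpha_N\}$ then recovers every multiplicity.

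\textbf{Main obstacle.} The hardest step is in Part 2: producing, for every $X_j$, a central charge $\alpha_j$ that simultaneously renders $X_j$ semistable and strictly maximal in slope among $\{X_1,\ldots,X_N\}$. Semistability of $X_j$ requires that no proper subrepresentation $X_j' \subset X_j$ destabilize it, and this can fail precisely when $\alpha_j$ is tuned to place $X_j$ above its competitors. Indeed, Part 1 shows that this obstruction is genuine for certain pairs when $\ell \geq 4$. When no single diagnostic $\alpha_j$ exists, I would fall back on an inductive peeling argument: choose $\alpha_j$ so that the top HN stratum of $V$ is a direct sum of $X_j$ together with a controlled set of previously-determined indecomposables, and recover $m_j(V)$ by subtracting their known contributions from the dimension vector of the stratum. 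Making this bookkeeping rigorous for all $X_j$ simultaneously, guided by the explicit list of nestfree ladder indecomposables from \cite{barcodebases}, together with the concrete combinatorial construction of the counterexample pair in Part 1, constitutes the technical heart of the theorem.
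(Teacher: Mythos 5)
There is a genuine gap, and it begins with Part 1. You propose to find a \emph{single} pair of non-isomorphic nestfree modules $V\not\simeq W$ with $\HNtype{\alpha}{V}=\HNtype{\alpha}{W}$ for \emph{every} central charge $\alpha$ (e.g.\ via a dimension-preserving bijection of subrepresentation posets, as the paper itself does for genuinely nested ladder modules). But for nestfree modules no such pair can exist: Part 2 of the very theorem asserts a finite set $A$ of central charges that is complete on $\Rep_\nf(Q)$, so every non-isomorphic pair is already separated by some $\alpha\in A$. The statement "no complete central charge" only requires that for each \emph{fixed} $\alpha$ there be an $\alpha$-dependent witness pair, and that is how the paper argues: by Lemma \ref{lemma:completenesscondition}, a complete $\alpha$ would make every indecomposable $\alpha$-stable, and Proposition \ref{prop:laddercounterexample} derives a contradiction at $\ell=4$ by writing down five explicit inclusions among the indecomposables $\bR^{a,b}_{c,d}$ whose resulting slope inequalities admit a positive linear combination summing to $0<0$. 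Your plan, as stated, is aiming at a target that is provably unattainable.

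For Part 2 your first-pass strategy --- one charge $\alpha_j$ per indecomposable $X_j$ making $X_j$ semistable \emph{and} of strictly maximal slope --- already fails on the smallest examples: for $X=\bR^{1,1}_{1,1}$, semistability forces $\alpha(x_1^-)\le\alpha(x_1^+)$ via the subobject $\bR_{1,1}$, while strict dominance over the quotient $\bR^{1,1}$ forces $\alpha(x_1^-)>\alpha(x_1^+)$ (the seesaw Lemma \ref{lemma:sumstability}). Your fallback "peeling" is the right instinct but is where all the content lives, and it is left unspecified. The paper's resolution is to index the charges not by indecomposables but by pairs $(k,S)$ (total dimension, minimal support vertices), taking $\alpha^k_S$ to be a skyscraper or a two-vertex combination $\delta_{x_a^+}+t_{k,a-c}\delta_{x_c^-}$ with irrational $t$; then every indecomposable $J$ containing some $I\in\inds^k_S$ contributes $\udim_{\subrep{J_S}}$ to $\HNtype{\alpha^k_S}{V}$ at the slope $\lambda^k_S$ (Proposition \ref{prop:ladderhntypes}, via spanning subrepresentations --- the relevant stratum is generally \emph{not} the top one and is \emph{not} a single $X_j^{m_j}$), a linear-independence lemma (Lemma \ref{lemma:ladderlinearindep}) disentangles the several indecomposables sharing the slope $\lambda^k_S$, and the actual multiplicities are recovered by a M\"obius-type inversion $r_I=r^+_I-\sum_{J\supsetneq I}r_J$ over the containment order. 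Without these ingredients the proposal does not yet constitute a proof.
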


\noindent The set $A(\ell)$ is explicitly described in Definition \ref{def:ladderparams}, and its cardinality grows cubically with $\ell$; every constituent element of this set is an $\bbR$-linear combination of at most two skyscraper central charges.

\subsection*{Acknowledgments} The authors are members of the Centre for Topological Data Analysis, funded by the EPSRC grant EP/R018472/1. VN is partially supported by US AFOSR grant FA9550-22-1-0462 and thanks Fabian Haiden and Frances Kirwan for several helpful discussions. The authors are grateful to Veronica Phan for providing in \cite{moanswer} the key idea behind Lemma \ref{lmm:techn_flow}.

\section{HN types of quiver representations}\label{sec:hntypes} 

In this Section, we establish notation and recall pertinent aspects of quiver representations \cite{kirillov_quiver, schiffler2014quiver}, their Harder-Narasimhan filtrations, and the associated Harder-Narasimhan types \cite{Reineke_2003, harada}.

\subsection{Quiver representations} A quiver $Q$ consists of the following data: a set $\VS$ whose elements are called vertices, a set $\ES$ whose elements are called edges, and a pair of functions $\sfn,\tfn:\ES \to \VS$ called the source and target map respectively. One often depicts each edge $e \in \ES$ as an arrow $\sfn(e) \longrightarrow \tfn(e)$. A path in the quiver $Q = (\sfn,\tfn:\ES \to \VS)$ is any finite sequence of edges $p = (e_1,\ldots,e_k)$ satisfying $\tfn(e_i) = \sfn(e_{i+1})$ for all $i$, and $Q$ is called {\em acyclic} if it admits no such paths with $\sfn(e_1) = \tfn(e_k)$. In this paper, all quivers are assumed to have only finitely many vertices and edges, and moreover, we will primarily be interested in acyclic quivers.

We work over a field $\K$ which remains fixed throughout (and hence suppressed from the notation), so that all vector spaces encountered henceforth are defined over $\K$. We recall that a {\bf representation} $V$ of $Q$ constitutes assignments of vector spaces $V_x$ to vertices $x \in \VS$ and linear maps $V_e: V_{\sfn(e)} \to V_{\tfn(e)}$ to edges $e \in \ES$. All representations considered here are finite-dimensional in the sense that $\dim V_x < \infty$ holds for each vertex $x \in \VS$; the \textit{dimension vector} of any such $V$ is the function $\udim_V:\VS \to \mathbb{N}$ given by $x \mapsto \dim V_x$. The set $\Rep(Q)$ of all finite-dimensional representations of $Q$ is readily upgraded to a category as follows. A morphism $\phi:V \to W$ consists of linear maps $\set{\phi_x:V_x \to W_x \mid x \in \VS}$ which make the following diagram commute for each edge $e \in \ES$:
\[
\xymatrixcolsep{1in}
\xymatrixrowsep{.45in}
\xymatrix{
V_{\sfn(e)} \ar@{->}[r]^{\phi_{\sfn(e)}} \ar@{->}[d]_{V_e} & W_{\sfn(e)} \ar@{->}[d]^{W_e} \\
V_{\tfn(e)} \ar@{->}[r]_{\phi_{\tfn(e)}}  & W_{\tfn(e)} 
}
\] 
We call $\phi$ an (epi, iso, mono)-morphism if each $\phi_x$ is (sur, bi, in)-jective; $V$ is called a {\em subrepresentation} of $W$, denoted $V \subset W$, whenever there exists a monomorphism $\phi:V \to W$. The category $\Rep(Q)$ is known to be abelian, with composition, (co)kernels and (co)products being defined vertex-wise \cite[Section 1.3]{schiffler2014quiver}.

A representation of $Q$ is called {\bf indecomposable} if it does not admit any nontrivial direct sum decompositions in $\Rep(Q)$. It is known \cite[Theorem 1.2]{schiffler2014quiver} that for every finite-dimensional representation $V$ of a finite quiver $Q$, there exists a unique set $\Ind_Q(V)$ containing isomorphism classes of indecomposables  and a unique multiplicity function $d_V:\Ind_Q(V) \to \bbZ_{>0}$ for which there is an isomorphism
\begin{align}\label{eq:krullschmidt}
V \simeq \bigoplus_{I} I^{d_V(I)},
\end{align}
with $I$ ranging over $\Ind_Q(V)$. The seminal work of Gabriel \cite{gabriel} established that  the set of indecomposables in $\Rep(Q)$ with a fixed dimension vector is finite if and only if the undirected graph associated to $Q$ is a finite union of simply laced Dynkin diagrams.

\subsection{Harder-Narasimhan filtrations in abelian categories}\label{ssec:hnabelian}
The \textit{Grothendieck group} of an abelian category $\mathcal{C}$ is the abelian group $K(\mathcal C)$ freely generated by the isomorphism classes $[V]$ of  objects $V$ in $\mathcal{C}$ modulo a relation of the form $[V] = [U]+[W]$ for each short exact sequence $0\to U\to V\to W\to 0$ in $\mathcal{C}$. Let $(\bbC,+)$ denote the abelian group of complex numbers under addition; we recall that a \textbf{stability condition}\footnote {Such a homomorphism is sometimes called a {\it linear  stability function} \cite{totalstability} or a {\em central charge} \cite{bridgeland2007stability}.} 
\cite{bridgeland2007stability} on $\mathcal{C}$ is any group homomorphism 
\[
Z:K(\mathcal C)\to (\bbC,+)
\] that sends nonzero objects to the open half-plane $\{z\in \bbC\mid \mathrm{Re}(z)>0\}$. The $Z$-\textbf{slope} of an object $V \neq 0$ is the real number
\[
\mu_Z(V) :=\dfrac{\text{Im } Z(V)}{\text{Re }Z(V)}.
\]
We say that $V$ is $Z$-\textbf{semistable} if the inequality $\mu_Z(U) \leq \mu_Z(V)$ holds for every nonzero subobject $U \subset V$. If this inequality is strict for all subobjects $U \notin \set{0,V}$, then $V$ is said to be $Z$-\textbf{stable}.

The following result is a direct consequence of \cite[Proposition 2.4]{bridgeland2007stability}. We recall that an  abelian category is said to satisfy the Noetherian (respectively, Artinian) hypothesis if every sequence $\cdots \subset a_1 \subset a_0$ of subobjects (respectively, every sequence $b_0 \twoheadrightarrow b_1 \twoheadrightarrow \cdots$ of quotient objects)  eventually stabilises up to isomorphism.

\begin{theorem}\label{thm:HN_existence}
Let $\mathcal C$ be any abelian category which satisfies the Noetherian and Artinian hypotheses. Fix a stability condition $Z$ on $\mathcal C$. Every nonzero object $V$ of $\mathcal C$ admits a unique filtration $V^\bullet$ of finite length $n \geq 1$:
\begin{align} \label{equ:hnfiltration}
    0=V^0 \subsetneq V^1 \subsetneq  \dots \subsetneq V^n=V,
\end{align}
whose successive quotients $S^i := V^i/V^{i-1}$ are $Z$-semistable and have strictly decreasing slopes:
\[
\mu_Z(S^1) > \mu_Z(S^2) > \cdots > \mu_Z(S^n).
\] 
This $V^\bullet$ is called the \textbf{Harder-Narasimhan} (or \textbf{HN}) filtration of $V$ along $Z$.
\end{theorem}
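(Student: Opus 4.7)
The plan is the classical one formalised in this generality by Rudakov and Bridgeland: construct the first term $V^1$ as a maximal destabilising subobject, recursively apply the same construction to the quotient $V/V^1$, and terminate using the Artinian hypothesis. Uniqueness then follows by induction once the first term is known to be canonical.

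The fundamental technical ingredient is a \emph{seesaw property} for slopes: for any short exact sequence $0 \to A \to B \to C \to 0$ in $\mathcal C$ with $A, C \neq 0$, the relation $Z(B) = Z(A) + Z(C)$ together with the positivity of the real parts of $Z(A)$ and $Z(C)$ forces $\mu_Z(B)$ to lie between $\mu_Z(A)$ and $\mu_Z(C)$, with
\[
\mu_Z(A) < \mu_Z(B) \iff \mu_Z(B) < \mu_Z(C),
\]
and analogous equivalences for equality and the reverse inequality. I would prove this by direct computation with complex numbers of positive real part, which reduces to the elementary weighted-average inequality.

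I would then construct the maximal destabiliser $V^1 \subset V$ as the unique largest subobject attaining $\mu^+(V) := \sup\{\mu_Z(U) : 0 \neq U \subset V\}$. The argument runs in two stages. First, the Noetherian hypothesis combined with seesaw shows that $\mu^+(V)$ is finite and is attained by some semistable subobject: any chain of subobjects with strictly increasing slopes must stabilise, so iteratively replacing a subobject by a higher-slope one eventually halts. Second, if $A, B \subset V$ both attain slope $\mu^+$, applying seesaw to $0 \to A \cap B \to A \oplus B \to A + B \to 0$ shows that $A + B$ again attains slope $\mu^+$, so a further appeal to the Noetherian hypothesis produces the desired largest $V^1$. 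By construction $V^1$ is $Z$-semistable, and a seesaw argument on preimages shows that every nonzero subobject of $V/V^1$ has slope strictly less than $\mu^+$.

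Iterating on $V/V^1$ yields a strictly increasing chain $0 = V^0 \subsetneq V^1 \subsetneq V^2 \subsetneq \cdots$ whose successive quotients $S^i$ are $Z$-semistable with strictly decreasing slopes; the Artinian hypothesis applied to the tower $V \twoheadrightarrow V/V^1 \twoheadrightarrow V/V^2 \twoheadrightarrow \cdots$ forces termination at some $V^n = V$. For uniqueness, suppose $W^\bullet$ is another HN filtration of $V$. Repeated seesaw on the induced filtration $\{W^j \cap U\}$ of any subobject $U \subset V$ shows that $\mu_Z(U) \leq \mu_Z(W^1)$, hence $\mu_Z(W^1) = \mu^+(V)$; combined with the maximality of $V^1$ among subobjects of slope $\mu^+$, this gives $W^1 \subset V^1$. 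The reverse inclusion follows symmetrically: the image of $V^1$ in $V/W^1$ would otherwise produce, via the filtration $W^\bullet/W^1$, a semistable subquotient of slope $\geq \mu^+$, violating the strict decrease of slopes. Induction on $V/V^1$ then completes the proof. The main obstacle I anticipate is the careful book-keeping of the seesaw dichotomy in borderline cases where slopes coincide, and ensuring that the supremum $\mu^+$ is actually attained rather than merely approached; both issues are resolved by phrasing the destabilising arguments in terms of chains of subobjects, which is precisely what the Noetherian hypothesis is designed to control.
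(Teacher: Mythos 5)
The paper does not actually prove this statement: it is derived in one line as ``a direct consequence of \cite[Proposition 2.4]{bridgeland2007stability}'', the only locally proved ingredient being the seesaw property (Lemma \ref{lemma:sumstability}). Your proposal instead reconstructs the classical Rudakov--Bridgeland argument in full: seesaw, maximal destabilising subobject, iteration on quotients with termination by the chain condition on quotients, and uniqueness by showing any competing first term $W^1$ computes $\mu^+(V)$ and is absorbed into $V^1$. All of those steps are correct as sketched --- in particular the closure of $\{U \subseteq V : \mu_Z(U) = \mu^+(V)\}$ under sums via the sequence $0 \to A\cap B \to A\oplus B \to A+B \to 0$, and the two halves of the uniqueness argument, are exactly the standard ones. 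So the route is sound and genuinely more self-contained than the paper's citation; the trade-off is that you must supply the one genuinely delicate lemma that Bridgeland's proposition packages away.

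That lemma is the attainment of $\mu^+(V) = \sup\{\mu_Z(U) : 0 \neq U \subseteq V\}$, and your treatment of it is the one soft spot. ``Iteratively replacing a subobject by a higher-slope one'' does not produce a chain under inclusion: a subobject of larger slope need not contain or be contained in the previous one, so the Noetherian/Artinian hypotheses (which control nested chains) do not apply to that sequence of replacements as written. You flag this worry at the end but resolve it only by reasserting that chains will save you. Two standard fixes: (i) since $\mathcal{C}$ is Noetherian and Artinian, every object has finite length, a Jordan--H\"older filtration of any $U \subseteq V$ refines to one of $V$, so the classes $[U] \in K(\mathcal{C})$ of subobjects of $V$ form a finite set and hence only finitely many slopes occur --- the supremum is then trivially a maximum; or (ii) run Bridgeland's actual induction, which constructs a maximal destabilising quotient using the chain condition on quotients rather than optimising the slope directly. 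With either fix inserted, your proof is complete; without one, the existence half has a genuine gap at its very first step.
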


Crucially, the category of finite dimensional representations of a finite quiver satisfies the hypotheses of the above result. It follows from uniqueness that the HN filtration of a $Z$-semistable object $V$ always has length one, i.e., $0 \subsetneq V$.

\subsection{HN types of quiver representations}\label{ssec:hnacyclic}
Let $Q=(\sfn,\tfn:\ES\to \VS)$ be a quiver which remains fixed throughout this subsection. The assignment $V \mapsto \udim_V$ defines a group homomorphism  from the Grothendieck group of $\Rep(Q)$ to the group of functions from $Q_0$ to $ \bbZ$:
\[
\udim: K(\Rep(Q)) \longrightarrow \bbZ^{Q_0}
\]
(For acyclic $Q$, this is an isomorphism \cite[Theorem 1.15]{kirillov_quiver}). We note that any stability condition $Z$ on $\Rep(Q)$ which factors through $\udim$ amounts to a choice of two maps $\alpha:\VS \to \bbR$ and $\beta:\VS \to \bbR_{>0}$. Explicitly, for nonzero $V \in \Rep(Q)$ we have
\[
Z(V) = \sum_{x \in \VS} \left(\beta(x) + \sqrt{-1} \cdot \alpha(x)\right) \cdot \dim V_x,
\]
and the corresponding slope is the ratio
\[
\mu_Z(V) = \frac{\sum_{x \in \VS} \alpha(x) \cdot \dim V_x}{\sum_{x \in \VS} \beta(x) \cdot \dim V_x}.
\]

In this paper, we will only work with stability conditions $Z$ which factor through $\udim$. Furthermore,
as in \cite{Reineke_2003, harada}, we further restrict attention to those stability conditions for which $\beta$ is the constant map sending all vertices to $1$. These are called {\em standard} stability conditions; and any such stability condition $Z$ depends on a single function $\alpha:\VS \to \bbR$ that we will henceforth call the {\bf central charge} of $Z$. In light of these simplifications, we will denote the slope of any nonzero $V$ by 
\begin{align}\label{eq:alphaslope}
\mu_\alpha(V) = \frac{\sum_{x \in \VS} \alpha(x) \cdot \dim V_x}{\sum_{x \in \VS} \dim V_x}.
\end{align} Similarly, the Harder-Narasimhan filtration of any nonzero $V \in \Rep(Q)$ along $Z$ is indicated by $\HN V \alpha$. The following {\em seesaw lemma} is stated in \cite[Lemma 2.2]{Reineke_2003}; we include a proof here for completeness. 
\begin{lemma} \label{lemma:sumstability}
Let $\alpha:\VS\to \bbR$ be a central charge for $Q$. Given any three nonzero objects which fit into a short exact sequence in $\Rep(Q)$:
\[
0 \to U \to V \to W \to 0,
\] 
their $\alpha$-slopes must satisfy one of the following chains of (in)equalities. Either
\begin{enumerate}
	\item $\mu_\alpha(U) > \mu_\alpha(V) > \mu_\alpha(W)$, or
	\item $\mu_\alpha(U) = \mu_\alpha(V) = \mu_\alpha(W)$, or 
	\item $\mu_\alpha(U) < \mu_\alpha(V) < \mu_\alpha(W)$.
\end{enumerate}
In Case (2), $V$ is $\alpha$-semistable if and only if both $U$ and $W$ are $\alpha$-semistable.
\end{lemma}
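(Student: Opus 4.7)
The plan is to reduce everything to a single elementary fact about mediants of fractions. Writing $d_X := \sum_{x\in\VS}\dim X_x$ and $n_X := \sum_{x\in\VS}\alpha(x)\dim X_x$ for any nonzero $X \in \Rep(Q)$, the short exact sequence yields the additivity relations $d_V = d_U + d_W$ and $n_V = n_U + n_W$ (this is just the vertexwise rank-nullity theorem applied at each vertex of $Q$). Hence
\[
\mu_\alpha(V) \;=\; \frac{n_U + n_W}{d_U + d_W},
\]
which is the mediant of $\mu_\alpha(U)=n_U/d_U$ and $\mu_\alpha(W)=n_W/d_W$. Since $d_U, d_W > 0$, the mediant lies strictly between the two fractions when they differ, and equals both when they agree; this immediately yields the trichotomy of cases (1), (2), (3).

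For the equivalence statement in case (2), write $\mu$ for the common value $\mu_\alpha(U)=\mu_\alpha(V)=\mu_\alpha(W)$. The direction $(\Leftarrow)$ proceeds by picking an arbitrary nonzero subobject $V' \subset V$ and setting $U' := V' \cap U$ and $W' := \phi(V')$, where $\phi:V\twoheadrightarrow W$ is the quotient map. I would dispatch the degenerate subcases $U'=0$ (then $V' \hookrightarrow W$) and $W'=0$ (then $V' \subset U$) directly from semistability of $W$ or $U$; in the remaining subcase, both $U'$ and $W'$ are nonzero, fit into a short exact sequence $0 \to U' \to V' \to W' \to 0$, and satisfy $\mu_\alpha(U'), \mu_\alpha(W') \leq \mu$ by semistability of $U$ and $W$. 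Applying the trichotomy to this new sequence forces $\mu_\alpha(V') \leq \mu$.

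For $(\Rightarrow)$, assume $V$ is $\alpha$-semistable. For $U$, any nonzero subobject $U' \subset U$ is also a subobject of $V$, so $\mu_\alpha(U')\leq \mu$. For $W$, the move is to pull back a nonzero subobject $W' \subset W$ to $V' := \phi^{-1}(W') \subset V$, which sits in a short exact sequence $0 \to U \to V' \to W' \to 0$ with $\mu_\alpha(U)=\mu$ and $\mu_\alpha(V')\leq \mu$ by semistability of $V$; the trichotomy then rules out $\mu_\alpha(W') > \mu$, for otherwise case (3) would place $\mu_\alpha(V')$ strictly above $\mu$.

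The only real subtlety is the pullback-subobject manoeuvre used to prove semistability of $W$, since one must verify that $V'$ is a genuine subrepresentation of $V$; this is automatic because $\phi^{-1}$ of a subrepresentation is a subrepresentation in any abelian category, and commutes with all intersections and sums. Everything else is bookkeeping around the mediant identity.
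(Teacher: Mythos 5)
Your proof is correct and follows essentially the same route as the paper's: the trichotomy comes from additivity of dimensions over the short exact sequence (your mediant identity is just the arithmetic form of the paper's parallelogram argument in $\bbC$), and the $(\Leftarrow)$ direction of case (2) is the paper's argument verbatim, via the sequence $0\to V'\cap U\to V'\to \phi(V')\to 0$ and the seesaw. The only divergence is in $(\Rightarrow)$ for $W$, where you pull a subobject $W'\subset W$ back to $\phi^{-1}(W')\subset V$ and apply the trichotomy, whereas the paper argues dually through quotients; both manoeuvres are sound.
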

\begin{proof} 
Since $\arctan:\bbR \to (-\pi/2,\pi/2)$ is a strictly monotone increasing function, it suffices to establish the desired inequalities for the composite $\theta := \arctan \circ \mu_\alpha$ rather than for $\mu_\alpha$. By definition, the (standard) stability condition $Z$ induced by $\alpha$ is a group homomorphism $K(\Rep(Q)) \to (\bbC,+)$, so we have $Z(U) + Z(W) = Z(V)$. The desired results now follow from examining the parallelogram in $\bbC$ determined by the origin, $Z(U)$ and $Z(W)$ whose fourth point must be $Z(V)$. The angle $\theta(V)$ lies between the angles $\theta(U)$ and $\theta(W)$, with equality of all three angles occurring only in the degenerate case where $Z(U)$ is an $\bbR$-multiple of $Z(W)$. 

Let us now consider the case (2) where $U$, $V$ and $W$ share a common $\alpha$-slope $\mu$. If $U$ is not $\alpha$-semistable, then it admits a subrepresentation $U'$ with $\mu_\alpha(U') > \mu$; but any such $U'$ is automatically a subrepresentation of $V$ which violates its $\alpha$-semistability. Similarly, any quotient $W'$ of $W$ with $\mu_\alpha(W') < \mu$ violates the semistability of $V$. Thus, the semistability of $V$ forces semistability of both $U$ and $W$. Conversely, assume that $U$ and $W$ are $\alpha$-semistable and label the maps in the short exact sequence as $\iota:U \to V$ and $\pi:V \to W$. Given any subrepresentation $V' \subset V$, we have a short exact sequence
\[
    0\to \iota^{-1}(V')\to V'\to \pi(V')\to 0.
\]
 In the nontrivial case, $\iota^{-1}(V')$ and $\pi(V')$ are nonzero subrepresentations of $U$ and $W$ respectively, so by semistability both
  must have $\alpha$-slopes no larger than $\mu$. By the first part of this Lemma, we therefore obtain $\mu_\alpha(V') \leq \mu$, which confirms the $\alpha$-semistability of $V$.
\end{proof}

Here is an immediate (but important) consequence of the preceding result.

\begin{cor}\label{cor:directsumss}
If $U$ and $W$ are $\alpha$-semistable objects of $\Rep(Q)$ with the same $\alpha$-slope $\mu$, then their direct sum $U \oplus W$ is also $\alpha$-semistable with slope $\mu$.
\end{cor}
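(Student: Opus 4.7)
The plan is to observe that a direct sum is a particularly simple instance of a short exact sequence, and then invoke the seesaw lemma. Specifically, I would form the canonical split short exact sequence
\[
0 \to U \to U \oplus W \to W \to 0
\]
in $\Rep(Q)$ and feed it into Lemma \ref{lemma:sumstability}. Since $\mu_\alpha(U) = \mu_\alpha(W) = \mu$ by assumption, the two outer terms share the slope $\mu$, which rules out Cases (1) and (3) of that lemma and places us squarely in Case (2); this immediately gives $\mu_\alpha(U \oplus W) = \mu$. (One could equivalently verify this slope equality by direct computation from \eqref{eq:alphaslope}, using $\dim(U_x \oplus W_x) = \dim U_x + \dim W_x$ to write the slope of the direct sum as a weighted mean of the summand slopes.)

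The $\alpha$-semistability of $U \oplus W$ then follows from the ``if'' direction of the biconditional statement in Case (2) of Lemma \ref{lemma:sumstability}: whenever the three terms of a short exact sequence share a common slope and both outer terms are $\alpha$-semistable, the middle term is $\alpha$-semistable as well. Applying this to the split sequence above completes the argument. I expect no serious obstacle here; the corollary is genuinely immediate once the seesaw lemma is in hand, and the only content is the observation that $U \oplus W$ qualifies as the middle term of a split extension whose outer terms satisfy the hypotheses of Case (2).
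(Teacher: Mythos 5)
Your proposal is correct and matches the paper's intent exactly: the corollary is stated there as an immediate consequence of Lemma \ref{lemma:sumstability}, with the split exact sequence $0 \to U \to U \oplus W \to W \to 0$ being precisely the implicit mechanism. Both the slope computation via the trichotomy and the semistability via the Case (2) biconditional are the intended steps.
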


\begin{definition} \label{def:hntype}
The {\bf Harder-Narasimhan type}, or ({\bf HN type}) of a representation $V \neq 0$ in $\Rep(Q)$ along a central charge $\alpha:\VS \to \bbR$ is denoted $\HNtype{\alpha}{V}$ and defined as follows. Let $n$ be the length of the Harder-Narasimhan filtration $\HN{V}{\alpha}$, and let $S^i = \bHN^i_\alpha(V)/\bHN^{i-1}_\alpha(V)$ denote its successive quotients for $1 \leq i \leq n$. Then, 
\[
\HNtype{\alpha}{V} := \Big(\udim_{S^1}, \udim_{S^2}, \ldots, \udim_{S^n}\Big).
\]
\end{definition}

In the context of the preceding definition, it is often useful to view $\HNtype{\alpha}{V}$ as a function $\bbR \to \mathbb{N}^{\VS}$ in the following manner: 
\begin{align}\label{rem:hntypenotation}
\HNtype{\alpha}{V}(\lambda)=
\begin{cases}
\udim_{S^i}& \text{if }  \lambda=\mu_\alpha(S^i)  \textrm{ for some }  i,  \\
(0,0,\ldots,0) & \text{otherwise}.
\end{cases}
\end{align}
This function is well-defined since the successive quotients $S^i$ have strictly decreasing slopes by Theorem \ref{thm:HN_existence}. The uniqueness promised by this theorem further guarantees that $\HNtype{\alpha}{\bullet}$ is invariant under isomorphisms in $\Rep(Q)$. It is evident from Definition \ref{def:hntype} that this invariant is discrete, since it only produces finite sequences of (non-negative) integer values.  Moreover, this invariant is additive under direct sums; a proof of the following folklore result may be found in \cite[Proposition 2.5]{hnzigzag}.

\begin{prop} \label{prop:slopesum}
Let $V$ and $W$ be two representations of a quiver $Q$, and let $\alpha:\VS \to \bbR$ be a central charge. Adopting the notation of \eqref{rem:hntypenotation}, we have 
\[
\HNtype{\alpha}{V \oplus W}=\HNtype{\alpha}{V}+\HNtype{\alpha}{W}
\] as functions $\bbR \to \mathbb{N}^\VS$. 
\end{prop}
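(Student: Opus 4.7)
My plan is to construct the Harder-Narasimhan filtration of $V \oplus W$ explicitly from those of $V$ and $W$, then invoke the uniqueness half of Theorem \ref{thm:HN_existence} to conclude that this constructed filtration is the HN filtration. The additivity of HN types in the sense of \eqref{rem:hntypenotation} will then follow by inspection of the successive quotients.

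In more detail, let $V^\bullet$ and $W^\bullet$ denote the HN filtrations of $V$ and $W$ along $\alpha$, with successive quotients $S^1_V,\ldots,S^n_V$ and $S^1_W,\ldots,S^m_W$ respectively. Let $\lambda_1 > \lambda_2 > \cdots > \lambda_r$ be the ordered list of distinct real numbers in the union $\{\mu_\alpha(S^i_V)\} \cup \{\mu_\alpha(S^j_W)\}$. For each $k \in \{0,1,\ldots,r\}$, I would set $i_k := \#\{i : \mu_\alpha(S^i_V) \geq \lambda_k\}$ (with $i_0 := 0$) and define $j_k$ symmetrically using $W$. The candidate filtration of $V \oplus W$ is then
\[
F^k := V^{i_k} \oplus W^{j_k}, \qquad 0 \leq k \leq r,
\]
which strictly increases from $0$ to $V \oplus W$ since at each step either $i_k$ or $j_k$ (or both) grows by exactly one.

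The successive quotient $F^k/F^{k-1}$ is visibly isomorphic to $T^k := (V^{i_k}/V^{i_{k-1}}) \oplus (W^{j_k}/W^{j_{k-1}})$, where each summand is either $0$ or one of the semistable factors $S^{i_k}_V$, $S^{j_k}_W$ (both of slope $\lambda_k$ whenever they are nonzero). Applying Corollary \ref{cor:directsumss} shows that $T^k$ is $\alpha$-semistable of slope $\lambda_k$, and by construction these slopes strictly decrease as $k$ ascends. Hence $F^\bullet$ satisfies the characterising properties of the HN filtration in Theorem \ref{thm:HN_existence}, and by uniqueness it coincides with $\HN{V\oplus W}{\alpha}$.

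Reading off the HN type: at each slope $\lambda_k$, the value $\HNtype{\alpha}{V\oplus W}(\lambda_k) = \udim_{T^k}$ splits as the sum of the contributions from $V$ and $W$, each of which is either the appropriate $\udim_{S^{i_k}_V}$ (or $\udim_{S^{j_k}_W}$) or zero depending on whether $\lambda_k$ actually occurs as a slope in the corresponding HN filtration. This matches exactly $\HNtype{\alpha}{V}(\lambda_k) + \HNtype{\alpha}{W}(\lambda_k)$ under the convention \eqref{rem:hntypenotation}; for $\lambda \notin \{\lambda_1,\ldots,\lambda_r\}$ both sides vanish identically. The only potential subtlety, which is mild, is bookkeeping when a slope $\lambda_k$ appears in only one of the two HN filtrations, but this is handled uniformly by the convention that the missing summand contributes zero.
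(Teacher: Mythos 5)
Your proof is correct; the paper does not prove this proposition itself but defers to the cited reference, and your argument --- merging the two HN filtrations along the combined ordered slope list, checking semistability of the successive quotients via Corollary \ref{cor:directsumss}, and invoking uniqueness from Theorem \ref{thm:HN_existence} --- is exactly the standard folklore argument being invoked there. The bookkeeping (each $i_k$ increments by at most one per step, and zero summands in the quotients are harmless) is handled correctly, so nothing is missing.
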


\subsection{Complete central charges}

The main theme of this paper is to measure the strength of the HN type as an invariant of certain quiver representations across various choices of central charge. The definition below corresponds to the best-case scenario.

\begin{definition}\label{def:complete}
Let $Q = (\sfn,\tfn:\ES \to \VS)$ be an acyclic quiver and $\mathcal{C}$ any subcategory of $\Rep(Q)$. A collection of central charges $A$ is said to be {\bf complete} on $\mathcal{C}$ if $\HNtype{\alpha}{V} = \HNtype{\alpha}{W}$ for all $\alpha \in A$ implies that $V$ and $W$ are isomorphic in $\mathcal{C}$.
\end{definition}

 If a collection of central charges $A$ is complete on all of $\Rep(Q)$, then we simplify terminology by saying that $A$ is complete for $Q$. For the simplest quivers, one hopes to find a single complete central charge; we will appeal to the following result frequently in our quest for such central charges.

\begin{lemma} \label{lemma:completenesscondition}
If $\alpha:\VS \to \bbR$ is a complete central charge for the acyclic quiver $Q $, then every indecomposable representation in $\Rep(Q)$ is $\alpha$-stable.
\end{lemma}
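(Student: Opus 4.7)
The plan is to argue by contrapositive: I will show that if some indecomposable $V \in \Rep(Q)$ fails to be $\alpha$-stable, then there exists a representation $W$ which is not isomorphic to $V$ yet satisfies $\HNtype{\alpha}{V} = \HNtype{\alpha}{W}$, thereby contradicting completeness of $\{\alpha\}$. In both cases below, the non-isomorphism $V \not\cong W$ is forced by the Krull–Schmidt decomposition \eqref{eq:krullschmidt}, since $W$ will be a direct sum of two or more nonzero representations.

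First I would split into two cases based on whether $V$ is $\alpha$-semistable. If $V$ is not $\alpha$-semistable, then its HN filtration $\HN{V}{\alpha}$ has length $n \geq 2$ with successive quotients $S^1, \ldots, S^n$ of strictly decreasing slopes. I set $W = S^1 \oplus \cdots \oplus S^n$. Because the $S^i$ are $\alpha$-semistable with strictly decreasing slopes, the chain $W^i := S^1 \oplus \cdots \oplus S^i$ is itself the (unique) HN filtration of $W$ along $\alpha$, so the quotients of $\HN{W}{\alpha}$ coincide with those of $\HN{V}{\alpha}$ and $\HNtype{\alpha}{W} = \HNtype{\alpha}{V}$. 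Since $n \geq 2$, $W$ is a nontrivial direct sum and hence cannot be isomorphic to the indecomposable $V$.

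Next, suppose instead that $V$ is $\alpha$-semistable but not $\alpha$-stable. Then there is a nonzero proper subrepresentation $U \subsetneq V$ with $\mu_\alpha(U) = \mu_\alpha(V)$. Setting $W' := V/U$, the short exact sequence $0 \to U \to V \to W' \to 0$ lies in case (2) of Lemma \ref{lemma:sumstability}, so $W'$ also has slope equal to $\mu_\alpha(V)$ and both $U$ and $W'$ are $\alpha$-semistable. By Corollary \ref{cor:directsumss}, the direct sum $W := U \oplus W'$ is $\alpha$-semistable of the same slope, and since the HN filtration of an $\alpha$-semistable object has length one, we obtain $\HNtype{\alpha}{W} = (\udim_U + \udim_{W'}) = (\udim_V) = \HNtype{\alpha}{V}$. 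Again $W$ is a nontrivial direct sum of two nonzero subrepresentations, so $W \not\cong V$ by Krull–Schmidt.

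I do not expect any serious obstacle: the main ingredients (existence and uniqueness of the HN filtration, the seesaw lemma, additivity of HN types under direct sums, and Krull–Schmidt) are all available in the excerpt. The only subtlety is to notice that in the second case one must verify that $U$ and $W'$ are themselves $\alpha$-semistable before invoking Corollary \ref{cor:directsumss}; this is precisely what the case (2) half of Lemma \ref{lemma:sumstability} supplies.
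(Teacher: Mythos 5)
Your proof is correct and follows essentially the same strategy as the paper: in the semistable-but-not-stable case the arguments are identical, and in the non-semistable case you take the full associated graded $S^1\oplus\cdots\oplus S^n$ where the paper uses the two-summand splitting $\bHN^1_\alpha(I)\oplus(I/\bHN^1_\alpha(I))$ --- both verified the same way, by exhibiting an explicit filtration with semistable quotients of strictly decreasing slopes and invoking uniqueness of the HN filtration.
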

\begin{proof} Assume that $\alpha$ is a complete central charge and consider an indecomposable $I$ in $\Rep(Q)$. If $I$ is not $\alpha$-semistable, then its HN filtration
\[
0 \subsetneq \bHN^1_\alpha(I) \subsetneq \bHN^2_\alpha(I) \subsetneq \cdots \subsetneq \bHN^n_\alpha(I) = I
\]
has length $n > 1$. Abbreviating $I^i := \bHN^i_\alpha(I)$, in particular we have $I^1 \subsetneq I$. Now consider the filtration of $I^1 \oplus (I/I^1)$ given by:
\[
0 \subsetneq I^1 \subsetneq I^1 \oplus (I^2/I^1) \subsetneq \cdots \subsetneq I^1 \oplus (I^n/I^1).
\]
Since the successive quotients of this filtration are identical to those of $\HNtype{\alpha}{I}$, it follows (from uniqueness) that this new filtration is precisely $\HN{I^1\oplus(I/I^1)}{\alpha}$. Moreover, since the Harder-Narasimhan type depends only on these successive quotients, we have $\HNtype{\alpha}{I} = \HNtype{\alpha}{I^1\oplus(I/I^1)}$. But since $I$ is indecomposable, it can not be isomorphic to $I^1 \oplus (I/I^1)$ for $I^1 \neq I$, so the completeness of $\alpha$ forces $\alpha$-semistability of $I$. 

Given this $\alpha$-semistability, if $I$ is not $\alpha$-stable, then there exists a nonzero subrepresentation $J \subsetneq I$ with $\mu_\alpha(J)=\mu_\alpha(I)$. Using the exact sequence $0 \to J \to I \to I/J \to 0$ along with Lemma \ref{lemma:sumstability}, we know that both $J$ and $I/J$ are $\alpha$-semistable with slope $\mu_\alpha(I)$. Another appeal to the same Lemma establishes that the direct sum $J \oplus (I/J)$ is also $\alpha$-semistable with slope $\mu_\alpha(I)$. For dimension reasons, $\HNtype{\alpha}{I}$ equals $\HNtype{\alpha}{J \oplus (I/J)}$. But once again, since $I$ is indecomposable, it is not isomorphic to $J \oplus (I/J)$ for $J \neq I$. Thus, if $I$ is not $\alpha$-stable, then $\alpha$ is not a complete central charge for $Q$. 
\end{proof}

\section{The skyscraper and rank invariants}\label{sec:skyscraper}

We study the invariants  defined by  delta functions on  vertices of $Q$, the so-called skyscrapers, and show that they provide a finer invariant than the rank invariant which we define here for representations of any $Q$. 

\subsection{Skyscraper invariant}
Let $Q = (\sfn,\tfn:\ES \to \VS)$ be an arbitrary (i.e., finite, but not necessarily acyclic) quiver. In the absence of specific knowledge regarding the structure of $Q$ or its indecomposable representations, it is not immediately obvious how one might identify interesting classes of central charges for $Q$ \`a la Theorem \ref{theorem:totalstability}. Among the simplest nontrivial central charges which may be defined on any quiver are the ones supported on a single vertex.

\begin{definition}\label{def:skyscraper-weight}
The {\bf skyscraper} central charge at a vertex $x \in \VS$ is the map $\delta_x:\VS \to \bbR$ given by 
\[
\delta_x(y)=
\begin{cases} 1 & \text{if } y=x, \\
0 & \text{otherwise}.
\end{cases}
\]
And the {\bf skyscraper invariant} $\delta_\bullet$ on $\Rep(Q)$ assigns to each representation $V$ the collection of HN types $\delta_V =\set{\HNtype{\delta_x}{V} \mid x \in \VS}$ along skyscraper central charges at all of the vertices.
\end{definition}

\begin{definition}\label{def:spanrep}
Let $S \subset \VS$ be a nonempty subset of vertices. The {\bf spanning subrepresentation} of $V$ at $S$, denoted $\subrep{V_S}$, is the intersection of all subrepresentations $W \subset V$ for which $W_x$ is isomorphic to $V_x$ whenever $x$ lies in $S$. 
\end{definition}

We will simply write 
$\subrep{V_x}$ when $S$ is the singleton $\{x\}$. Spanning representations at singletons determine the HN filtrations along skyscraper central charges.

\begin{prop} \label{prop:filtrationsurjectivity} Given a vertex $x$ of $Q$, let 
$ 0=V^{0} \subsetneq \cdots \subsetneq V^{n}=V$ be the HN filtration of $V$ along $\delta_x$. If $j$ is the smallest index for which $V_x^j$ equals $V_x$, then:
\begin{enumerate}
    \item either $j=n$ or $j=n-1$, and
    \item for every $1 \leq k \leq j$, we have $V^k = \subrep{V^k_x}$.
\end{enumerate} 
\end{prop}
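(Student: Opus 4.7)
The plan is to handle (1) using the elementary observation that all $\delta_x$-slopes lie in $[0,1]$, being zero exactly when the $x$-component is zero, and then to handle (2) by induction on $k$, using the semistability of the successive quotients $S^k$ together with the seesaw lemma (Lemma \ref{lemma:sumstability}).

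For (1), observe that for any nonzero subrepresentation $U$ we have $\mu_{\delta_x}(U)=\dim U_x/\dim U\in [0,1]$, and this vanishes precisely when $U_x=0$. Applied to the successive quotients $S^i=V^i/V^{i-1}$, we get $\mu_{\delta_x}(S^i)=0$ iff $V^i_x=V^{i-1}_x$. Since the HN slopes strictly decrease and remain nonnegative, at most one $S^i$ can have slope $0$, and such an $i$ must be the last one, $i=n$. Consequently, the sequence $\dim V_x^{\bullet}$ is strictly increasing along the filtration, except possibly at the very last step; this yields $j\in\{n-1,n\}$. The same argument moreover shows that for $1\leq k\leq j$ one has $V_x^{k-1}\subsetneq V_x^k$, so $\mu_{\delta_x}(S^k)>0$.

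For (2), I would induct on $k$ (with the trivial base $k=0$). Write $W_k:=\subrep{V^k_x}$; by minimality, $W_k\subseteq V^k$, and we want equality. Assuming $V^{k-1}=W_{k-1}$, project $W_k$ through $\pi\colon V^k\twoheadrightarrow S^k$. Since $(W_k)_x=V^k_x$, the image $\pi(W_k)\subseteq S^k$ has $x$-component $V^k_x/V^{k-1}_x=(S^k)_x$. If $\pi(W_k)\subsetneq S^k$, the quotient $S^k/\pi(W_k)$ is nonzero with zero $x$-component, so has $\delta_x$-slope $0<\mu_{\delta_x}(S^k)$. The seesaw lemma applied to $0\to \pi(W_k)\to S^k\to S^k/\pi(W_k)\to 0$ then forces $\mu_{\delta_x}(\pi(W_k))>\mu_{\delta_x}(S^k)$, contradicting the $\delta_x$-semistability of $S^k$. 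Hence $\pi(W_k)=S^k$, i.e.\ $V^k=W_k+V^{k-1}$.

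To close the induction, note that $(W_k)_x=V^k_x\supseteq V^{k-1}_x$, so $W_k$ already contains $V^{k-1}_x$ at vertex $x$; by the minimality defining the spanning subrepresentation, this gives $W_{k-1}\subseteq W_k$. Combining with the inductive hypothesis $V^{k-1}=W_{k-1}$ then yields $V^k=W_k+V^{k-1}\subseteq W_k$, so $V^k=W_k$. The main subtlety, I expect, is in the second part: one cannot appeal directly to semistability of the filtration step $V^k$ (which typically fails), so the key move is to push the candidate spanning subrepresentation forward to the semistable quotient $S^k$ and extract the contradiction there, then reassemble using the minimality of the spanning construction.
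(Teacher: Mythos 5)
Your proposal is correct. Part (1) is the same argument as the paper's: $\delta_x$-slopes are nonnegative and vanish exactly when the $x$-component of the quotient vanishes, so at most one graded piece (necessarily the last) has slope zero. For part (2) the underlying idea is also the same --- exploit the semistability and strictly positive slope of the graded piece $S^k$, together with the fact that the spanning subrepresentation has full $x$-component --- but your execution of the inductive step differs in a worthwhile way. The paper first proves $S^{k+1}=\subrep{S^{k+1}_x}$ by the slope comparison and then lifts elements of $V^{k+1}_y$ through the quotient map, writing the lift in terms of maps $V_{x\le y}$; strictly speaking that notation is only introduced later for \emph{equalised} representations, and for a general quiver one must replace it by sums over path maps, so the paper's reassembly step is slightly informal. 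You instead push $W_k=\subrep{V^k_x}$ forward to $S^k$, use the seesaw lemma on $0\to\pi(W_k)\to S^k\to S^k/\pi(W_k)\to 0$ to force $\pi(W_k)=S^k$, and then close the induction with the purely order-theoretic observation that $W_{k-1}\subseteq W_k\cap V^{k-1}\subseteq W_k$ (since $W_k\cap V^{k-1}$ is a subrepresentation of $V^{k-1}$ with full $x$-component). This is element-free, valid for an arbitrary finite quiver without further comment, and arguably cleaner than the paper's lifting argument; the only point worth spelling out a little more is the containment $W_{k-1}\subseteq W_k$, which your one-line justification compresses.
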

\begin{proof}
We note that the $\delta_x$-slope of a nonzero representation $W$ of $Q$ is given by
\begin{align}\label{eq:skyslope}
\mu_{\delta_x}(W) = \frac{\dim W_x}{\sum_{y \in \VS} \dim W_y},
\end{align}
which is evidently non-negative. Assuming that $V_x^j=V_x$ holds for some $j$ in $\set{0,\ldots,n}$, we have $V^k_x = V_x$ for all $k \geq j$, whence the successive quotients $S^k := V^k/V^{k-1}$ satisfy $S^k_x = 0$ for all $k > j$. By \eqref{eq:skyslope}, we obtain equalities of slopes: 
\[
0=\mu_{\delta_x}(S^{j+1}) = \mu_{\delta_x}(S^{j+2}) = \cdots = \mu_{\delta_x}(S^{n-1}) = \mu_{\delta_x}(S^n).
\]
Since these slopes are required to strictly decrease in the HN filtration, there are only two possible options. Either $j = n-1$, in which case only the last slope is $0$; or $j = n$, in which case all slopes are non-zero. Thus, we have established assertion (1).  We now prove assertion (2) by induction on $k \in \set{1,\ldots,j}$.  

{\bf Base case:} Since $V^1$ is $\delta_x$-semistable and $\subrep{V^1_x}$ is its subrepresentation, we must have $\mu_{\delta_x}(V^1) \geq \mu_{\delta_x}(\subrep{V^1_x})$, whence 
\[
\frac{\dim V^1_x}{\sum_{y \in \VS}\dim V^1_y} \geq \frac{\dim V^1_x}{\sum_{y \in \VS} \dim ~\subrep{V^1_x}_y}.
\]
If $\dim V^1_x = 0$ then there is nothing to check, so we assume that this dimension is nonzero. Since each $\subrep{V^1_x}_y$ is a subspace of the corresponding $V^1_y$ for  $y \in \VS$, we obtain $V^1=\subrep{V^1_x}$.

{\bf Inductive step:} Assume that $V^k=\subrep{V_x^k}$ holds for some $k < j$. Now $S^{k+1}$ is $\delta_x$-semistable by definition of the HN filtration; and by the argument which established assertion (1), it has a strictly positive $\delta_x$-slope. Thus, we have $\dim S^{k+1}_x > 0$, and applying the base case (to $S^{k+1}$ instead of $V^1$) yields $S^{k+1}=\subrep{S^{k+1}_x}$. Consequently, given any vertex $y \geq x$ and vector $\eta \in V_y^{k+1}$, there exists a vector $\xi \in V_x^{k+1}$ for which the difference $\eta' := V_{x \leq y}(\xi) - \eta$ lies in $V_y^k$. By the inductive hypothesis, this $\eta'$ must equal $V_{x \leq y}(\xi')$ for some $\xi' \in V^k_x$. Therefore, we have $V_{x \leq y}(\xi-\xi') = \eta$, whence $V^{k+1}$ equals $\subrep{V^{k+1}_x}$ as desired.
\end{proof}

\subsection{Rank invariant}

The following notion constitutes a substantial generalisation of the {\em rank invariant}, which was introduced in \cite{multi} for multi-parameter persistence modules.

\begin{definition}\label{def:rankinv}
The {\bf rank invariant} of $V \in \Rep(Q)$ is the map $\rho_V:\VS \times \VS \to \bbN$ given by:
\[
\rho_V(x,y) := \dim~ \subrep{V_x}_y.
\]
\end{definition}

\noindent It follows from the above definition that $\rho_\bullet$ is a discrete isomorphism invariant for $\Rep(Q)$. The following result gives a formula for the rank invariant in terms of the skyscraper invariant $\delta_\bullet$ --- in fact, we show that for any vertex $x$, the rank $\rho_V(x,y)$ can be recovered from the single Harder-Narasimhan type $\HNtype{\delta_x}{V}$. 

\begin{theorem} \label{thm:rankfromHN} Let $Q$ be a finite quiver. The skyscraper invariant is strictly more discriminative than the rank invariant on $\Rep(Q)$ in the following sense.
\begin{enumerate}
    \item Consider $V \in \Rep(Q)$ and any vertex $x$ in $\VS$. If $0=V^{0} \subsetneq \cdots \subsetneq V^{n}=V$ is the HN filtration of $V$ along $\delta_x$, then for any vertex $y \geq x$ of $Q$ we have
\[
\rho_V(x,y) = \sum_{k=1}^j \dim S^{k}_y,
\]
where $S^k := V^k/V^{k-1}$ and $j$ is the smallest index for which $V_x^j$ equals $V_x$.
   \item There exist two representations $W$ and $W'$ of the quiver
   \[
    \xymatrixcolsep{.75in}
    \xymatrixrowsep{.4in}
    \xymatrix
    {
        \bullet \ar@{->}[r] & \bullet \\
        \bullet \ar@{->}[u] \ar@{->}[r] & \bullet \ar@{->}[u]   
    } 
   \]
   for which $\rho_W = \rho_{W'}$ whereas $\delta_W \neq \delta_{W'}$.
\end{enumerate}
\end{theorem}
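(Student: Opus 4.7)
For part (1), the claim is a direct corollary of Proposition~\ref{prop:filtrationsurjectivity}. Item (2) of that proposition gives $V^j = \subrep{V_x^j}$, and by the minimality of $j$ we have $V_x^j = V_x$; combining these yields $V^j = \subrep{V_x}$. Thus $\rho_V(x,y) = \dim \subrep{V_x}_y = \dim V^j_y$, and additivity of dimension along the finite filtration $0 = V^0 \subsetneq V^1 \subsetneq \cdots \subsetneq V^j$ then gives $\dim V^j_y = \sum_{k=1}^{j} \dim S^k_y$, as desired.

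For part (2), my plan is to exhibit concrete counterexamples built from thin indecomposables of the commutative square. Label the vertices as $c$ (bottom-left), $a$ (top-left), $d$ (bottom-right), $b$ (top-right), and for each subset $S \subseteq \{c,a,d,b\}$ supporting a thin indecomposable, write $I_S$ for the representation with $1$-dimensional spaces on $S$, zero elsewhere, and identity structure maps wherever defined. I will set
\[
W := I_{\{c\}} \oplus I_{\{c,a,d\}} \quad \text{and} \quad W' := I_{\{c,a\}} \oplus I_{\{c,d\}}.
\]
Both have dimension vector $(2,1,1,0)$, and a direct bookkeeping over the indecomposable summands shows that their rank invariants agree: $\rho(c,a) = \rho(c,d) = 1$ with all remaining nontrivial ranks vanishing. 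Hence $\rho_W = \rho_{W'}$.

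To distinguish the skyscraper invariants I will compute $\HNtype{\delta_c}{W}$ and $\HNtype{\delta_c}{W'}$. In $W'$ the combined structure map $V_c \to V_a \oplus V_d$ is injective, since the two summands of $W'$ project onto distinct coordinate axes; consequently every proper subrepresentation will have $\delta_c$-slope at most $\mu_{\delta_c}(W') = 1/2$, which makes $W'$ semistable with one-step HN filtration and $\HNtype{\delta_c}{W'} = ((2,1,1,0))$. In $W$, by contrast, the summand $I_{\{c\}}$ lies in the kernel of $V_c \to V_a \oplus V_d$, providing a proper subrepresentation of slope $1 > 1/2 = \mu_{\delta_c}(W)$. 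I will argue that this summand is the unique maximal-slope destabilising subobject, and that the quotient $I_{\{c,a,d\}}$ is itself $\delta_c$-semistable of slope $1/3$, which gives
\[
\HNtype{\delta_c}{W} = ((1,0,0,0),(1,1,1,0));
\]
since this differs from $\HNtype{\delta_c}{W'}$, we conclude $\delta_W \neq \delta_{W'}$.

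The main work is verifying $\delta_c$-semistability of $W'$ and of the quotient $I_{\{c,a,d\}}$, and confirming that $I_{\{c\}}$ really is the maximal-slope subrepresentation of $W$. Each of these reduces to listing subrepresentations in terms of subspaces of $V_c = \K^2$ and of the one-dimensional spaces at $a$ and $d$; the handful of resulting cases is easily checked by hand, with the determination of $V_a'$ and $V_d'$ from $V_c'$ being forced by the structure maps.
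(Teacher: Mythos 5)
Your proposal is correct and takes essentially the same approach as the paper: part (1) is the identical deduction from Proposition~\ref{prop:filtrationsurjectivity} together with additivity of dimension along the filtration. For part (2), your $W$ and $W'$ are, up to isomorphism and a swap of labels, exactly the paper's counterexamples (the paper writes them via explicit matrices rather than indecomposable summands), and the semistability analysis at the minimal vertex is the same.
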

\begin{proof}
By Proposition \ref{prop:filtrationsurjectivity}, we have $j \in \set{n,n-1}$ and $V^j=\subrep{V_x}$. So by Definition \ref{def:rankinv}, the value of $\rho_V(x,y)$ equals $\dim V_y^j$. Since the $S^\bullet$ are successive quotients of the HN filtration $V^\bullet$, we have
\[
\dim V^j_y = \sum_{k=1}^j \dim S^k_y,
\]
which establishes the first assertion. Turning now to the second assertion, let us consider the representations $W$ (left) and $W'$ (right) depicted below: 
\[
\xymatrixcolsep{.75in}
\xymatrixrowsep{.4in}
\xymatrix{
\K \ar@{->}[r]  & 0 & & \K \ar@{->}[r]  & 0  \\
\K^2 \ar@{->}[r]_{[1,0]} \ar@{->}[u]^{[0,1]} & \K \ar@{->}[u]  & & \K^2 \ar@{->}[r]_{[1,0]} \ar@{->}[u]^{[1,0]} & \K \ar@{->}[u]
}
\]
Both evidently have the same rank invariant. Let $x$ be the $\leq$-minimal vertex of this quiver (i.e., the vertex on the bottom-left). By examining (the slopes of) sub-representations, one readily checks that $W$ is $\delta_x$-semistable, so that its HN filtration is just the trivial one $0 \subsetneq W$. On the other hand, $W'$ has a two-step HN filtration $0 \subset U \subset W'$, where $U$ is given by 
\[
\xymatrixcolsep{.75in}
\xymatrixrowsep{.4in}
\xymatrix{
0 \ar@{->}[r]  & 0 \\
\K \ar@{->}[r] \ar@{->}[u] & 0 \ar@{->}[u]
}  
\]
Since $W$ and $W'$ have different HN types along $\delta_x$, the skyscraper invariants $\delta_W$ and $\delta_{W'}$ are distinct as claimed above.
\end{proof}

\section{HN types of zigzag persistence modules} \label{section:pathquiver}

The goal of this section is to characterise complete central charges for type $\typeA_\ell$ quivers.

\subsection{Zigzag persistence modules}
Fix an integer $\ell \geq 0$. A quiver $Q$ is said to be of {\bf type $\typeA_\ell$} whenever its underlying undirected graph has the form
\[
\xymatrixcolsep{.6in}
\xymatrix{
x_0 \ar@{-}[r]^{e_1} & x_1 \ar@{-}[r]^{e_2} & \cdots \ar@{-}[r]^{e_{\ell-1}} & x_{\ell-1} \ar@{-}[r]^{e_{\ell}} & x_{\ell}.
}
\]
We describe the direction of edges via a boolean string $\tau$ of length $\ell$, called the {\em orientation} of $Q$: its $i$-th entry $\tau_i$ indicates whether $e_i$ points forward ($1$) from $e_{i-1}$ to $e_{i}$ or  backward ($0$) from $e_i$ to $e_{i-1}$. For instance, when $\ell = 3$, the sequence $\tau = 100$ implicates the following quiver:
\[
\xymatrixcolsep{.6in}
\xymatrix{
x_0 \ar@{->}[r]^{e_1} & x_1 & x_2 \ar@{->}[l]_{e_2} & x_3 \ar@{->}[l]_{e_3}.
}
\]
We say that $Q$ is {\em equioriented} if every $\tau_i$ equals $1$ (i.e., all edges point forward). Our goal here is to describe all complete central charges $\alpha$ for representations of type $\mathbb{A}_\ell$ quivers.

Representations of type $\mathbb{A}_\ell$ quivers are called {\em zigzag persistence modules} \cite{zigzag_pers}, and these specialise in the equioriented case to {\em ordinary persistence modules} \cite{oudot2015persistence}. It follows from Gabriel's theorem \cite{gabriel} that the indecomposable summands which appear in the decomposition \eqref{eq:krullschmidt} of a nonzero $V \in \Rep(Q)$ have a particularly simple form when $Q$ is of type $\mathbb{A}_\ell$. Each such indecomposable corresponds to a subinterval $[a,b] \subset [0,\ell]$ with integral endpoints. Recalling that $\mathbb{F}$ is the ground field over which all of our vector spaces are defined, the indecomposable $\Int_\tau[a,b] \in \Rep(Q)$ associated to $[a,b]$ has the form
	\begin{align}\label{eq:int-ab}
	\xymatrixcolsep{.35in}
	\xymatrix{
		0 \ar@{<->}[r] & \cdots \ar@{<->}[r]  & 0 \ar@{<->}[r] & \mathbb{F} \ar@{<->}[r] & \cdots \ar@{<->}[r]  & \mathbb{F}\ar@{<->}[r] & 0 \ar@{<->}[r] & \cdots \ar@{<->}[r] & 0.
	}
	\end{align}	Here the arrows point in accordance with the orientation $\tau$ of $Q$, the contiguous string of $\mathbb{F}$'s spans vertex indices $\set{a,a+1,\ldots,b-1,b}$, all maps with source and target $\mathbb{F}$ are identities, and all other vector spaces are trivial. These indecomposables are often called {\em interval} modules.
	
Explicitly, if $Q$ is a type $\mathbb{A}_\ell$ quiver with orientation $\tau$, then associated to each representation $V \in \Rep(Q)$ there exists a unique finite set $\Barc(V)$ consisting of subintervals $[a,b] \subset [0,\ell]$ with $a \leq b$ integers, and a unique function $\Barc(V) \to \mathbb{N}$ sending each $[a,b]$ to its multiplicity $d_{ab}$, so that there is an isomorphism
\begin{align} \label{equ:gabriel}
   V \simeq \bigoplus_{[a,b]} \left(\Int_\tau[a,b]\right)^{d_{ab}}.
\end{align}
Here the direct sum ranges over $[a,b] \in \Barc(V)$. Thus, a central charge $\alpha:\VS \to \bbR$ is complete for $Q$ in the sense of Definition \ref{def:complete} if and only if the multiplicity function $[a,b] \mapsto d_{ab}$ of every $V \in \Rep(Q)$ can be recovered from the HN type $\HNtype{\alpha}{V}$.

\subsection{Characterising complete central charges}

 The first step in our quest to describe all complete central charges of $Q$ is a converse to Lemma \ref{lemma:completenesscondition}. Throughout, we fix a quiver $Q = (\sfn,\tfn:\ES \to \VS)$ of type $\mathbb{A}_\ell$ and denote its orientation by $\tau$.

\begin{prop}\label{prop:completeequivalancetypeA}
A central charge $\alpha:\VS \to \bbR$ is complete for $Q$ if every indecomposable $\Int_\tau[a,b]$ in $\Rep(Q)$ is $\alpha$-stable.
\end{prop}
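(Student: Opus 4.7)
The plan is to prove the converse of Lemma \ref{lemma:completenesscondition}: assuming every indecomposable $\Int_\tau[a,b]$ is $\alpha$-stable, I will show that $\HNtype{\alpha}{V}=\HNtype{\alpha}{W}$ forces $V\cong W$. By Gabriel's theorem, $V \cong \bigoplus\Int_\tau[a,b]^{d_{ab}}$ and $W\cong \bigoplus\Int_\tau[a,b]^{e_{ab}}$, so the goal is to establish $d_{ab}=e_{ab}$ for every sub-interval $[a,b]\subseteq [0,\ell]$.

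First I would construct the HN filtration of $V$ explicitly. For each $\mu\in\bbR$, set $V_\mu := \bigoplus_{\mu_\alpha[a,b]=\mu}\Int_\tau[a,b]^{d_{ab}}$; by Corollary \ref{cor:directsumss} this module is $\alpha$-semistable of slope $\mu$. Listing the distinct slopes appearing among the summands as $\mu_1>\mu_2>\cdots>\mu_n$, the ascending partial sums $V^k := V_{\mu_1}\oplus\cdots\oplus V_{\mu_k}$ form a filtration whose successive quotients $V_{\mu_k}$ are $\alpha$-semistable with strictly decreasing slopes; by the uniqueness clause of Theorem \ref{thm:HN_existence}, this must coincide with the Harder-Narasimhan filtration. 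Hence $\HNtype{\alpha}{V}(\mu_k)=\udim_{V_{\mu_k}}$, and the same analysis applies to $W$. The hypothesis $\HNtype{\alpha}{V}=\HNtype{\alpha}{W}$ thus reduces the proposition to the assertion that the multiplicities of the stable interval summands of slope $\mu$ in an $\alpha$-semistable module are recoverable from its dimension vector; equivalently, the indicator vectors $\{\udim_{\Int_\tau[a,b]}:\mu_\alpha[a,b]=\mu\}$ are linearly independent in $\bbR^{\VS}$.

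For the linear independence, I would introduce the shifted cumulative function $B:\{-1,0,\ldots,\ell\}\to\bbR$ defined by $B_{-1}=0$ and $B_k = \sum_{i=0}^k \alpha(x_i)-\mu(k+1)$. A direct calculation shows that $\mu_\alpha[a,b]=\mu$ precisely when $B_{a-1}=B_b$, so the slope-$\mu$ intervals correspond to the ordered pairs inside a common fibre of $B$. The crucial structural consequence of $\alpha$-stability is that each fibre of $B$ contains at most two indices: if some fibre contained three indices $i_1<i_2<i_3$ with $B_{i_1}=B_{i_2}=B_{i_3}$, then, depending on the orientation of the edge $e_{i_2+1}$, either $\Int_\tau[i_1+1,i_2]$ or $\Int_\tau[i_2+1,i_3]$ would be a proper subrepresentation of $\Int_\tau[i_1+1,i_3]$ having the same slope $\mu$, contradicting its stability. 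Thus slope-$\mu$ intervals biject with two-element fibres, and a parallel case analysis invoking stability rules out the crossing and nesting configurations that could produce linear dependences among the corresponding indicators.

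The hard part is this last step. In the equioriented case, the strict monotonicity of $B$ between the two endpoints of a fibre forces the slope-$\mu$ intervals to be laminar (nested or disjoint), from which linear independence follows by picking a leftmost vertex unique to each outermost interval. For general zigzag orientations two crossing slope-$\mu$ intervals can both be $\alpha$-stable, so the argument must split on the orientations of the edges adjacent to the extremal vertices of any hypothetical minimal dependency, in each case producing a same-slope subrepresentation forbidden by stability. Once linear independence is established, $\udim_{V_\mu}=\udim_{W_\mu}$ forces $d_{ab}=e_{ab}$ slope by slope, completing the proof.
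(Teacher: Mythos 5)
Your proposal is correct and follows essentially the same route as the paper: build the HN filtration of $V$ by grouping interval summands by slope (Corollary \ref{cor:directsumss} plus uniqueness), then show that within a fixed slope class the dimension vectors of the intervals are independent, using the same orientation-dependent sub-versus-quotient dichotomy that the paper uses to prove injectivity of the maps $\phi_b(a)=\mu_\alpha(\Int_\tau[a,b])$. One remark: the step you label as ``the hard part'' is already finished once you know each fibre of $B$ has at most two elements --- distinct slope-$\mu$ intervals then have pairwise disjoint fibres, hence pairwise distinct right endpoints, and evaluating any dependency at the largest remaining right endpoint peels off one coefficient at a time (this is exactly the paper's inductive recovery of the $d_{a_kb_k}$), so no further analysis of crossing or nesting configurations is needed.
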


\begin{proof} Let $V \in \Rep(Q)$ have the decomposition \eqref{equ:gabriel}; we seek to establish that the multiplicities $d_{ab}$ which appear in this decomposition can be recovered from $\HNtype{\alpha}{V}$. To this end, let $\lambda_1 > \lambda_2 > \cdots > \lambda_k$ be the collection of all slopes contained in the set
\[
\set{\mu_\alpha(\Int_\tau[a,b]) \mid [a,b]  \in \Barc(V)}.
\]\
For each $i$ in $\set{1,\ldots,k}$ we denote by $B_i \subset \Barc(V)$ the subset consisting of all $[a,b] $ for which $\mu_\alpha(\Int_\tau[a,b]) \geq \lambda_i$. Consider the filtration $V^\bullet$ of $V$ given by
\[
V^i := \bigoplus_{[a,b]  \in B_i} \Int_\tau[a,b]^{d_{ab}}.
\]
By construction, the quotient $V^{i}/V^{i-1}$ is a direct sum of stable representations with $\alpha$-slope equal to $\lambda_i$. Now Corollary \ref{cor:directsumss} and uniqueness  (described in Theorem \ref{thm:HN_existence}) ensure that $V^\bullet$ is the HN filtration of $V$ along $\alpha$. 

For each $b \in \set{0,1,\ldots,\ell}$, define $\phi_b:\set{0,\ldots,b} \to \bbR$ as $\phi_b(a) := \mu_\alpha(\Int_\tau[a,b])$. We claim that these maps are injective: given $a' < a$, there are two cases to consider, depending on the orientation of $e_a \in \ES$ (or equivalently, on the value of $\tau_a \in \set{0,1}$). If $\tfn(e_a) = x_a$, then there exists a monomorphism $\Int_\tau[a,b] \subset \Int_\tau[a',b]$ and the stability of the latter representation guarantees $\phi_b(a) < \phi_b(a')$. On the other hand, if $\tfn(e_a) = x_{a-1}$ then $\Int_\tau[a,b]$ is a quotient of $\Int_\tau[a',b]$: we have a short exact sequence in $\Rep(Q)$ of the form
\[
\xymatrixcolsep{.4in}
\xymatrix{
0 \ar@{->}[r] & \Int_\tau[a',a-1] \ar@{->}[r] &  \Int_\tau[a',b] \ar@{->}[r] & \Int_\tau[a,b] \ar@{->}[r] & 0.
}
\]
An appeal to Lemma \ref{lemma:sumstability} along with the $\alpha$-stability of $\Int_\tau[a',b]$ gives $\phi_b(a) > \phi_b(a')$. In both cases we obtain $\phi_b(a) \neq \phi_b(a')$ for $a' < a$, whence $\phi_b$ is injective as claimed. 

Given this injectivity, for each fixed $i$ we may order the elements of $B_i$ as 
\[
B_i = \set{[a_1, b_1], [a_2, b_2], \ldots, [a_n, b_n]},
\] where $b_1 > \cdots > b_{n}$. Now the multiplicity $d_{a_1 b_1}$ is precisely the dimension of $V^i/V^{i-1}$ at any vertex $x_j \in \VS$ where $b_2 < j \leq b_1$. Proceeding inductively, we similarly recover the multiplicities $d_{a_kb_k}$ for all $1 < k \leq n$.
\end{proof}

The preceding result, combined with Lemma \ref{lemma:completenesscondition}, confirms that a central charge $\alpha$ for $Q$ is complete if and only if every indecomposable $\Int_\tau[a,b] \in \Rep(Q)$ is $\alpha$-stable. The main result of \cite{totalstability} is a complete characterisation of such central charges in terms of two functions: let $\chi, \eta \colon \set{0,1, \ldots ,\ell} \mapsto \mathbb{N}$ be defined inductively as follows. Beginning with $\chi(0)= 0$ and $\eta(0) = 0$, for each $i > 0$ we set
\[
\chi(i+1) = \begin{cases}
\chi(i)+1 & \text{if } \tau_i = 1 \\
\chi(i) & \text{if } \tau_i = 0
\end{cases}
\quad \text{and} \quad
\eta(i+1) = \begin{cases}
\eta(i)+1 & \text{if } \tau_i = 0 \\
\eta(i) & \text{if } \tau_i = 1
\end{cases}
\]
For instance, when $\tau = 1101$, the function $\chi$ takes on values $(0,1,2,2,3)$ while the function $\eta$ takes on values $(0,0,0,1,1)$ for inputs $(0,1,2,3,4)$:
\[
{\footnotesize
\begin{tikzpicture}[
roundnode/.style={circle,draw, inner sep=1.5pt},scale=1.1,thick,->,>=latex]

\node[roundnode] (0) at (0,0){$0$};
\node[roundnode] (1) at (1,0){$1$};
\node[roundnode] (2) at (2,0){$2$};
\node[roundnode] (3) at (2,1){$3$};
\node[roundnode] (4) at (3,1){$4$};

\draw (0)--(1);
\draw (1)--(2);
\draw (3)--(2);
\draw (3)--(4);

\foreach \x in {0,1,2}{
\draw[gray!25!black!75,opacity=0.4,-]  (\x+0.4,1.75)--(\x+0.4,-0.4);
\node[gray!25!black!75] (lab\x) at (\x,1.75) {$X_{\x}$};
}
\node[gray!25!black!75] (lab3) at (3,1.75) {$X_{3}$};
\draw[gray!25!black!75,opacity=0.4,-]  (-0.75,0.6)--(3.4,0.6);
\node[gray!25!black!75] (lab3) at (-0.75,0) {$Y_{0}$};
\node[gray!25!black!75] (lab3) at (-.75,1) {$Y_{1}$};
\end{tikzpicture}}\]
For each $k \in \mathbb{N}$ we have the level sets $X_k := \set{i \mid \chi(i) = k}$ and $Y_k := \set{i \mid \eta(i) = k}$, both of which are subintervals of $\set{0,\ldots,\ell}$ as $\chi $ and $\eta$ are monotone. Writing $X_k = [a_k,b_k]$ and $Y_k = [a'_k,b'_k]$ for each $k$, we are able to state \cite[Theorem 1.13]{totalstability}.

\begin{theorem}\label{theorem:totalstability}
All indecomposables $\Int_\tau[a,b]$ in $\Rep(Q)$ are $\alpha$-stable for a given central charge $\alpha:\VS \to \bbR$ if and only if the following inequalities hold:
\begin{alignat*}{7} 
    &\mu_\alpha(\Int_\tau[a_0,b_0]) &~>~& \mu_\alpha(\Int_\tau[a_1,b_1]) &~>~& \cdots &~>~& \mu_\alpha(\Int_\tau[a_{\chi(\ell)},b_{\chi(\ell)}]), \\
     &\mu_\alpha(\Int_\tau[a'_0,b'_0]) &~<~& \mu_\alpha(\Int_\tau[a'_1,b'_1]) &~<~& \cdots &~<~& \mu_\alpha(\Int_\tau[a'_{\eta(\ell)},b'_{\eta(\ell)}]). 
\end{alignat*}
\end{theorem}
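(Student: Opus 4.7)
The plan is to prove each direction separately, both anchored in the seesaw dichotomy of Lemma~\ref{lemma:sumstability} applied to short exact sequences among interval modules. I would first record the combinatorial observation that underpins both directions: an interval module $\Int_\tau[a',b']$ embeds as a subobject of $\Int_\tau[a,b]$ if and only if $[a',b'] \subseteq [a,b]$ and the boundary edges satisfy that $e_{a'}$ is forward (when $a' > a$) and $e_{b'+1}$ is backward (when $b' < b$); the corresponding quotient is then identified with $\Int_\tau[a,a'-1] \oplus \Int_\tau[b'+1,b]$.

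For the necessity direction, I assume every $\Int_\tau[a,b]$ is $\alpha$-stable. Consecutive chi-blocks $X_k=[a_k,b_k]$ and $X_{k+1}=[a_{k+1},b_{k+1}]$ are separated by the forward edge $e_{a_{k+1}}$ (since $\chi$ jumps precisely at forward edges), so the preceding embedding criterion yields the short exact sequence
\[
0 \to \Int_\tau[a_{k+1},b_{k+1}] \to \Int_\tau[a_k,b_{k+1}] \to \Int_\tau[a_k,b_k] \to 0.
\]
Stability of the middle term forces the subobject to have strictly smaller $\alpha$-slope, and Case~(1) of Lemma~\ref{lemma:sumstability} then gives the chi-chain inequality $\mu_\alpha(\Int_\tau[a_{k+1},b_{k+1}]) < \mu_\alpha(\Int_\tau[a_k,b_k])$. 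The eta-chain follows from the dual short exact sequence---with the backward edge between $Y_k$ and $Y_{k+1}$ producing $\Int_\tau[a'_k,b'_k]$ as sub and $\Int_\tau[a'_{k+1},b'_{k+1}]$ as quotient---together with Case~(3) of the same lemma.

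For the sufficiency direction, I assume both chains hold and fix $\Int_\tau[a,b]$. I would first reduce stability to checking interval subobjects: any submodule of $\Int_\tau[a,b]$ is supported on a sink-closed subset of $[a,b]$ and decomposes as a direct sum of interval modules, so its slope is a size-weighted convex combination of slopes of interval subobjects. Hence it suffices to show $\mu_\alpha(\Int_\tau[a',b']) < \mu_\alpha(\Int_\tau[a,b])$ for every interval subobject, and via the short exact sequence with quotient $\Int_\tau[a,a'-1] \oplus \Int_\tau[b'+1,b]$ together with Lemma~\ref{lemma:sumstability}, this reduces to the single inequality that the slope of the complement strictly exceeds $\mu_\alpha(\Int_\tau[a,b])$.

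The main obstacle is deriving this complement inequality from the chi and eta chains, which only directly compare \emph{maximal} block slopes whereas the complement may comprise arbitrary prefixes and suffixes of such blocks. I would argue by induction on $\ell$, noting that $a'$ must be a chi-break $a_m$ and $b'+1$ an eta-break $a'_n$. The crux---which I expect to be the hardest step---is an intra-block monotonicity lemma asserting that within any chi-block $X_k$ the slope of any proper prefix $\Int_\tau[a_k,c]$ strictly exceeds $\mu_\alpha(\Int_\tau[a_k,b_k])$, with a dual statement for eta-blocks. Such prefix inequalities are not directly among the hypothesised chains, but I expect they can be bootstrapped by exploiting the interleaved chi/eta stratification: the backward edges inside $X_k$ carve its interior into eta-blocks (most of which are singletons) whose slope comparisons are individually governed by the eta-chain, and conversely for $Y_k$. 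Once intra-block monotonicity is available, telescoping the chi-chain across the prefix and the eta-chain across the suffix and then combining the two one-sided estimates via a weighted average yields the desired complement inequality; the two-sided shrinking case (where both $[a,a'-1]$ and $[b'+1,b]$ are nonempty) demands the most careful bookkeeping.
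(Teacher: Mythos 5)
A preliminary remark: the paper does not prove this statement at all --- it is imported verbatim from Kinser (cited as \cite[Theorem 1.13]{totalstability}) --- so there is no in-paper argument to compare yours against, and your attempt must stand on its own. Your embedding criterion for interval modules is correct, the necessity direction is correct (minor slip: the chi-chain inequality comes from Case (3) of Lemma \ref{lemma:sumstability}, not Case (1)), and the two reductions in the sufficiency direction (to interval subobjects via the convex-combination observation, then to the complement inequality via the seesaw lemma) are sound.

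The gap is precisely at the crux you flag, and it is twofold. First, your ``intra-block monotonicity lemma'' is false as stated: every internal edge of a chi-block $X_k=[a_k,b_k]$ is backward, so a proper prefix $[a_k,c]$ is arrow-closed in $[a_k,b_k]$ and $\Int_\tau[a_k,c]$ is a \emph{subobject} of $\Int_\tau[a_k,b_k]$; stability --- the very conclusion being established --- therefore forces $\mu_\alpha(\Int_\tau[a_k,c])<\mu_\alpha(\Int_\tau[a_k,b_k])$, the opposite of what you assert. Concretely, for $\tau=10$ one has $X_1=\{1,2\}$, and the two hypothesised chains already force $\alpha(x_1)<\alpha(x_2)$, i.e.\ $\mu_\alpha(\Int_\tau[1,1])<\mu_\alpha(\Int_\tau[1,2])$, contradicting your claimed inequality for the prefix $[1,1]$. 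What your telescoping actually requires are slope comparisons for the truncated blocks occurring in the complement $[a,a'-1]\sqcup[b'+1,b]$ --- a \emph{suffix} of the chi-block containing $a$ and a \emph{prefix} of the eta-block containing $b$ --- against the larger intervals they sit inside. Second, even for the correctly oriented statements, these comparisons are genuinely not among the hypothesised inequalities (each block contributes exactly one term to each chain), and your only argument for them is that you ``expect they can be bootstrapped'' from the interleaving of the two stratifications. That bootstrapping is where the entire content of the sufficiency direction lives, so the implication from the two chains to stability is not established.
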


It is also shown in \cite{totalstability} that, for every possible orientation $\tau$, ({\bf a}) this is a {\em minimal} set of inequalities for characterising those central charges along which all indecomposables are stable, and ({\bf b}) the set of all such central charges defines an non-empty open subset in $\mathbb R ^{Q_0}$ which is linearly equivalent to $\mathbb R\times \mathbb R_{>0}^{Q_1}$. These results, when combined with our Proposition \ref{prop:completeequivalancetypeA} and Lemma \ref{lemma:completenesscondition}, completely describe all complete central charges for $\mathbb{A}_\ell$ quivers.

\begin{theorem} \label{theorem: mainAl}
Given an integer $\ell \geq 0$, let $Q$ be a quiver of type $\mathbb{A}_\ell$ and orientation $\tau$. The set of complete central charges for $Q$ is nonempty, and consists precisely of those $\alpha:\VS \to \bbR$ which satisfy the inequalities from Theorem \ref{theorem:totalstability}.
\end{theorem}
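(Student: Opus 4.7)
The proof strategy is to chain together three results that have already been established in the preceding subsections, reducing the assertion to an equivalence between three conditions on a central charge $\alpha \colon \VS \to \bbR$: (i) $\alpha$ is complete for $Q$; (ii) every interval representation $\Int_\tau[a,b]$ in $\Rep(Q)$ is $\alpha$-stable; and (iii) $\alpha$ satisfies the two chains of inequalities from Theorem~\ref{theorem:totalstability}.

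First, I would invoke Lemma~\ref{lemma:completenesscondition}, applied to the acyclic type $\mathbb{A}_\ell$ quiver $Q$, to deduce the implication (i)$\Rightarrow$(ii): completeness of $\alpha$ forces every indecomposable of $\Rep(Q)$ to be $\alpha$-stable, and by Gabriel's theorem the indecomposables of $\Rep(Q)$ are precisely the interval modules $\Int_\tau[a,b]$ (as recorded in \eqref{equ:gabriel}). The reverse implication (ii)$\Rightarrow$(i) is immediate from Proposition~\ref{prop:completeequivalancetypeA}. Together these give the biconditional (i)$\iff$(ii).

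Next, Theorem~\ref{theorem:totalstability} provides exactly the biconditional (ii)$\iff$(iii) by characterising $\alpha$-stability of all interval modules through the two monotonicity chains indexed by the level sets of $\chi$ and $\eta$. Combining the two biconditionals yields that $\alpha$ is complete if and only if it satisfies the inequalities of Theorem~\ref{theorem:totalstability}, establishing the characterisation claimed.

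Finally, for the nonemptiness assertion I would appeal to the remark (cited from \cite{totalstability}) immediately following Theorem~\ref{theorem:totalstability}, which asserts that the solution set of these inequalities is a nonempty open subset of $\bbR^{\VS}$ (in fact linearly equivalent to $\bbR \times \bbR_{>0}^{\ES}$). No step should present real difficulty here, since all of the substantive work has been done in Lemma~\ref{lemma:completenesscondition}, Proposition~\ref{prop:completeequivalancetypeA}, and Theorem~\ref{theorem:totalstability}; the only minor care required is to make sure the equivalence between ``every indecomposable is $\alpha$-stable'' and ``every $\Int_\tau[a,b]$ is $\alpha$-stable'' is flagged explicitly via Gabriel's theorem, since Lemma~\ref{lemma:completenesscondition} is phrased in terms of indecomposables while Proposition~\ref{prop:completeequivalancetypeA} and Theorem~\ref{theorem:totalstability} are phrased in terms of interval modules.
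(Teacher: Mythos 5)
Your proposal is correct and follows exactly the paper's own argument: the equivalence of completeness with $\alpha$-stability of all interval modules via Lemma \ref{lemma:completenesscondition} and Proposition \ref{prop:completeequivalancetypeA}, the translation to the inequalities via Theorem \ref{theorem:totalstability}, and nonemptiness from the cited result of Kinser. No substantive difference from the paper's proof.
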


\noindent We note here that the set of complete central charges admits a particularly appealing description in the case where $Q$ is equioriented. 

\begin{cor}\label{cor:comp_ord_pmod}
 For ordinary persistence modules, a central charge $\alpha$ is complete if and only if the inequality $\alpha(x_i) > \alpha(x_{i+1})$ holds for all $i \in \set{0,1,\ldots,\ell-1}$.
 \end{cor}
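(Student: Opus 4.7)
The plan is to specialise Theorem \ref{theorem: mainAl} to the equioriented case by computing the functions $\chi$ and $\eta$ explicitly, and then evaluating the slopes of the indecomposables $\Int_\tau[a_k,b_k]$ and $\Int_\tau[a_k',b_k']$ that appear in the inequalities of Theorem \ref{theorem:totalstability}.

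First, I note that in the equioriented case every $\tau_i$ equals $1$. By the inductive definition of $\chi$ and $\eta$, this immediately forces $\chi(i) = i$ and $\eta(i) = 0$ for every $i \in \set{0,1,\ldots,\ell}$. Consequently, the level sets of $\chi$ are the singletons $X_k = \set{k}$ for $k \in \set{0,1,\ldots,\ell}$, while $\eta$ has a single nonempty level set $Y_0 = \set{0,1,\ldots,\ell}$. In the notation of Theorem \ref{theorem:totalstability}, this means $[a_k,b_k] = [k,k]$ for all $k$, whereas the $Y_k$ chain contributes only $[a_0',b_0'] = [0,\ell]$ and hence produces no nontrivial inequality.

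Next, I compute slopes. The interval module $\Int_\tau[k,k]$ is one-dimensional, supported solely at the vertex $x_k$, so by \eqref{eq:alphaslope} its $\alpha$-slope is simply $\alpha(x_k)$. Therefore, the first chain of inequalities in Theorem \ref{theorem:totalstability} collapses to
\[
\alpha(x_0) > \alpha(x_1) > \cdots > \alpha(x_\ell),
\]
which is equivalent to the condition $\alpha(x_i) > \alpha(x_{i+1})$ for every $i \in \set{0,1,\ldots,\ell-1}$. The second chain places no constraint, as it involves only a single term.

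Finally, I invoke Theorem \ref{theorem: mainAl}: $\alpha$ is complete for the equioriented type $\mathbb{A}_\ell$ quiver if and only if these inequalities hold. I do not expect any serious obstacle; the only mild care needed is in verifying that the degenerate second chain $\mu_\alpha(\Int_\tau[0,\ell])$ really does impose no further condition, which follows from the fact that a singleton chain is vacuously ordered.
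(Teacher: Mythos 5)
Your proposal is correct and follows essentially the same route as the paper: specialise Theorem \ref{theorem: mainAl} to $\tau = 11\cdots 1$, compute $\chi(i)=i$ and $\eta \equiv 0$, and observe that the resulting inequalities from Theorem \ref{theorem:totalstability} reduce to $\alpha(x_0) > \alpha(x_1) > \cdots > \alpha(x_\ell)$ since each $\Int_\tau[k,k]$ has slope $\alpha(x_k)$. Your extra remark that the $\eta$-chain is a singleton and hence vacuous is a slightly more explicit bookkeeping of the same observation the paper makes implicitly.
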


 \begin{proof}
   If $\tau = 11 \ldots 1$, then the function $\chi:\set{0,1,\ldots,\ell} \to \mathbb{N}$ is given by $\chi(i) = i$, whereas the function $\eta$ is identically zero. We therefore seek any $\alpha:\VS \to \bbR$ which satisfies $\mu_\alpha(\Int_\tau[i,i]) > \mu_\alpha(\Int_\tau[i+1,i+1]) $ for all $i$. By \eqref{eq:alphaslope} and \eqref{eq:int-ab}, this string of inequalities reduces to
\begin{align}\label{eq:totallystablecondsstandardpers}
\alpha(x_0) > \alpha(x_1) > \cdots >\alpha(x_{\ell-1}) > \alpha(x_\ell),
\end{align}
as desired.
\end{proof}

\section{HN types of multiparameter persistence modules} \label{sec:multi}

Finding good invariants for multiparameter persistence modules is a central challenge in topological data analysis. In this section we prove a  generalisation  of  Corollary \ref{cor:comp_ord_pmod} to the multiparameter setting. 

\subsection{Multiparameter persistence modules as equalised representations}
Let $Q =(\sfn,\tfn:\ES\to\VS)$ be an acyclic quiver. Its source and target maps may be extended from edges to paths $\gamma = (e_1,\ldots,e_k)$ by setting $\sfn(\gamma) := \sfn(e_1)$ and $\tfn(\gamma) := \tfn(e_k)$. Given a representation $V \in \Rep(Q)$, there is a distinguished linear map $V_\gamma:V_{\sfn(\gamma)} \to V_{\tfn(\gamma)}$ induced by the composite
\[
V_\gamma :=  V_{e_k} \circ V_{e_{k-1}} \circ \cdots \circ V_{e_2} \circ V_{e_1}.
\]

\begin{definition}
    We say that $V \in \Rep(Q)$ is {\bf equalised} if the following property holds: for any pair of vertices $x,y \in \VS$ and any pair of paths $\gamma,\gamma'$ with common source $x$ and common target $y$, the composite maps $V_\gamma$ and $V_{\gamma'}$ are identical. Let $\Rep_\eq(Q) \subset \Rep(Q)$ be the full subcategory spanned by equalised representations.
\end{definition} 

\begin{ex}
 A large class of interesting equalised representations arises in the study of {\em cellular sheaves} \cite{curry2014sheaves}. Every such sheaf $\mathcal{F}$ on a regular CW complex $X$ is a functor from the face-ordered poset of cells $(X,\prec)$ to the category $\textbf{Vec}(\K)$ of $\K$-vector spaces. The {\em Hasse graph} of $X$ is the quiver $Q(X)$ whose vertices correspond bijectively to the cells of $X$, with a unique edge $\sigma \to \tau$ being present whenever $\sigma$ is a face of $\tau$ of codimension one. Any given sheaf $\mathcal{F}:(X,\prec) \to \textbf{Vec}(\K)$ on $X$ induces a representation $V(\mathcal{F})$ of $Q(X)$ as follows: every vertex $\sigma$ is assigned the vector space $\mathcal{F}(\sigma)$ and every edge $\sigma \to \tau$ is assigned the linear map $\mathcal{F}(\sigma \prec \tau)$. The fact that $\mathcal{F}$ is a functor directly implies that $V(\mathcal{F})$ is equalised.
 \end{ex}

\begin{definition} \label{def:floworder}
    The {\bf flow partial order} on vertices $\VS$ of the acyclic quiver $Q$ is defined as follows: given $x$ and $y$ in $\VS$, we have $x \leq y$ if either $x = y$ or if there exists a path $\gamma$ in $Q$ with $\sfn(\gamma) = x$ and $\tfn(\gamma) = y$. 
\end{definition}

Given $V \in \Rep_\eq(Q)$ and a pair of vertices $x \leq y$ in $\VS$, we write $V_{x \leq y}:V_x \to V_y$ to indicate the map defined by any path $\gamma$ from $x$ to $y$, with the understanding that this map is the identity for $y = x$. Since $V$ is equalised, this is well-defined and it follows that the image of $V_{x \leq y}$ is isomorphic to $\subrep{V_x}_y$ (from Definition \ref{def:spanrep}). Thus, the value of $\rho_V(x,y)$ is precisely the rank of $V_{x \leq y}$ when $V$ is equalised. This is the genesis of the terminology of the rank invariant, which was introduced in \cite{multi} to study certain equalised representations of grid quivers, described below.

Let us fix a vector $L = (\ell_1,\ell_2, \ldots, \ell_d)$ of $d \geq 2$ integers, with each $\ell_i \geq 1$. Here we consider the case where $Q = (\sfn,\tfn:\ES \to \VS)$ is the $d$-dimensional {\em grid quiver} of shape $L$, defined as follows. Its vertices $x_p$ are indexed by all $p$ lying in the product 
\[
\Lambda(L) := \prod_{i=1}^d\set{0,1,\ldots,\ell_i},
\] and there exists a unique edge $x_p \to x_q$ whenever $q-p$ is a standard basis vector of $\bbR^d$. Here, for instance, is the quiver of shape $L = (\ell,2)$ for arbitrary $\ell \geq 1$:
\[
\xymatrixcolsep{.6in}
\xymatrixrowsep{.4in}
\xymatrix{
x_{0,2} \ar@{->}[r] & x_{1,2} \ar@{->}[r]_{} & \cdots \ar@{->}[r]_{} & x_{\ell-1,2} \ar@{->}[r]_{} & x_{\ell,2} \\
x_{0,1} \ar@{->}[r]^{} \ar@{->}[u]_{} & x_{1,1} \ar@{->}[r]^{} \ar@{->}[u]_{} & \cdots \ar@{->}[r]^{}  & x_{\ell-1,1} \ar@{->}[r]^{} \ar@{->}[u]_{}& x_{\ell,1} \ar@{->}[u]_{} \\
x_{0,0} \ar@{->}[r]^{} \ar@{->}[u]_{} & x_{1,0} \ar@{->}[r]^{} \ar@{->}[u]_{} & \cdots \ar@{->}[r]^{}  & x_{\ell-1,0} \ar@{->}[r]^{} \ar@{->}[u]_{}& x_{\ell,0} \ar@{->}[u]_{} 
}
\]
Equalised representations of $d$-dimensional grid quivers are also referred to as  {\bf $d$-parameter persistence modules} \cite{multi}. We note that every grid quiver $Q$ contains an embedded copy of the grid quiver of shape $L = (1,1)$, and that both the representations $W$ and $W'$ which appeared in the proof of Theorem \ref{thm:rankfromHN} are equalised. Thus, we obtain the following consequence.

\begin{cor}\label{cor:skyrank}
    Given any integer $d \geq 2$, let $Q$ be the grid quiver corresponding to some integer vector $L = (\ell_1,\ldots,\ell_d)$ with each $\ell_i \geq 1$. 
    \begin{enumerate}
        \item The skyscraper invariant $\delta_V$ of  $V \in \Rep_\eq(Q)$ determines its rank invariant (via the formula in Theorem \ref{thm:rankfromHN}).
        \item There exist representations $W$ and $W'$ in $\Rep_\eq(Q)$ which have identical rank invariant and satisfy $\delta_W \neq \delta_{W'}$.
    \end{enumerate}
\end{cor}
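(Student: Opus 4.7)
The plan is to derive both assertions from the corresponding parts of Theorem \ref{thm:rankfromHN}, which treats arbitrary (not necessarily equalised) representations of arbitrary finite quivers. For assertion (1), I would split the pairs $(x,y) \in \VS \times \VS$ according to whether $y \geq x$ in the flow partial order on $Q$. When $V$ is equalised and $y \geq x$, the spanning subrepresentation $\subrep{V_x}$ at $y$ is precisely the image of $V_{x \leq y}$, and Theorem \ref{thm:rankfromHN}(1) recovers $\rho_V(x,y)$ as a prescribed dimension sum read off from $\HNtype{\delta_x}{V}$, which is a component of $\delta_V$. When $y \not\geq x$, starting with $V_x$ at $x$ and closing under edge maps never reaches $y$, so $\subrep{V_x}_y = 0$ and $\rho_V(x,y) = 0$ is determined by the quiver alone. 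Assembling these cases recovers $\rho_V$ from $\delta_V$.

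For assertion (2), I would port the pair $(W, W')$ exhibited in the proof of Theorem \ref{thm:rankfromHN}(2) into $Q$ along the evident embedding of the $2 \times 2$ grid as a subquiver, say $(i,j) \mapsto (i,j,0,\ldots,0)$. Two checks suffice. First, $W$ and $W'$ are themselves equalised: the only pair of parallel paths in the $2 \times 2$ grid runs from $(0,0)$ to $(1,1)$, and since the target vertex carries the zero vector space in both representations, the two composites trivially agree. Second, define $\bar W$ and $\bar W'$ to be the zero extensions of $W$ and $W'$ to $Q$, assigning the zero vector space to every vertex outside the embedded grid and the zero map to every edge not internal to it. These extensions remain equalised, since any path composite visiting an unembedded vertex contains at least one zero map.

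The only non-routine step is the faithful transfer of rank and skyscraper invariants across the zero extension, and this is the main obstacle. The key observation is that any subrepresentation $U \subset \bar W$ must vanish at every vertex outside the embedded grid (since $\bar W$ does), so restriction sets up a dimension-preserving bijection between subrepresentations of $\bar W$ and subrepresentations of $W$. This bijection preserves $\delta_x$-slopes for every vertex $x$ in the embedded grid, and hence carries the HN filtration of $W$ along $\delta_x$ to that of $\bar W$ along the corresponding central charge on $Q$. It follows that $\HNtype{\delta_x}{\bar W} \neq \HNtype{\delta_x}{\bar W'}$ because the unextended types already differ, witnessing $\delta_{\bar W} \neq \delta_{\bar W'}$, while $\rho_{\bar W} = \rho_{\bar W'}$ follows because both sides agree with $\rho_W = \rho_{W'}$ on the embedded grid and vanish on every pair of vertices involving a vertex outside it.
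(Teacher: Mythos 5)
Your proposal is correct and follows essentially the same route as the paper, which deduces part (1) directly from Theorem \ref{thm:rankfromHN}(1) and part (2) by embedding the shape-$(1,1)$ grid quiver into $Q$ and observing that the representations $W$ and $W'$ from that theorem's proof are equalised. The paper leaves the zero-extension and invariant-transfer checks implicit; your verification that subrepresentations of the extension restrict bijectively (preserving dimension vectors and hence skyscraper slopes and HN types) correctly fills in those details.
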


\subsection{Rectangle-decomposable representations}
In general, grid quivers are of wild representation type and one cannot expect to obtain a tractable classification of all indecomposable objects in $\Rep_\eq(Q)$. One can, however, impose a higher-dimensional analogue of \eqref{equ:gabriel} by passing to the subset of {\em rectangle-decomposable} representations, which we describe below. 

As before, let $Q = (\sfn,\tfn:\ES \to \VS)$ be the grid quiver of shape $L = (\ell_1,\ell_2, \dots,\ell_d)$ for $d\geqslant 2$ and all $\ell_i \geq 1$. Given any subset $P \subset \Lambda(L)$, we write $\Int[P]$ to denote the representation of $Q$ which assigns vector spaces
\[
\Int[P]_{x_p} = \begin{cases}
\K & \text{if } p \in P, \\
0 & \text{otherwise};
            \end{cases} 
\] 
the linear map associated to each edge is the identity whenever both source and target spaces are $\K$, and it must necessarily equal zero otherwise. By a {\bf rectangle representation} we mean $\Int[R]$, where $R := [a_1,b_1] \times\dots \times [a_d,b_d]$ for some $[a_i,b_i] \subset [0,\ell_i]$ is a rectangle inside $\Lambda(L)$. We note that $\Int[R]$ is always equalised when $R$ is a rectangle. We write $\Rect(Q)$ for the full subcategory of $\Rep_\eq(Q)$ spanned by objects which are (isomorphic to) direct sums of rectangle representations. The rank invariant is complete when restricted to this subcategory   \cite{botnan2022rectangle, clause2022discriminating};
and by Corollary \ref{cor:skyrank}, so is the skyscraper invariant. 

Our goal in this subsection is to prove a much sharper result --- we extend Corollary \ref{cor:comp_ord_pmod} to the category of rectangle-decomposable representations of arbitrary dimension $d$ by classifying the set of complete central charges. For this purpose, it is necessary to exclude from consideration a finite union of hyperplanes in the vector space of central charges:
\begin{align}\label{eqn:hyperplanes}
\mathcal H:=\bigcup_{R\neq R'} \set{\alpha:Q_0 \to \bbR \mid \mu_\alpha(\Int[R])=\mu_\alpha(\Int[R'])},
\end{align}
where $R$ and $R'$ range over distinct rectangles in $\Lambda(L)$. The following result serves to justify this exclusion. 

\begin{prop}\label{lmm:stab_to_complet_rect }
    If $\alpha\notin \mathcal H$ is a central charge for which each rectangle representation $\Int[R] \in \Rep_\eq(Q)$ is $\alpha$-stable, then $\HNtype{\alpha}\bullet$ is complete on $\Rect(Q)$.
\end{prop}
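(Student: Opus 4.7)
The plan is to mimic the strategy used in Proposition \ref{prop:completeequivalancetypeA} for type $\typeA_\ell$ quivers, taking advantage of the fact that rectangle representations play the role that interval modules did in the zigzag case. Specifically, I want to construct, for any $V\in\Rect(Q)$, a canonical filtration whose successive quotients are pure (i.e., isotypic) sums of rectangle representations ordered by decreasing $\alpha$-slope, and then invoke uniqueness of the HN filtration to identify it with $\HN{V}{\alpha}$.

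First, I would write $V\simeq \bigoplus_R \Int[R]^{d_R}$ and let $R_1,\ldots,R_k$ be the rectangles that appear with nonzero multiplicity $d_{R_i}$. Because $\alpha\notin \mathcal H$, the slopes $\mu_\alpha(\Int[R_i])$ are pairwise distinct, so after relabelling we may assume
\[
\mu_\alpha(\Int[R_1])>\mu_\alpha(\Int[R_2])>\cdots>\mu_\alpha(\Int[R_k]).
\]
Define the filtration $V^i:=\bigoplus_{j\leq i} \Int[R_j]^{d_{R_j}}$ for $0\leq i\leq k$. Each successive quotient $V^i/V^{i-1}$ is isomorphic to $\Int[R_i]^{d_{R_i}}$, a direct sum of copies of the $\alpha$-stable representation $\Int[R_i]$ sharing the common slope $\mu_\alpha(\Int[R_i])$; hence it is $\alpha$-semistable by Corollary \ref{cor:directsumss}. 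Since the slopes strictly decrease in $i$ by construction, uniqueness in Theorem \ref{thm:HN_existence} forces $V^\bullet$ to be $\HN{V}{\alpha}$.

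The remaining step is to show that the multiplicities and rectangles can be read off from $\HNtype{\alpha}{V}$ alone. The HN type records precisely the sequence of dimension vectors $\udim_{V^i/V^{i-1}}=d_{R_i}\cdot \chi_{R_i}$, where $\chi_{R_i}:\VS\to\{0,1\}$ is the indicator of $R_i$. Since any such nonzero function determines both $R_i$ (as its support, which is automatically a rectangle in $\Lambda(L)$) and $d_{R_i}$ (as its value at any vertex of that support), we recover the full multi-set of pairs $(R_i,d_{R_i})$ from $\HNtype{\alpha}{V}$. By Krull--Schmidt, this determines $V$ up to isomorphism in $\Rect(Q)$, establishing completeness.

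I do not anticipate a serious obstacle here: the condition $\alpha\notin\mathcal H$ is precisely what is needed to avoid two distinct rectangles being merged into a single semistable stratum of the HN filtration, and $\alpha$-stability of each individual $\Int[R]$ is what guarantees that no rectangle representation itself splits across strata. The only small care required is the observation that a nonzero dimension vector of the form $c\cdot \chi_R$ uniquely determines both the scalar $c\in\bbN_{>0}$ and the rectangle $R\subset \Lambda(L)$; this is immediate from extracting the support and a single nonzero coordinate.
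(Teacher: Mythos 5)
Your proof is correct and follows essentially the same route as the paper's: both rest on the rectangle decomposition of $V$, the fact that $\alpha\notin\mathcal H$ forces distinct rectangles to have distinct slopes, and the recovery of each pair $(R,d_R)$ from a dimension vector of the form $d_R\cdot\udim_{\Int[R]}$. The only cosmetic difference is that you build the HN filtration explicitly (as in Proposition~\ref{prop:completeequivalancetypeA}) and invoke Corollary~\ref{cor:directsumss} plus uniqueness, whereas the paper shortcuts this step by citing the additivity of HN types under direct sums (Proposition~\ref{prop:slopesum}).
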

\begin{proof}
    Given $V\in\Rect(Q)$, consider its decomposition 
    \[
    V\simeq\bigoplus_R \Int[R]^{m_R},
    \] 
    where the direct sum is indexed over all subrectangles $R \subset \Lambda(L)$, of which only finitely many have multiplicity $m_R > 0$. It suffices to recover these multiplicities from the HN type of $V$ along $\alpha$. By Proposition \ref{prop:slopesum}, for each real number $c \in \bbR$ we have
    \[    
    \HNtype{\alpha}{V}(c) = \sum_{\mu_\alpha(R)=c}m_{R}\cdot\udim_{\Int[R]}
    \]
    Since $\alpha \notin \mathcal{H}$ by assumption, the multiplicity $m_{R'}$ of any rectangle $R'$ can be obtained by letting $c = \mu_\alpha(R')$, so the sum simplifies to $\HNtype{\alpha}{V}(c) = m_{R'} \cdot \udim_{\Int[R']}$.
\end{proof}

The following is a multiparameter generalisation of Corollary \ref{cor:comp_ord_pmod}.

\begin{theorem}\label{thm:complete_param_rect_equiv1}
Let $Q$ be a grid quiver of shape $L = (\ell_1,\ell_2,\ldots,\ell_d)$ for any $\ell_i \geq 1$. A central charge $\alpha \notin \mathcal H$ is complete on $\Rep_\eq(Q)$ if and only if it satisfies the inequality $\alpha \circ \sfn(e) > \alpha \circ \tfn(e)$ for each edge $e \in \ES$.    
\end{theorem}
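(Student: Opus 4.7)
The plan is to prove both directions of the biconditional, with the $\alpha$-stability of rectangle representations serving as the bridge.

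For the forward direction, assume $\alpha$ is complete on $\Rep_\eq(Q)$. The subcategory $\Rep_\eq(Q) \subseteq \Rep(Q)$ is abelian and closed under subrepresentations, quotients, direct sums, kernels, and cokernels, so the proof of Lemma \ref{lemma:completenesscondition} carries over verbatim to show that every indecomposable in $\Rep_\eq(Q)$ must be $\alpha$-stable. For each edge $e : p \to q$ in $\ES$, the two-vertex rectangle $\Int[\{p, q\}]$ is an indecomposable object of $\Rep_\eq(Q)$ whose unique nontrivial proper subrepresentation (forced by the commutativity constraint) is $\Int[\{q\}]$. Its $\alpha$-stability yields $\mu_\alpha(\Int[\{q\}]) < \mu_\alpha(\Int[\{p, q\}])$, which simplifies to $\alpha(p) > \alpha(q)$, i.e., $\alpha \circ \sfn(e) > \alpha \circ \tfn(e)$.

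For the backward direction, assume the edge inequalities. The key step is to establish the $\alpha$-stability of every rectangle representation $\Int[R]$ with $R \subseteq \Lambda(L)$. Since the transition maps of $\Int[R]$ within $R$ are identities, subrepresentations of $\Int[R]$ correspond bijectively to up-closed subsets $S \subseteq R$ (in the componentwise partial order) via $\Int[S] \subseteq \Int[R]$. Thus $\alpha$-stability of $\Int[R]$ reduces to the averaging inequality
\begin{equation*}
\frac{1}{|S|}\sum_{p \in S} \alpha(p) \;<\; \frac{1}{|R|}\sum_{p \in R}\alpha(p)
\end{equation*}
for every proper nonempty up-closed $S \subsetneq R$. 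I would prove this by induction on the dimension $d$: the base case $d = 1$ is the monotonicity underlying Corollary \ref{cor:comp_ord_pmod}, and the inductive step decomposes $R$ into $(d-1)$-dimensional slices along a coordinate axis, combining the slice-wise inequalities with the edge inequalities in the sliced coordinate direction. Given rectangle stability, Proposition \ref{lmm:stab_to_complet_rect } yields completeness of $\HNtype{\alpha}{\bullet}$ on the rectangle-decomposable subcategory $\Rect(Q) \subseteq \Rep_\eq(Q)$.

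The main obstacles are twofold. First, the averaging inequality resists a naive induction that removes minimal elements of $S$ one at a time, since the running average need not be monotone in $|S|$ (as small $2 \times 2$ examples confirm); the slicing induction, or alternatively a fractional-matching argument between $S$ and $R \setminus S$ that respects the product-of-chains structure of the grid poset, is essential. Second, if the theorem's conclusion is to be read as completeness on the full $\Rep_\eq(Q)$ rather than on its rectangle-decomposable subcategory, then an additional structural argument is needed to distinguish non-rectangle-decomposable equalised representations from rectangle-decomposable ones that share the same HN type along $\alpha$; this final extension would lean on the genericity condition $\alpha \notin \mathcal H$ together with a finer analysis of the successive quotients appearing in the HN filtration of an arbitrary equalised representation.
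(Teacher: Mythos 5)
Your overall architecture matches the paper's: the forward direction via stability of the two-vertex rectangles $\Int[\set{p,q}]$, and the backward direction by reducing completeness to $\alpha$-stability of all rectangle representations via Proposition \ref{lmm:stab_to_complet_rect }, which in turn reduces to the averaging inequality $\tfrac{1}{|S|}\sum_{p\in S}\alpha(p)<\tfrac{1}{|R|}\sum_{p\in R}\alpha(p)$ for proper nonempty up-closed $S\subsetneq R$. The problem is that this averaging inequality \emph{is} the theorem --- everything else is routine --- and your proposal only gestures at it. You correctly diagnose that peeling off minimal elements one at a time fails, but the "slicing induction ... combining the slice-wise inequalities with the edge inequalities in the sliced coordinate direction" is not an argument: the slices of $S$ carry weights $|S_j|/|S|$ while the slices of $R$ carry uniform weights, and nothing you wrote explains why the reweighting goes the right way. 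As it stands this is a genuine gap in the only nontrivial step. That said, the route is salvageable and is genuinely different from the paper's, which constructs a flow network between $U$ and its complement, invokes max-flow/min-cut, and certifies the min-cut bound via a correlation inequality for distributive lattices (Lemma \ref{lmm:techn_flow}, via Proposition \ref{prop:4funtheorem}). To close your version: slicing $R$ along the last coordinate into slabs $R_0,\dots,R_{\ell_d}$ with $S_j=S\cap R_j$, up-closedness gives $S_j\subseteq S_{j+1}$ after projection, so the sizes $s_j=|S_j|$ are non-decreasing, while the slab averages $A_j$ of $\alpha$ are strictly decreasing (pointwise strict decrease along the sliced coordinate); the inductive hypothesis gives $\mathrm{avg}(S_j)\leq A_j$, and Chebyshev's sum inequality applied to the oppositely ordered sequences $(s_j)$ and $(A_j)$ yields $\sum_j s_jA_j/\sum_j s_j\leq \sum_j A_j/(\ell_d+1)$, with strictness in one of the two steps whenever $S$ is proper and nonempty. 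With that supplied, your proof is complete and arguably more elementary than the paper's.

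Two smaller points. First, your closing worry about $\Rep_\eq(Q)$ versus $\Rect(Q)$ is well taken but should be resolved, not hedged: no single central charge can be complete on all of $\Rep_\eq(Q)$, since grid quivers are of wild representation type and the HN type is a discrete invariant; the theorem (consistently with Theorem C in the introduction and with Proposition \ref{lmm:stab_to_complet_rect }) is a statement about $\Rect(Q)$, and no "additional structural argument" could upgrade it. Second, in the forward direction, if completeness is only assumed on $\Rect(Q)$ then you cannot quote Lemma \ref{lemma:completenesscondition} verbatim, because the comparison object $I^1\oplus(I/I^1)$ built in its proof need not be rectangle-decomposable for a general rectangle $I$; for the two-vertex rectangle this is harmless (the comparison object is $\Int[\set{p}]\oplus\Int[\set{q}]$, which lies in $\Rect(Q)$), but you should say so rather than invoke the lemma for all indecomposables of $\Rep_\eq(Q)$.
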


The remainder of the section will be occupied by the proof.

\vskip .1in
We first establish a technical result in lattice theory.
Let us recall the flow partial order on $\VS$ from Definition \ref{def:floworder}. A subset of vertices $U\subset Q_0$ is said to be {\bf up-closed} if $x \in U$ and $y \geq x$ implies $y \in U$. Given an arbitrary nonempty subset $A \subset \VS$, we denote by $A^+$ the smallest up-closed subset of $\VS$ containing $A$. The poset $(\VS,\leqslant)$ has a structure of finite lattice with $\wedge$ and $\vee$ given by applying respectively $\min$ and $\max$ coordinate-wise. One can check that this lattice is distributive (the distributive law is  true in each coordinate), and hence that it satisfies the following standard inequality \cite[Corollary 6.1.3]{alon2008probabilistic}.

\begin{prop}\label{prop:4funtheorem}
    Let $(L,\wedge,\vee)$ be a finite distributive lattice. For any subsets $X,Y \subset L$, we have the inequality
    \[ 
    |X|\cdot |Y| \leq |X\wedge Y|\cdot |X\vee Y|, \text{ where:}
\]
\begin{enumerate}
    \item $|\bullet|$ denotes cardinality, 
    \item $X\vee Y:= \set{x\vee y\mid x\in X \text{ and } y\in Y}$, and
    \item $X \wedge Y:= \set{x\wedge y\mid x\in X \text{ and }y\in Y}$.
\end{enumerate}
\end{prop}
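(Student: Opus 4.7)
The plan is to identify this inequality as a direct corollary of the \emph{Ahlswede--Daykin Four Functions Theorem}, which is the result referenced via Alon--Spencer (and the source of the label \texttt{4funtheorem}). That theorem asserts the following: for a finite distributive lattice $L$ and non-negative functions $f_1, f_2, f_3, f_4 : L \to \bbR_{\geq 0}$ satisfying the pointwise inequality $f_1(a) \cdot f_2(b) \leq f_3(a \vee b) \cdot f_4(a \wedge b)$ for all $a, b \in L$, one has the summed inequality $\tilde f_1(A) \cdot \tilde f_2(B) \leq \tilde f_3(A \vee B) \cdot \tilde f_4(A \wedge B)$ for every pair of subsets $A, B \subseteq L$, where $\tilde f_i(S) := \sum_{s \in S} f_i(s)$.

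The deduction of the proposition from this theorem is a one-line reduction: apply it with $f_1 = f_2 = f_3 = f_4 \equiv 1$ (the constant function) and with $A := X$, $B := Y$. The pointwise hypothesis degenerates to the tautology $1 \leq 1$, and the summed conclusion $\tilde f_1(X) \cdot \tilde f_2(Y) \leq \tilde f_3(X \vee Y) \cdot \tilde f_4(X \wedge Y)$ becomes precisely $|X| \cdot |Y| \leq |X \vee Y| \cdot |X \wedge Y|$, since $\sum_{s \in S} 1 = |S|$. This reduction is the only step that genuinely uses the distributive lattice structure; note that the inequality can in fact fail in non-distributive lattices such as the pentagon $N_5$, so distributivity must be used somewhere, and it is used precisely in the Four Functions Theorem itself.

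If a self-contained proof of the Four Functions Theorem were desired, I would proceed via Birkhoff's representation: embed $L$ as a sublattice of a Boolean lattice $\{0,1\}^n$, extend each $f_i$ by zero on the complement, and verify that the pointwise hypothesis is preserved by this extension (which is routine once one observes that both sides of a violated pointwise inequality would involve at least one zero factor). Then induct on $n$: the inductive step writes $\{0,1\}^n = \{0,1\}^{n-1} \times \{0,1\}$ and, for any pair of subsets $A, B \subseteq \{0,1\}^n$, decomposes them according to the value of the last coordinate; applying the $n = 1$ case coordinate-wise produces four functions on $\{0,1\}^{n-1}$ that satisfy the pointwise hypothesis, and the inductive hypothesis then concludes.

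The main technical obstacle is the base case $n = 1$: the nontrivial inequality $(f_1(0) + f_1(1))(f_2(0) + f_2(1)) \leq (f_3(0) + f_3(1))(f_4(0) + f_4(1))$ does not follow by merely summing the four pointwise hypotheses, because the pointwise bound on $f_1(0) f_2(1)$ and on $f_1(1) f_2(0)$ both route through $f_3(1) f_4(0)$, leaving the cross term $f_3(0) f_4(1)$ unaccounted for. A more careful algebraic rearrangement (essentially the heart of the Four Functions Theorem) is needed here, typically by treating the degenerate zero cases separately and then using a multiplicative regrouping. This is the only step that is not purely mechanical.
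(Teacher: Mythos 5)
Your proposal is correct and takes the same route as the paper, which gives no proof of its own but simply cites this as \cite[Corollary 6.1.3]{alon2008probabilistic} --- precisely the set-theoretic corollary of the Ahlswede--Daykin Four Functions Theorem obtained by taking all four functions to be constantly $1$. Your additional sketch of the Four Functions Theorem itself (Birkhoff embedding, induction on the Boolean cube, with the $n=1$ base case as the genuine content) is the standard argument and goes beyond what the paper supplies, though as you note the base case is only indicated rather than carried out.
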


\noindent We will use this combinatorial inequality to establish the following result about up-closed subsets of $Q_0$. 

\begin{lemma}\label{lmm:techn_flow}
     Let $U \subset \VS$ be an up-closed subset with complement $D := Q_0 \setminus U$. Then, for all  subsets $A \subset D$ we have       
     \[
       |A| \cdot |U| \leq |D| \cdot |U \cap A^+|
    \]
       where $|\bullet|$ denotes cardinality.
\end{lemma}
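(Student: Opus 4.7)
The plan is to apply the lattice inequality from Proposition \ref{prop:4funtheorem} directly, with the sets $X = A$ and $Y = U$, and then identify the relevant containments for the meet and join sets. The key observation is that $\VS = \Lambda(L)$ carries the coordinate-wise order (which agrees with the flow partial order on the grid quiver), so $(\VS,\wedge,\vee)$ with coordinate-wise $\min$ and $\max$ is a finite distributive lattice, putting us squarely in the setting of Proposition \ref{prop:4funtheorem}.

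First, I would record the elementary fact that $D = \VS \setminus U$ is \emph{down-closed}: if $y \leq x$ with $x \in D$ and $y \in U$, then up-closedness of $U$ would force $x \in U$, a contradiction. Applying Proposition \ref{prop:4funtheorem} then gives
\[
|A| \cdot |U| \;\leq\; |A \wedge U| \cdot |A \vee U|.
\]
So it suffices to show the two containments $A \wedge U \subset D$ and $A \vee U \subset U \cap A^+$, since these immediately yield $|A\wedge U|\leq |D|$ and $|A\vee U|\leq |U\cap A^+|$, and the lemma follows.

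For the first containment: any element of $A \wedge U$ has the form $a \wedge u$ with $a \in A \subset D$ and $u \in U$; since $a \wedge u \leq a \in D$ and $D$ is down-closed, we conclude $a \wedge u \in D$. For the second containment: any element of $A \vee U$ has the form $a \vee u$; since $a \vee u \geq u \in U$ and $U$ is up-closed, we get $a \vee u \in U$, while $a \vee u \geq a$ places $a \vee u$ in $a^+ \subset A^+$. Hence $A \vee U \subset U \cap A^+$.

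There is no real obstacle here—the whole proof is a bookkeeping exercise once one notices that the four-function inequality is the right hammer, and that up-closedness of $U$ conveniently makes $D$ down-closed (so meets stay in $D$) while joins automatically land in $U \cap A^+$. The only mild subtlety is verifying that the coordinate-wise lattice structure on $\Lambda(L)$ is distributive, which follows because distributivity of $\min$ and $\max$ holds in each $\{0,\ldots,\ell_i\}$ and distributive laws pass to products.
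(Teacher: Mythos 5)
Your proof is correct. Both you and the paper reduce the lemma to the same key tool, Proposition \ref{prop:4funtheorem} on the distributive lattice $(\VS,\wedge,\vee)$, but your instantiation is genuinely different and more direct. The paper first replaces $A$ by $A^+\cap D$ (using $(A^+\cap D)^+=A^+$), algebraically rewrites the target as $|A^+|\cdot|U|\leq|\VS|\cdot|U\cap A^+|$, and then applies the inequality to the two \emph{up-closed} sets $X=U$ and $Y=A^+$, exploiting the identity $U\vee A^+=U\cap A^+$ and the trivial bound $|U\wedge A^+|\leq|\VS|$. You instead apply the inequality directly to $X=A$ and $Y=U$ and finish with the two containments $A\wedge U\subset D$ (since $D$ is down-closed) and $A\vee U\subset U\cap A^+$ (since $U$ and $A^+$ are up-closed), both of which you verify correctly; no preliminary reduction or algebraic manipulation is needed. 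Your route is shorter and makes the role of up/down-closedness more transparent, while the paper's route has the minor aesthetic advantage of applying the lattice inequality to a pair of up-closed sets, where join literally equals intersection. Your closing remark on distributivity of the product of chains $\Lambda(L)$ is also the right justification, and matches what the paper asserts before stating Proposition \ref{prop:4funtheorem}.
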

\begin{proof}
Since, $(A^+\cap D)^+=A^+$, it suffices to establish the desired inequality with $A$ replaced by the larger set $A^+\cap D$. Since $A^+\cap D$ equals $ A^+\setminus(U\cap A^+)$, this inequality becomes
\[ 
\left(|A^+|-|U\cap A^+|\right)\cdot |U|\leqslant \left(|\VS|-|U|\right)\cdot |U\cap A^+|,
\]
which is equivalent to
\[ |A^+|\cdot |U|\leqslant |\VS|\cdot |U\cap A^+|.
\]
Since $U$ and $A^+$ are up-closed, we have $U\cap A^+=U\vee A^+$. Applying Proposition \ref{prop:4funtheorem} with subsets $X = U$ and $Y = A^+$ of $L = \VS$, we obtain
\[ |A^+|\cdot |U|\leq |U\wedge A^+|\cdot |U\vee A^+|\leq |\VS|\cdot |U\cap A^+|,\]
as desired.
\end{proof}

\vskip .1in
 The main tool in the proof of our generalisation of Corollary \ref{cor:comp_ord_pmod} to $\Rect(Q)$ is the following {\bf max-flow/min-cut theorem} \cite[Chapter III.1]{bollobas}. 

\begin{theorem}\label{thm:mfmc}
    Let $\Phi = (\sigma,\tau:\Phi_1 \to \Phi_0)$ be a quiver whose vertex set contains a distinguished source $s_* \not\in \tau(\Phi_1)$ and target $t_* \not\in \sigma(\Phi_1)$, and let $\kappa:\Phi_1 \to [0,\infty]$ be a function defined on edges. The maximum value attained by a $\kappa$-flow equals the minimum $\kappa$-capacity of a cut separating $s_*$ from $t_*$. 
\end{theorem}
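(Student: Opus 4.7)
The plan is to prove the two directions separately. The forward direction yields to a contrapositive using a two-vertex rectangle, while the reverse is the substantive part and will use the max-flow / min-cut machinery already assembled.

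For $(\Rightarrow)$: Suppose some edge $e \in \ES$ violates $\alpha \circ \sfn(e) > \alpha \circ \tfn(e)$. Writing $\sfn(e) = x_p$ and $\tfn(e) = x_q$, consider the two-element rectangle $R = \{p, q\} \subset \Lambda(L)$. If $\alpha(x_p) < \alpha(x_q)$, then $\mu_\alpha(\Int[\{q\}]) > \mu_\alpha(\Int[R])$, so $\Int[R]$ fails to be $\alpha$-semistable, and its HN filtration has successive quotients $\Int[\{q\}]$ and $\Int[\{p\}]$. These match the HN type (along $\alpha$) of the non-isomorphic module $\Int[\{p\}] \oplus \Int[\{q\}] \in \Rect(Q)$, contradicting completeness. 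If instead $\alpha(x_p) = \alpha(x_q)$, then the distinct rectangles $R$ and $\{q\}$ have equal $\alpha$-slopes, placing $\alpha$ inside $\mathcal H$ and contradicting the hypothesis.

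For $(\Leftarrow)$: Assume $\alpha \circ \sfn(e) > \alpha \circ \tfn(e)$ for every $e \in \ES$. By Proposition \ref{lmm:stab_to_complet_rect }, it suffices to prove that every rectangle representation $\Int[R]$ is $\alpha$-stable. Nonzero subrepresentations of $\Int[R]$ correspond bijectively to nonempty up-closed subsets $U \subseteq R$ (with respect to the flow order restricted to $R$), so the required stability reduces to the strict inequality
\[
|U| \cdot \sum_{d \in D} \alpha(x_d) \;>\; |D| \cdot \sum_{u \in U} \alpha(x_u)
\]
for every proper nonempty up-closed $U \subsetneq R$ with complement $D := R \setminus U$. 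To obtain this, I will apply Theorem \ref{thm:mfmc} to the network $\Phi$ on vertex set $\{s^*\} \sqcup R \sqcup \{t^*\}$, with arcs $s^* \to d$ of capacity $|U|$ for each $d \in D$, arcs $u \to t^*$ of capacity $|D|$ for each $u \in U$, and one infinite-capacity arc for each quiver edge internal to $R$. The source-capacity $|U| \cdot |D|$ upper-bounds the max-flow. For the matching lower bound, a finite-capacity cut is parameterised by $D_1 \subset D$ and $U_1 \subset U$ on the $s^*$-side, and the absence of a cut-crossing infinite arc forces $U_1 \supseteq D_1^+ \cap U$, so the cut capacity is at least $|U|(|D| - |D_1|) + |D| \cdot |D_1^+ \cap U|$. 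Applying Lemma \ref{lmm:techn_flow} inside the sub-quiver on $R$ with $A := D_1$ bounds this below by $|U| \cdot |D|$, so the max-flow equals $|U| \cdot |D|$ exactly.

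Given any maximum flow $f$, saturation of the source- and sink-arcs combined with flow conservation yields net internal out-flow equal to $|U|$ at each $d \in D$ and $-|D|$ at each $u \in U$. Telescoping $\alpha$ along the internal arcs then produces the identity
\[
\sum_{e \text{ internal}} f(e) \bigl(\alpha(\sfn(e)) - \alpha(\tfn(e))\bigr) \;=\; |U| \sum_{d \in D} \alpha(x_d) - |D| \sum_{u \in U} \alpha(x_u).
\]
Every summand on the left is non-negative by the edge hypothesis on $\alpha$, and since the total flow $|U| \cdot |D|$ is strictly positive, at least one internal arc must carry positive flow, making its summand strictly positive and delivering the desired strict inequality. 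The main obstacle is the min-cut estimate, for which Lemma \ref{lmm:techn_flow} is perfectly tailored: it converts the combinatorial force of up-closedness on a finite cut into exactly the bound required to match the trivial source-capacity upper estimate.
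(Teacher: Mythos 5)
Your proposal does not prove the statement under review. The statement is Theorem \ref{thm:mfmc} itself --- the max-flow/min-cut theorem for a network $\Phi$ with a distinguished source and sink and capacities valued in $[0,\infty]$. Your argument explicitly \emph{invokes} Theorem \ref{thm:mfmc} as a black box (``I will apply Theorem \ref{thm:mfmc} to the network $\Phi$\dots''), so as a proof of that theorem it is circular; nothing in it addresses why the maximum flow value equals the minimum cut capacity in general (e.g.\ via a Ford--Fulkerson augmenting-path argument or LP duality, together with the care needed when some capacities are $+\infty$). Note that the paper does not prove this theorem either: it is quoted from Bollob\'as's book, so there is no internal proof to compare against --- but a blind proof attempt for it would still have to supply the classical argument rather than cite the result.

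What you have actually written is a proof of the downstream application, Theorem \ref{thm:complete_param_rect_equiv} (equivalently Theorem \ref{thm:complete_param_rect_equiv1}), the characterisation of complete central charges $\alpha\notin\mathcal H$ on rectangle-decomposable modules. Read as such, it is essentially the paper's own argument: the same auxiliary network on $\set{s_*}\sqcup D\sqcup U\sqcup\set{t_*}$ (your capacities $|U|$ on source arcs and $|D|$ on sink arcs are the paper's $1/n$ and $1/m$ rescaled by $|U|\cdot|D|$), the same reduction of the min-cut bound to Lemma \ref{lmm:techn_flow}, and a telescoping identity along internal arcs in place of the paper's direct construction of the bilinear weight $F(d_i,u_j)$ --- a cosmetic difference. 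But that is not the statement you were asked to prove, and the gap is that the stated theorem is left entirely unproved.
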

\noindent Recall that a $\kappa$-flow on $\Phi$ is a map $f:\Phi_1 \to [0,\infty]$ satisfying two constraints:
\begin{enumerate}
 \item $f(\epsilon) \leq \kappa(\epsilon)$ for all $\epsilon \in \Phi_1$, and
 \item $\sum_{\sigma(\epsilon)=x} f(\epsilon) = \sum_{\tau(\epsilon)=x} f(\epsilon)$ for all $x \in \Phi_0 \setminus \set{s_*,t_*}$.
 \end{enumerate}
 The {\em value} of $f$ is the sum $\nu(f) := \sum_{\sigma(\epsilon) = s_*}f(\epsilon)$; by the second constraint above, $\nu(f)$ also equals $\sum_{\tau(\epsilon)=t_*}f(\epsilon)$. On the other hand, an $(s_*,t_*)$-cut is any subset $E \subset \Phi_1$ whose removal disconnects $s_*$ from $t_*$; the {\em $\kappa$-capacity} of such a cut is  $c(E) := \sum_{\epsilon \in E}\kappa(\epsilon)$. We are now able to characterise complete central charges for $Q$ which do not lie in the union of hyperplanes $\mathcal H$ from \eqref{eqn:hyperplanes}.

 \begin{theorem}\label{thm:complete_param_rect_equiv}
Let $Q$ be a grid quiver of shape $L = (\ell_1,\ell_2,\ldots,\ell_d)$ for any $\ell_i \geq 1$. A central charge $\alpha \notin \mathcal H$ is complete on $\Rep_\eq(Q)$ if and only if it satisfies the inequality $\alpha \circ \sfn(e) > \alpha \circ \tfn(e)$ for each edge $e \in \ES$.    
\end{theorem}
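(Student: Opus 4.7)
My plan is to prove the two directions separately, with sufficiency being substantially more involved. I interpret the theorem as a classification of complete central charges on the rectangle-decomposable subcategory $\Rect(Q)$, matching the scope of Proposition \ref{lmm:stab_to_complet_rect }. For the \emph{necessity} direction, assume $\alpha$ is complete on $\Rect(Q)$. Then by Lemma \ref{lemma:completenesscondition}, every indecomposable---in particular every rectangle representation $\Int[R]$---is $\alpha$-stable. Given any edge $e:x\to y$ of $Q$, I would pick the minimal rectangle $R := \{x,y\}$; the proper subrepresentation $\Int[\{y\}] \subsetneq \Int[R]$ has slope $\alpha(y)$, while $\mu_\alpha(\Int[R]) = (\alpha(x)+\alpha(y))/2$, and stability forces $\alpha(x) > \alpha(y)$ as required.

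For \emph{sufficiency}, by Proposition \ref{lmm:stab_to_complet_rect } it suffices to show that $\alpha\circ\sfn(e) > \alpha\circ\tfn(e)$ for every edge $e$ implies every $\Int[R]$ is $\alpha$-stable. Since all nontrivial maps inside $\Int[R]$ are identities, subrepresentations $W$ of $\Int[R]$ correspond bijectively to up-closed subsets $U \subset R$ (with respect to the flow order restricted to $R$), and the stability inequality $\mu_\alpha(W) < \mu_\alpha(\Int[R])$ rearranges to the averaging statement
\[
|U|\cdot \sum_{d\in D}\alpha(d) \;>\; |D|\cdot \sum_{u\in U}\alpha(u), \qquad D := R\setminus U \neq \emptyset.
\]
The main obstacle is deducing this global averaging inequality from the purely pointwise edge-decreasing hypothesis. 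My plan is to transport $\alpha$-mass from $D$ to $U$ along edges via the max-flow/min-cut theorem (Theorem \ref{thm:mfmc}): build a network $\Phi$ on $R\cup\{s_*,t_*\}$ with edges $s_*\to d$ of capacity $|U|$ for each $d\in D$, edges $u\to t_*$ of capacity $|D|$ for each $u\in U$, and each edge of $Q$ restricted to $R$ of infinite capacity.

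The capacities at $t_*$ bound the max-flow value by $|U|\cdot|D|$; the heart of the argument is showing this bound is attained. Any finite-capacity cut must leave the source side up-closed inside $R$, so it is parametrised by an up-closed $T\subset R$ and has capacity $(|D|-|D\cap T|)\cdot|U| + |U\cap T|\cdot|D|$. Verifying this is $\geq |U|\cdot|D|$ for every such $T$ reduces, after setting $A := D\setminus T$ and observing $T\supset A^+$, precisely to the inequality supplied by Lemma \ref{lmm:techn_flow}. Once max-flow equals $|U|\cdot|D|$, saturation forces each $s_*\to d$ and each $u\to t_*$ edge to carry its full capacity. Decomposing the flow into $|U|\cdot|D|$ unit paths $\pi_i: s_*\to d_i\to\cdots\to u_i\to t_*$, each internal segment has length $\geq 1$ (since $D\cap U=\emptyset$), so the edge-decreasing hypothesis yields $\alpha(d_i) > \alpha(u_i)$ strictly. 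Summing over $i$ and using that each $d\in D$ appears $|U|$ times and each $u\in U$ appears $|D|$ times among the endpoints recovers the desired averaging inequality, establishing $\alpha$-stability of $\Int[R]$ and completing the proof.
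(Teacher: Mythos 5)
Your proof follows essentially the same route as the paper's: reduce to $\alpha$-stability of rectangle modules via Proposition \ref{lmm:stab_to_complet_rect }, identify subrepresentations with up-closed subsets, and derive the averaging inequality from Theorem \ref{thm:mfmc} with the cut bound supplied by Lemma \ref{lmm:techn_flow}; your cosmetic variations (integer capacities $|U|,|D|$ in place of $1/n,1/m$, internal quiver edges instead of direct comparability edges $d_i\to u_j$, and an integral path decomposition instead of the paper's fractional weighting $F$) all work, and your interpretation of the scope as $\Rect(Q)$ matches the paper's actual proof. One slip to fix: in the cut analysis you should set $A := D\cap T$ (the part of $D$ on the \emph{source} side), not $A := D\setminus T$; only then is the observation $A^+\subset T$ true (since $T$ is up-closed and contains $A$), which gives $|U\cap T|\geq |U\cap A^+|$ and lets Lemma \ref{lmm:techn_flow} yield $|D\cap T|\cdot|U|\leq |D|\cdot|U\cap T|$ and hence the bound $c\geq |U|\cdot|D|$. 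With that correction the argument is complete.
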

 
\begin{proof}
   If the inequality $\alpha \circ \sfn(e) \leq \alpha \circ \tfn(e)$ holds for some edge $e$, then we obtain $\mu_\alpha(\Int[\set{t(e)}]) \leq \mu_{\alpha}(\Int[\set{s(e),t(e)}])$. We now have from Lemma \ref{lemma:completenesscondition} that the restriction of $\alpha$ to the $\mathbb{A}_2$ quiver $s(e) \to t(e)$ is not complete. It is straightforward  to confirm that as a consequence $\alpha$ is not complete on $\Rect(Q)$. The remainder of the argument will be devoted to the converse implication --- assuming that $\alpha \circ s(e) > \alpha \circ t(e)$ holds for all $e \in \ES$, we will show that $\alpha$ is complete.

    By Proposition \ref{lmm:stab_to_complet_rect }, it is enough to prove that each rectangle representation is $\alpha$-stable. By passing to the subquiver induced by vertices lying within any such rectangle, it suffices to assume that the rectangle representation at hand is $V=\Int[\Lambda(L)]$. Consider a subrepresentation $V' \subsetneq \Int[\Lambda(L)]$; this assigns vector spaces of dimension at most 1 with all nontrivial maps being isomorphisms. Thus, $V'$ has the form $\Int[U]$ for some up-closed proper subset $U \subsetneq \VS$. Let $\set{u_1,u_2,\ldots,u_m}$ be the vertices lying in $U$, and similarly let $\set{d_1,d_2,\ldots,d_n}$ be the vertices lying in its complement $D:= \VS\setminus U$. 
     
      Construct a new quiver $\Phi = (\sigma,\tau:\Phi_1 \to \Phi_0)$ as follows: its vertex set $\Phi_0$ consists of $Q_0$ along with two additional vertices $s_*$ and $t_*$. The edge set $\Phi_1$ is built in two stages as follows: first we insert a unique edge from $s_*$ to each vertex of $D$, and similarly a unique edge from each vertex of $U$ to $t_*$, as depicted below:
 
\begin{equation*}
\begin{array}{c}\begin{tikzpicture}[
roundnode/.style={circle,draw, inner sep=1.8pt},scale=0.75,thick,->,>=latex]
    \node (s) at (-2,2.5){$s_*$};

    \node[roundnode] (s1) at (2,1){$d_1$};
     \node[roundnode] (s2) at (2,2){$d_2$};
    \node (s3) at (2,3){$\vdots$};
    \node[roundnode] (s4) at (2,4){$d_{n}$};
   
     \draw (s) -- (s1);
    \draw (s)-- (s2);
    \draw (s)--(s3);
    \draw (s) -- (s4);

    \node (t) at (10,2.5){$t_*$};
        \node[roundnode] (t1) at (6,1){$u_1$};
     \node[roundnode] (t2) at (6,2){$u_2$};
    \node (t3) at (6,3){$\vdots$};
    \node[roundnode] (t4) at (6,4){$u_m$};

     \draw (t1) -- (t);
    \draw (t2)--(t);
    \draw (t3)--(t);
    \draw (t4) -- (t);
    
\end{tikzpicture}\end{array}\end{equation*}
In the second step, we add a unique edge $d_i \to u_j$ whenever $d_i \leq u_j$ holds in $\VS$. Define $\kappa:\Phi_1 \to [0,\infty]$ by:
\[
\kappa(\epsilon) = \begin{cases}
                      1/n & \text{if } \sigma(\epsilon) = s_* \\
                      1/m & \text{if } \tau(\epsilon) = t_* \\
                      +\infty & \text{otherwise}.
                    \end{cases}
\]

{\bf Claim: } Every $(s_*,t_*)$-cut $E \subset \Phi_1$ has $\kappa$-capacity $c(E) \geq 1$. \\
\noindent Given such a cut, let $S$ and $T$ denote the vertices lying in the component of $s_*$ and $t_*$ respectively after the edges of $E$ have been removed. We may safely assume that $E$ contains no edges of the form $d_i \to u_j$ since that would immediately force $c(E) = \infty$. Therefore, writing $A := S \cap D$ and $B := T \cap U$, we know that $A^+ \cap B$ is empty because the removal of $E$ must separate $s_*$ from $t_*$. As a result, we have
\[
c(E) = \frac{|D \setminus A|}{|D|} + \frac{|U \setminus B|}{|U|}.
\]
Using the fact that $U \setminus B$ contains $A^+ \cap U$ followed by Lemma \ref{lmm:techn_flow}, we have 
\[
\frac{|U \setminus B|}{|U|} \geq \frac{|U \cap A^+|}{|U|} \geq \frac{|A|}{|D|} = 1-\frac{|D\setminus A|}{|D|}.
\]
Using this bound in our expression for $c(E)$ given above establishes the claim.

Returning to the main argument, we have by Theorem \ref{thm:mfmc} that $\Phi$ admits a $\kappa$-flow $f:\Phi_1 \to [0,\infty]$ of value $\nu(f) \geq 1$. Select any such $f$ and note that it must evaluate to $1/n$ on each edge $s_* \to d_i$ and to $1/m$ on each edge $u_j \to t_*$, whence its value $\nu(f)$ is exactly $1$. Define $F:D \times U \to \bbR_{\geq 0}$ by 
\[
F(d_i,u_j) = \begin{cases}
                f(d_i \to u_j) & \text{if } d_i \leq u_j \text{ holds in } \VS \\
                0 & \text{ otherwise.}
         \end{cases}
\]
Since $\alpha \circ s(e) > \alpha \circ t(e)$ holds for each edge $e \in \ES$, and since $F$ takes strictly positive values on at least some $(d_i,u_j)$ pairs, we have 
\[
\sum_{i=1}^n \sum_{j=1}^m F(d_i,u_j) \cdot \alpha(d_i) > \sum_{i=1}^n \sum_{j=1}^m F(d_i,u_j) \cdot \alpha(u_j).
\] 
Using the fact that $f$ is a $\kappa$-flow, for each $i$ we have $\sum_{j=1}^mF(d_i,u_j) = 1/n$, and similarly for each $j$ we have $\sum_{i=1}^nF(d_i,u_j) = 1/m$. Thus, the inequality above is $\phi_\alpha(\Int[D]) > \phi_\alpha(\Int[U])$. Finally, the desired inequality $\phi_\alpha(V') < \phi_\alpha(V)$ follows from Lemma \ref{lemma:sumstability} applied to the short exact sequence $0\to V'\to V\to \Int[D]\to 0$.
\end{proof}

\section{HN types of nestfree ladder persistence modules} \label{sec:ladders}

Fix an integer $\ell \geq 1$. Here we will be concerned with certain equalised representations of the following {\em ladder} quiver $Q = (\sfn,\tfn:\ES \to \VS)$ of length $\ell$:
\[
\xymatrixcolsep{.8in}
\xymatrixrowsep{.6in}
\xymatrix{
x_0^+ \ar@{->}[r]^{e_1^+} \ar@{->}[d]_{e_0} & x_1^+ \ar@{->}[r]^{e_2^+} \ar@{->}[d]_{e_1} & \cdots \ar@{->}[r]^{e_{\ell-1}^+}  & x_{\ell-1}^+ \ar@{->}[r]^{e_{\ell}^+} \ar@{->}[d]_{e_{\ell-1}}& x_\ell^+ \ar@{->}[d]_{e_\ell} \\
x_0^- \ar@{->}[r]_{e_1^-} & x_1^- \ar@{->}[r]_{e_2^-} & \cdots \ar@{->}[r]_{e^-_{\ell-1}} & x_{\ell-1}^- \ar@{->}[r]_{e^-_\ell} & x_\ell^-
}
\]
Equalised representations of such quivers are sometimes called {\em ladder persistence modules}; these are precisely 2-parameter persistence modules of shape $L = (\ell,1)$, but it is customary to represent them with vertical arrows pointing down instead of up. In particular, they arise when studying the morphisms of ordinary persistence modules. The authors of \cite{escolar2016persistence} established that $\Rep_\eq(Q)$ is of finite type for $\ell \leq 3$ and completely classified its indecomposable objects via Auslander-Reiten theory. In contrast, the last three authors study in \cite[Section 5]{barcodebases} a full subcategory of $\Rep_\eq(Q)$ which turns out to be representation finite regardless of $\ell$. These are the nest-free representations.

\subsection{Nestfree representations}

 Given $V \in \Rep_\eq(Q)$, we let $V^+$ and $V^-$ denote its restrictions to the top and bottom rows of $Q$ respectively. Since both $V^\pm$ are representations of the (equioriented) $\mathbb{A}_\ell$ quiver, they admit direct sum decompositions into interval modules as described in \eqref{equ:gabriel}. We say that such an $\mathbb{A}_\ell$ representation $W$ admits a pair of {\em strictly nested intervals} whenever there exist $[a,b]$ and $[c,d]$ in $\Barc(W)$ satisfying $a < c \leq d < b$ --- in other words, the interval $[c,d]$ lies within the interior of the interval $[a,b]$.

\begin{definition}
A representation $V \in \Rep_\eq(Q)$ is said to be {\bf nestfree} if neither $V^+$ nor $V^-$ admits a pair of strictly nested intervals. 
\end{definition}

We will examine the full subcategory of $\Rep_\eq(Q)$ spanned by nestfree representations, which is denoted $\Rep_{\nf}(Q)$ henceforth. Its indecomposable objects were completely classified in \cite[Theorem 5.3]{barcodebases}, which we paraphrase below.

\begin{theorem} \label{thm:ladderindecs}
Up to isomorphism, the indecomposable objects of $\Rep_\nf(Q)$ have one of three possible forms:
\begin{enumerate}
    \item Given subintervals $[a,b]$ and $[c,d]$ of $[0,\ell]$ whose endpoints satisfy $c \leq a \leq d \leq b$,  let $\bR^{a,b}_{c,d}$ be the representation $V$ for which $V^+$ is the interval module $\Int[a,b]$ while $V^-$ is the interval module $\Int[c,d]$, and all vertical maps are $1'$s whenever possible and $0$ otherwise:
    \[
    \xymatrixcolsep{.35in}
    \xymatrixrowsep{.25in}
    \xymatrix{
    & & \K  \ar@{->}[r] \ar@{->}[d]  & \K  \ar@{->}[r] \ar@{->}[d]  & \cdots \ar@{->}[r] & \K   \ar@{->}[r] \ar@{->}[d] & \cdots \ar@{->}[r]& \K  \\
    \K   \ar@{->}[r] & \cdots \ar@{->}[r]& \K  \ar@{->}[r]  & \K  \ar@{->}[r]  & \cdots \ar@{->}[r]& \K  
    }
\]    
  \item Given an interval $[a,b] \subset [0,\ell]$, let $\bR^{a,b}$ be the ladder representation $V$ for which $V^+$ equals $\Int[a,b]$ and $V^-$ is trivial, with all vertical maps necessarily being $0$:
  \[
\xymatrixcolsep{.35in}
\xymatrixrowsep{.25in}
\xymatrix{
 & & \K  \ar@{->}[r] \ar@{->}[d]  & \K  \ar@{->}[r] \ar@{->}[d] & \cdots \ar@{->}[r]  & \K  \ar@{->}[d]\\
0  \ar@{->}[r] & \cdots \ar@{->}[r]& 0 \ar@{->}[r]  & 0 \ar@{->}[r]  & \cdots \ar@{->}[r]& 0 \ar@{->}[r] & \cdots \ar@{->}[r]& 0    
}
\]
\item Finally, given an interval $[c,d] \subset [0,\ell]$, let $\bR_{c,d}$ be representation $V$ for which $V^+$ is trivial while $V^-$ is $\Int[c,d]$, so once again all vertical maps are $0$:
\[
\xymatrixcolsep{.35in}
\xymatrixrowsep{.25in}
\xymatrix{
0  \ar@{->}[r] & \cdots \ar@{->}[r]& 0 \ar@{->}[r] \ar@{->}[d] & 0 \ar@{->}[r] \ar@{->}[d]  & \cdots \ar@{->}[r] & 0 \ar@{->}[r] \ar@{->}[d]& \cdots \ar@{->}[r]& 0 \\
& & \K  \ar@{->}[r]   & \K  \ar@{->}[r]  & \cdots \ar@{->}[r]  &  \K 
}
\]
\end{enumerate}
\end{theorem}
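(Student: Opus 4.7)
My approach splits into two parts: first, verify that each of the three listed families is indecomposable; second, show conversely that any indecomposable object of $\Rep_\nf(Q)$ must be of one of these forms.

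For the first part, the representations $\bR^{a,b}$ and $\bR_{c,d}$ are supported entirely in a single row with zero vertical maps; they restrict to indecomposable $\typeA_\ell$ interval modules, and since no automorphism can introduce mixing with the trivial opposite row, they remain indecomposable as ladder representations. For $\bR^{a,b}_{c,d}$ I would compute the endomorphism ring directly: any endomorphism $\phi$ restricts to endomorphisms of $V^\pm$, each a brick isomorphic to an interval module of an equioriented $\typeA_\ell$, so $\phi|_{V^\pm}$ is a scalar $\lambda^\pm \in \K$. Commutativity of the vertical squares on the overlap $[a,d]$, where the vertical maps equal the identity, forces $\lambda^+ = \lambda^-$, giving $\operatorname{End}(\bR^{a,b}_{c,d}) \cong \K$ and hence indecomposability.

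For the classification, let $V \in \Rep_\nf(Q)$ be indecomposable. The case $V^- = 0$ reduces $V$ to a single interval in the top row, giving $V \cong \bR^{a,b}$; the case $V^+ = 0$ is symmetric and yields type $\bR_{c,d}$. In the remaining case where both rows are nonzero, apply Gabriel's theorem to write
\[
V^+ \cong \bigoplus_i \Int[a_i, b_i], \qquad V^- \cong \bigoplus_j \Int[c_j, d_j],
\]
and use the nestfree hypothesis to sort each family into a staircase in which both endpoints are non-decreasing.

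The equalised condition imposes two key constraints on the vertical maps $V_{e_k}:V^+_{x_k}\to V^-_{x_k}$. First, the matrix entry of $V_{e_k}$ connecting a top interval $[a,b]$ to a bottom interval $[c,d]$ is constant in $k$ throughout the overlap, since any two vertical edges within the overlap are connected by commuting rectangles. Second, this entry must vanish unless $c \leq a$ and $d \leq b$: otherwise one can construct two paths between a fixed source-target pair differing by a trivial intermediate space, yielding a contradiction with the equalised axiom. Using these constraints together with the additional $\operatorname{Hom}$ spaces between staircase intervals afforded by the nestfree structure, I would perform basis changes on $V^\pm$ preserving the interval decompositions to reduce each vertical matrix to a canonical partial-permutation form in which every top interval is matched with at most one bottom interval (and vice versa). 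The staircase structure forces this matching to be order-preserving, which is exactly what makes the basis changes at different vertices mutually compatible.

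After this canonicalisation, $V$ decomposes as a direct sum indexed by the matching: each matched pair $([a,b],[c,d])$ yields a summand isomorphic to $\bR^{a,b}_{c,d}$ (where the equalised constraint furnishes $c \leq a \leq d \leq b$), while unmatched top and bottom intervals yield summands $\bR^{a,b}$ and $\bR_{c,d}$ respectively. Since $V$ is indecomposable the decomposition reduces to a single summand, completing the classification. The main obstacle is the simultaneous diagonalisation step: one must choose basis changes at each vertex producing the canonical form without breaking the decomposition elsewhere. This is possible precisely because the staircase structure forces a non-crossing matching, and because the equalised condition restricts the compatible matchings to the ``shift-right'' pattern captured by $c \leq a \leq d \leq b$.
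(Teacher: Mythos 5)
The paper does not actually prove this theorem: it is paraphrased from \cite[Theorem 5.3]{barcodebases}, so your argument can only be judged on its own merits. The easy half of your proposal is sound. The endomorphism-ring computation for $\bR^{a,b}_{c,d}$ is correct (the overlap $[a,d]$ is nonempty because $a\le d$, so the identity vertical map there forces $\lambda^+=\lambda^-$), and the two constraints you extract from the equalised condition --- constancy of each matrix entry of the vertical map across the overlap of a top and a bottom interval, and vanishing of that entry unless $c\le a$ and $d\le b$ --- are both correct and correctly derived.

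The gap is in the classification half. Reducing the vertical map to a partial-permutation matrix by automorphisms of $V^+$ and $V^-$ preserving the interval decompositions is not a routine ``canonicalisation'': it is the entire content of the theorem, and you assert it rather than prove it. Concretely, the elementary row and column operations at your disposal are governed by the Hom-spaces $\operatorname{Hom}(\Int[a,b],\Int[a',b'])$ between interval summands within each row (nonzero, in the equioriented case, iff $a'\le a\le b'\le b$), and one must verify that for a nestfree staircase these operations suffice to clear every admissible support pattern of the matrix down to a partial permutation --- for instance, that no $2\times 2$ block of admissible pairs can survive with entries the allowed operations cannot eliminate. This is precisely where representation-finiteness fails for general (nested) ladders of length $\ge 4$, so the assertion that ``the staircase structure forces a non-crossing matching'' cannot be taken for granted; it is itself the combinatorial heart of the argument carried out in \cite{barcodebases}. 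Until that step is executed, your proposal is an outline of the correct strategy rather than a proof.
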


It will be convenient in the sequel to unify notation by adopting the convention 
\begin{equation}\label{eqn:convs_ladder}
    \bR^{a,b}_{\infty,\infty}:= \bR^{a,b}\qquad \text{ and }\qquad\bR^{\infty,\infty}_{c,d}:=\bR_{c,d},
\end{equation}
so that for every $V \in \Rep_\nf(Q)$ there exist multiplicities $r_{c,d}^{a,b}\in \mathbb{N}$ satisfying:
\begin{align} \label{equ:ladderdecomp}
 V  \simeq  \bigoplus_{a,b,c,d}  \left(\bR_{c,d}^{a,b}\right)^{r^{a,b}_{c,d}},
\end{align} 
with $(a,b,c,d)$ ranging over admissible subsets of $\set{0,1,\ldots,\ell,\infty}^4$ as per \eqref{eqn:convs_ladder}. Our goal throughout the remainder of this section is to quantify the extent to which these multiplicities can be recovered from HN types. The first result in this direction is negative: for $\ell \geq 4$, there is no complete central charge $\alpha:Q_0 \to \bbR$.

\begin{prop} \label{prop:laddercounterexample}
If $Q$ has length $\ell \geq 4$, then for every central charge $\alpha:Q_0 \to \bbR$ there exists at least one indecomposable in $\Rep_\nf(Q)$ which is not $\alpha$-stable. 
\end{prop}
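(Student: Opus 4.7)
The plan is to argue by contradiction: assume a central charge $\alpha\colon \VS\to\bbR$ exists under which every indecomposable of $\Rep_\nf(Q)$ is $\alpha$-stable, and derive a contradiction from the assumption $\ell\ge 4$. By Lemma~\ref{lemma:completenesscondition}, such $\alpha$-stability is exactly the condition that would be implied by completeness, so ruling it out is what is required.

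First, I would reduce the problem to the mixed indecomposables. The row-supported indecomposables $\bR^{a,b}$ and $\bR_{c,d}$ in parts (2) and (3) of Theorem~\ref{thm:ladderindecs} have all vertical maps zero, so their subobject lattices inside $\Rep(Q)$ coincide with those of the corresponding interval modules in $\Rep(\typeA_{\ell+1})$ on each row. Applying Corollary~\ref{cor:comp_ord_pmod} to each row separately therefore forces the strict monotonicity
\[
\alpha(x_0^+) > \alpha(x_1^+) > \cdots > \alpha(x_\ell^+), \qquad \alpha(x_0^-) > \alpha(x_1^-) > \cdots > \alpha(x_\ell^-).
\]

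Second, I would determine the subobject lattice of each mixed indecomposable $V = \bR^{a,b}_{c,d}$ (with $c\le a\le d\le b$). A subrepresentation $W\subseteq V$ has top support $[a',b]$ and bottom support $[c',d]$ (each up-closed in the corresponding row support), subject to the vertical-compatibility condition that $c'\le a'$ whenever $a'\le d$. Invoking Theorem~\ref{thm:ladderindecs} then produces several natural short exact sequences in $\Rep_\nf(Q)$, including
\begin{gather*}
0 \to \bR_{c,d} \to V \to \bR^{a,b} \to 0,\\
0 \to \bR^{a+1,b}_{c,d} \to V \to \bR^{a,a} \to 0 \qquad (a+1\le d),\\
0 \to \bR^{a,b}_{c+1,d} \to V \to \bR_{c,c} \to 0 \qquad (c<a).
\end{gather*}
Combined with Lemma~\ref{lemma:sumstability}, each of these converts $\alpha$-stability of $V$ into a strict slope comparison among its three constituents.

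Third, I would combine such inequalities, specialised to a carefully chosen family of parameter tuples $(a,b,c,d)$, into a cyclic chain whose dimension-weighted sum cancels the intermediate terms and leaves a strict inequality among the monotone quantities from the first step that contradicts it. The cycle requires parameters spanning at least four distinct positions of $[0,\ell]$, which is precisely why $\ell\ge 4$ is needed. \emph{The main obstacle} is pinpointing the right minimal cycle: the slope inequalities attached to different mixed indecomposables have different denominators, so they must be combined with carefully chosen rational weights to eliminate all auxiliary $\alpha$-values and expose the contradiction; many candidate cycles exist and identifying an inconsistent one is the essential combinatorial content of the proof.
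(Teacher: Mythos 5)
Your outline correctly identifies the strategy that the paper itself uses --- derive strict slope inequalities from inclusions (or quotients) of indecomposables via Lemma \ref{lemma:sumstability}, then combine them with positive weights into an inconsistent system --- and your preliminary reductions (row monotonicity from the purely top- or bottom-supported indecomposables, and the description of the subobject lattice of $\bR^{a,b}_{c,d}$) are correct. However, the proof is not complete, because the one step that actually carries the content of the proposition is exactly the step you defer: you never exhibit a specific inconsistent cycle of inequalities. This is not a routine verification that can be waved through. For $\ell\leq 3$ the very same strategy produces \emph{no} contradiction (indeed the stability system is consistent there), so the existence of an inconsistent positive combination for $\ell\geq 4$ is precisely what must be demonstrated, and nothing in your argument certifies that one exists. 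As written, the ``proof'' establishes only that \emph{if} such a cycle exists then the proposition holds.

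For comparison, the paper resolves this by reducing to $\ell=4$ (which embeds in every longer ladder) and writing down five explicit inclusions, e.g.\ $\bR^{1,1}_{1,1}\subset\bR^{1,1}_{0,1}$, $\bR_{0,1}\subset\bR^{1,4}_{0,1}$, $\bR^{3,4}_{3,3}\subset\bR^{3,4}_{2,3}$, $\bR^{2,3}_{2,2}\subset\bR^{2,3}_{1,2}$, $\bR^{4,4}\subset\bR^{3,4}_{2,3}$, whose associated slope inequalities, after clearing denominators, admit an explicit positive integer combination ($4,4,1,4,1$) summing to the contradiction $0<0$. Note also that the paper does not need (and does not use) your first-step monotonicity reduction: the row-supported indecomposables enter only as subobjects of mixed ones in two of the five inequalities. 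To repair your argument you must either reproduce such an explicit certificate of infeasibility or give a genuine structural reason (e.g.\ a Farkas-type argument on the cone of stability conditions) why the system becomes infeasible exactly when $\ell\geq 4$; at present the claim ``the cycle requires parameters spanning at least four distinct positions, which is precisely why $\ell\geq 4$ is needed'' is an assertion, not an argument.
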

\begin{proof} 
It suffices to consider $\ell = 4$ which embeds into all larger ladder quivers:
\[
\xymatrixcolsep{.5in}
\xymatrixrowsep{.4in}
\xymatrix{ 
 x_0^+ \ar@{->}[r]^{} \ar@{->}[d]_{} & x_1^+ \ar@{->}[r]^{} \ar@{->}[d]_{} & x_2^+ \ar@{->}[r]^{} \ar@{->}[d]_{} & x_3^+ \ar@{->}[r]^{} \ar@{->}[d]_{}& x_4^+ \ar@{->}[d]_{} \\
x_0^- \ar@{->}[r] & x_1^- \ar@{->}[r]_{} & x_2^- \ar@{->}[r]_{} & x_3^- \ar@{->}[r]_{} & x_4^- 
}
\] Let $\alpha^\pm_j$ be the valued assigned by a given central charge $\alpha$ to the vertex $x^\pm_j$ for $0 \leq j \leq 4$. Assume, for the purposes of contradiction, that all the indecomposables in $\Rep_\nf(Q)$ are $\alpha$-stable. Consider the inequalities of slopes arising from the following inclusions of indecomposables: 
\begin{align*}
3(\alpha_1^- +\alpha_1^+) &<  2(\alpha_0^-+\alpha_1^-+\alpha_1^+) & \bR^{1,1}_{1,1} \subset \bR^{1,1}_{0,1} \\
3(\alpha_0^- +\alpha_1^-) &< (\alpha_0^- +\alpha_1^-+\alpha_1^+ +\alpha_2^++\alpha_3^+ + \alpha_4^+) & \bR_{0,1} \subset \bR^{1,4}_{0,1} \\
 4(\alpha_3^- +\alpha_3^+ + \alpha_4^+) &<  3(\alpha_2^- +\alpha_3^- +\alpha_3^+ + \alpha_4^+) & \bR^{3,4}_{3,3} \subset \bR^{3,4}_{2,3} \\
 4(\alpha_2^- +\alpha_2^+ + \alpha_3^+) &< 3(\alpha_1^- +\alpha_2^- +\alpha_2^+ + \alpha_3^+) & \bR^{2,3}_{2,2} \subset \bR^{2,3}_{1,2} \\
 4\alpha_4^+ &< (\alpha_2^- +\alpha_3^-+\alpha_3^+ +\alpha_4^+) & \bR^{4,4} \subset \bR^{3,4}_{2,3}
\end{align*}
Labelling these five inequalities as $(a), (b), \ldots, (e)$ respectively, we obtain
\[
4 (a)+4(b)+(c)+4(d)+ (e) \text{ holds if and only if } 0 < 0,
\]
which provides the desired contradiction.
\end{proof}

Note that Lemma  \ref{lemma:completenesscondition} also holds with $\Rep(Q)$ replaced by $\Rep_\nf(Q)$ since all the representations involved in its proof remain nestfree for indecomposable $I$. When combined with this Lemma, the calculation in Proposition \ref{prop:laddercounterexample} yields the following consequence.
\begin{cor}
    For $\ell \geq 4$, there is no central charge $\alpha:Q_0 \to \bbR$ that is complete on $\Rep_\nf(Q)$.
\end{cor}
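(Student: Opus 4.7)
The plan is a direct contrapositive argument that combines the nestfree analogue of Lemma~\ref{lemma:completenesscondition} (noted in the sentence immediately preceding the Corollary) with Proposition~\ref{prop:laddercounterexample}. Explicitly: suppose, toward a contradiction, that some $\alpha:Q_0 \to \bbR$ is complete on $\Rep_\nf(Q)$. The nestfree version of Lemma~\ref{lemma:completenesscondition} forces every indecomposable in $\Rep_\nf(Q)$ to be $\alpha$-stable. But Proposition~\ref{prop:laddercounterexample} produces, for every central charge when $\ell \geq 4$, at least one indecomposable in $\Rep_\nf(Q)$ which is not $\alpha$-stable. This contradiction proves the Corollary.

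The only point requiring care is the transfer of Lemma~\ref{lemma:completenesscondition} from $\Rep(Q)$ to $\Rep_\nf(Q)$. Inspecting the proof of that Lemma, one sees that the only constructions used are $I^1 \oplus (I/I^1)$ (when $I$ fails $\alpha$-semistability) and $J \oplus (I/J)$ (when $I$ is $\alpha$-semistable but not $\alpha$-stable) for suitable subrepresentations of the indecomposable $I$; in each case, $\HNtype{\alpha}{\bullet}$ agrees with that of $I$. To import this argument, I need to verify that when $I$ is one of the three families $\bR^{a,b}_{c,d}$, $\bR^{a,b}$, $\bR_{c,d}$ listed in Theorem~\ref{thm:ladderindecs}, any subrepresentation $V' \subset I$ and its quotient $I/V'$ again lie in $\Rep_\nf(Q)$. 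This is a finite check: the top and bottom rows of $I$ are interval modules of $\mathbb{A}_\ell$, and the vertical maps are either identities on the overlap or zeros, so the rows of $V'$ and of $I/V'$ decompose into intervals that are never strictly nested.

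The main obstacle is really just this nestfree-preservation verification; the quantitative heart of the argument has already been absorbed into Proposition~\ref{prop:laddercounterexample}, so no additional inequality juggling is required.
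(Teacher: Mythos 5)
Your argument is exactly the paper's: the authors likewise observe that Lemma~\ref{lemma:completenesscondition} transfers to $\Rep_\nf(Q)$ because the representations $I^1 \oplus (I/I^1)$ and $J \oplus (I/J)$ constructed in its proof remain nestfree when $I$ is a nestfree indecomposable, and then combine this with Proposition~\ref{prop:laddercounterexample}. Your additional verification that subrepresentations and quotients of the indecomposables $\bR^{a,b}_{c,d}$ have rows decomposing into non-nested intervals is correct and fills in the detail the paper leaves implicit.
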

\noindent Before remedying this defect by considering a larger collection of central charges, we describe another negative result which highlights the necessity of restricting our focus to nestfree representations.

\begin{prop} Let $\K$ be any field other than $\bbZ/2$, and consider a ladder quiver $Q$ of length $\ell \geq 4$. There exist non-isomorphic representations $V \not \simeq W$ in $\Rep_\eq(Q)$ such that $\HNtype{\alpha}V=\HNtype{\alpha}W$ for every central charge $\alpha:Q_0 \to \bbR$.
\end{prop}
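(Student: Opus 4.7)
The plan is to exhibit, for every $\ell \geq 4$, a one-parameter family $\{M_\lambda : \lambda \in \K^*\}$ of pairwise non-isomorphic indecomposable equalised representations of the ladder of length $\ell$, whose sub-representation lattices are canonically isomorphic (preserving dimension vectors). Selecting any two distinct parameters $\lambda \neq \mu$ in $\K^*$, I would set $V := M_\lambda$ and $W := M_\mu$. The hypothesis $\K \neq \bbZ/2$ is used precisely to ensure $|\K^*| \geq 2$, so that two such parameters can be chosen. The existence of such a family is guaranteed by the fact (established in \cite{wildladder}) that $\Rep_\eq(Q)$ fails to be of finite representation type for $\ell \geq 4$, so continuous moduli of indecomposables must be present.

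First I would construct the family explicitly for $\ell = 4$; for larger $\ell$, the length-$4$ example embeds into the longer ladder by adjoining zero representations at the extra vertices, preserving both non-isomorphism and the equality of HN types. A candidate construction uses a ``twisted gluing'': fix an indecomposable combinatorial skeleton supported on an appropriate convex sub-region of the ladder, and let $\lambda \in \K^*$ parameterise a single non-trivial gluing scalar, in the style of band modules over tame string algebras. Distinct values of $\lambda$ then produce pairwise non-isomorphic indecomposables sharing a common dimension vector.

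Next I would verify the key structural property: there is a lattice isomorphism between the sub-representations of $M_\lambda$ and those of $M_\mu$ preserving dimension vectors and the operations of sum and intersection. The point is that sub-representations are specified by compatible choices of sub-spaces at each vertex, and the $\lambda$-twist merely reparameterises the identifications among these sub-spaces without altering which dimension-vector configurations can occur. Given such an isomorphism, I would deduce $\HNtype{\alpha}{V} = \HNtype{\alpha}{W}$ for every central charge $\alpha$ by induction on the length of the HN filtration: the first step $V^1 \subseteq V$ is the maximal sub-representation of maximal $\alpha$-slope, and its dimension vector depends only on the sub-representation lattice together with $\mu_\alpha$, so $\udim_{V^1} = \udim_{W^1}$. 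The quotients $V/V^1$ and $W/W^1$ inherit the parameter-invariant lattice structure (sub-representations of the quotient correspond to sub-representations of the original containing the first HN step), so the induction continues through the entire filtration.

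The main obstacle lies in step one: providing the explicit construction of the family $\{M_\lambda\}$ and rigorously verifying the sub-representation lattice isomorphism, in particular at the level of successive HN quotients. This requires a careful combinatorial analysis of the indecomposable equalised representations of the ladder of length $4$, extending beyond the finite-type regime classified in \cite{escolar2016persistence}.
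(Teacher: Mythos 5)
Your strategy is the same as the paper's: construct a one-parameter family of pairwise non-isomorphic equalised representations whose subrepresentation posets correspond bijectively with dimension vectors preserved, pick two distinct nonzero parameters (this is exactly where $\K \neq \bbZ/2$ enters), and conclude that all HN types agree. The closing step of your argument — deducing $\HNtype{\alpha}{V}=\HNtype{\alpha}{W}$ from the dimension-vector-preserving correspondence of subrepresentations, by induction along the HN filtration — is sound and in fact slightly more detailed than the paper's one-line deduction.

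The gap is that you never produce the family, and the step you explicitly defer (``providing the explicit construction of the family $\{M_\lambda\}$ and rigorously verifying the sub-representation lattice isomorphism'') is the entire mathematical content of the proposition. Two points make this more than a routine omission. First, the appeal to infinite representation type of $\Rep_\eq(Q)$ for $\ell\geq 4$ only guarantees families of pairwise non-isomorphic indecomposables sharing a dimension vector; it does not guarantee that any such family has matching subrepresentation lattices, which is the property you actually need, so ``continuous moduli must be present'' does not discharge the obligation. Second, even for a concrete candidate the non-isomorphism claim requires an argument: the paper takes $V(\lambda)$ on the length-$4$ ladder with rows $\Int[1,4]\oplus\Int[2,3]$ and $\Int[0,3]\oplus\Int[1,2]$ and a single twisted vertical map $\smat{1&\lambda\\1&1}$, and proves $V(\lambda)\not\simeq V(\mu)$ by invoking the rigidity of barcode bases (\cite[Theorem 4.4]{barcodebases}) to force any isomorphism to act as the identity on each row, then checking that the identity fails to commute with the middle vertical map. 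Note also that the paper's example is deliberately \emph{not} nestfree (both rows have strictly nested bars), which is what makes such a moduli of twists possible; your proposed ``band-module-style'' construction would need to land in a similarly non-rigid region of the ladder. As written, the proposal is a correct plan rather than a proof.
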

\begin{proof}
    
For each scalar $\lambda$ in $\K$, consider $V(\lambda) \in \Rep_\eq(Q)$ given by
    \[
\xymatrixcolsep{.8in}
\xymatrixrowsep{.6in}
\xymatrix{
0 \ar@{->}[r] \ar@{->}[d] & \K  \ar@{->}[r]^-{\smat{1 \\ 0}} \ar@{->}[d]_-{\smat{1 \\ 1}}  & \K^2  \ar@{->}[r]^-{\smat{
	1 & 0\\ 0 & 1}} \ar@{->}[d]_-{\smat{1 & \lambda \\ 1 & 1}} & \K^2 \ar@{->}[d]_-{\smat{
	1 & \lambda}} \ar@{->}[r]^-{\smat{1 & 0}}  & \K \ar@{->}[d]\\
\K  \ar@{->}[r]_-{\smat{1\\0}} & \K^2 \ar@{->}[r]_-{\smat{1 & 0\\0 & 1}}& \K^2 \ar@{->}[r]_-{\smat{1 & 0}}  &  \K \ar@{->}[r] & 0 
}
\]
(See also \cite[Definition 4(2)]{wildladder}.) We note that regardless of the chosen $\lambda$, the upper row $V(\lambda)^+$ is $\Int[1,4] \oplus \Int[2,3]$ whereas the lower row $V(\lambda)^-$ is $\Int[0,3] \oplus \Int[1,2]$, so both admit nested intervals in their barcodes. Since we have assumed $\K \neq \bbZ/2$, there exist distinct nonzero scalars $\lambda \neq \mu$ in $\K$; we set $V := V(\lambda)$ and $W := V(\mu)$. Any isomorphism $\phi:V \to W$ must restrict to automorphisms $\phi^\pm:V^\pm \to W^\pm$ of the top and bottom rows. By \cite[Theorem 4.4]{barcodebases}, both $\phi^\pm$ are forced to be trivial in the chosen bases, i.e., represented by the identity matrix on each vertex. It is readily checked (along the middle vertical edge) that this collection of identity matrices does not constitute a morphism $V \to W$ in $\Rep(Q)$, whence $V$ and $W$ are non-isomorphic in the subcategory $\Rep_\eq(Q)$. On the other hand, there is an evident bijection between the subrepresentations of $V$ and those of $W$ that preserves dimension vectors, and hence, $\alpha$-semistability for any choice of central charge $\alpha:Q_0 \to \bbR$. This yields $\HNtype{\alpha}{V} = \HNtype{\alpha}{W}$, as claimed.
\end{proof}

\subsection{A complete set of central charges}

Fix a ladder quiver $Q$ of length $\ell \geq 1$. Our goal in this subsection is to prove the following result.

\begin{theorem} \label{thm:mainladder}
There exists a finite collection of central charges $A$ for which the HN type $\HNtype{A}{\bullet}$ is complete on $\Rep_\nf(Q)$.
\end{theorem}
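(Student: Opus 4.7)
By Theorem \ref{thm:ladderindecs}, the subcategory $\Rep_\nf(Q)$ is representation-finite, so every $V \in \Rep_\nf(Q)$ decomposes as in \eqref{equ:ladderdecomp} with multiplicities $\{r^{a,b}_{c,d}\}$ indexed by admissible tuples. The theorem is then equivalent to recovering these (finitely many) multiplicities from the collection $\HNtype{A}{V}$. By Proposition \ref{prop:slopesum}, for any central charge $\alpha$,
\[
\HNtype{\alpha}{V} = \sum_{a,b,c,d} r^{a,b}_{c,d}\,\HNtype{\alpha}{\bR^{a,b}_{c,d}},
\]
so the plan is to choose a finite $A$ for which the resulting linear system in the $r^{a,b}_{c,d}$ is invertible.

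The first step is to include every skyscraper central charge $\delta_{x^\pm_i}$ in $A$. By Theorem \ref{thm:rankfromHN}, this information determines the rank invariant $\rho_V$ on all of $\Rep_\eq(Q)$, and hence on $\Rep_\nf(Q)$. Since $V^+$ and $V^-$ are nestfree ordinary persistence modules, their barcodes are determined by $\rho_V$ restricted to top and bottom vertices, yielding the marginal multiplicities $s^{a,b} := \sum_{c,d} r^{a,b}_{c,d}$ and $t^{c,d} := \sum_{a,b} r^{a,b}_{c,d}$. The cross ranks $\rho_V(x^+_i, x^-_j)$ impose further linear constraints of the form $\sum_{a \leq i,\, j \leq d} \sum_{b,c} r^{a,b}_{c,d}$. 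These constraints pin down many $r^{a,b}_{c,d}$ but, because the same marginals and cross ranks can arise from distinct joint distributions, they are not in general enough.

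To supply the missing constraints, I would enlarge $A$ by adjoining central charges of the form $\alpha_{i,j}^\lambda := \delta_{x^+_i} + \lambda\,\delta_{x^-_j}$ for $0 \leq i, j \leq \ell$ and a small, carefully chosen set of positive real coefficients $\lambda$ depending on $(i,j)$. Because each indecomposable $\bR^{a,b}_{c,d}$ has a distinct (and easily describable) dimension vector, the slope of $\bR^{a,b}_{c,d}$ along $\alpha_{i,j}^\lambda$ is a specific rational function of $\lambda$ and of the combinatorial data $(a,b,c,d)$. For each such charge I would use Lemma \ref{lemma:sumstability} together with a direct enumeration of sub-representations of $\bR^{a,b}_{c,d}$ (as in the proof of Proposition \ref{prop:completeequivalancetypeA}) to decide which indecomposables are $\alpha_{i,j}^\lambda$-stable, and then apply Corollary \ref{cor:directsumss} to identify $\HNtype{\alpha_{i,j}^\lambda}{V}$ at each slope level as an integer linear combination of the dimension vectors of the $\alpha_{i,j}^\lambda$-stable indecomposables sharing that slope. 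Since distinct indecomposables in $\Rep_\nf(Q)$ have linearly independent dimension vectors, such a group reveals the individual multiplicities of the participating stable indecomposables.

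The main obstacle will be organizing the construction so that (a) every indecomposable is $\alpha$-stable for at least one $\alpha \in A$ with a slope that isolates it from other contributions at the same level, and (b) the coefficients $\lambda$ are chosen to avoid the finitely many ``collision'' values where several indecomposables coincidentally share a slope. Both can be arranged by a combinatorial bookkeeping on the triples $(i,j,c)$ (or an analogous triple of boundary indices): for each such triple, a bounded number of choices of $\lambda$ recover the corresponding family of $r^{a,b}_{c,d}$ once the previously determined values are subtracted off, producing a cubic bound on $|A|$. The unstable indecomposables, whose HN filtrations can split their contribution across multiple slope levels, are handled by subtracting their (already recovered) contributions from the right-hand side before extracting the next group, in the spirit of the induction carried out in the proof of Proposition \ref{prop:completeequivalancetypeA}.
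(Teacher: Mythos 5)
Your proposal heads in essentially the same direction as the paper: the paper's complete set $A(\ell)$ consists precisely of central charges of the form $\delta_{x_a^+} + t\,\delta_{x_c^-}$ (together with single skyscrapers), and it recovers the multiplicities $r^{a,b}_{c,d}$ by a combinatorial induction after reading off the value of each $\HNtype{\alpha}{V}$ at a single distinguished slope, exactly as you outline using Proposition \ref{prop:slopesum}. So your central-charge ansatz and the shape of the recovery argument are both on target, and the cubic count you predict is what the paper obtains.

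However, there is a concrete false step in the middle. You assert that ``distinct indecomposables in $\Rep_\nf(Q)$ have linearly independent dimension vectors,'' and use this to read multiplicities off a slope level. This is not true: the space of dimension vectors has dimension $2(\ell+1)$, while the number of indecomposables in $\Rep_\nf(Q)$ grows like $\ell^4$, so wholesale linear independence is impossible (already for $\ell=1$ one finds $\udim_{\bR^{0,1}_{\infty,\infty}}+\udim_{\bR^{1,1}_{0,1}}=\udim_{\bR^{0,1}_{0,1}}+\udim_{\bR^{1,1}_{\infty,\infty}}$). What is actually needed, and what the paper proves in Lemma \ref{lemma:ladderlinearindep}, is linear independence \emph{within each cell} $\inds^k_S$ of a specific partition of $\inds$ indexed by the total dimension $k$ and the set $S$ of minimal vertices of the support. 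The central charges $\alpha^k_S$ must then be engineered so that the slope level $\lambda^k_S$ captures exactly the contributions of indecomposables $J$ with $\subrep{J_S}\in\inds^k_S$; this is precisely what the explicit choice of irrational coefficients $t_{p,q}$ in \eqref{eq:t} guarantees (Lemma \ref{lemma:specialladder} and Proposition \ref{prop:ladderhntypes}). Your plan to choose $\lambda$ ``to avoid collision values'' and to ``enumerate subrepresentations'' gestures at this, but it does not supply the organizing principle (the $(k,S)$ grading) that makes the local linear-independence statement true and the descending induction well-founded, so the argument as written does not close.
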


We let $\inds$ denote the set of (isomorphism classes of) indecomposable objects of $\Rep_\nf(Q)$ from Theorem \ref{thm:ladderindecs}; we will also adopt the convention \eqref{eqn:convs_ladder} for describing these indecomposables as $\bR^{a,b}_{c,d}$ for certain $a,b,c,d$ in the set $[\ell]_\infty := \set{0,1,\ldots, \ell,\infty}$.

\begin{rem} \label{rem:indecinc} Here are all the possible strict inclusions $I' \subsetneq I$ in $\inds$:
\begin{center}
\renewcommand{\arraystretch}{1.4}
\begin{tabular}{clcl} 
(1) & $\bR^{a',b}_{\infty,\infty} \subsetneq \bR^{a,b}_{\infty,\infty}$ & \qquad & if  $a < a'$ \\
(2) & $\bR^{\infty,\infty}_{c',d}  \subsetneq  \bR^{\infty,\infty}_{c,d}$ & \qquad & if $c< c'$ \\
(3) & $\bR^{a',b}_{\infty,\infty} \subsetneq \bR^{a,b}_{c,d}$ & \qquad & if $d < a'$ \\
(4) & 
$\bR^{\infty,\infty}_{c',d}  \subsetneq \bR^{a,b}_{c,d}$ &  \qquad &  if $c \leq   c'$ \\
(5) & $\bR^{a',b}_{c',d} ~\subsetneq \bR^{a,b}_{c,d}$ & \qquad & if $a \leq a'$ and $ c \leq c$.
\end{tabular}
\end{center}
\end{rem}

The {\bf support} of an indecomposable $I \in \inds$ is the subset $\supp(I) \subset Q_0$ consisting of all vertices $x$ for which the vector space $I_x$ is nontrivial (or equivalently, those vertices $x$ where $\udim_I(x)$ is nonzero). Since the indecomposables of $\Rep_\nf(Q)$ can be uniquely identified by their supports, we illustrate these five containments $I' \subsetneq I$ from Remark \ref{rem:indecinc} by depicting the supports of $I'$ (shaded red) and $I$ (outlined blue). 

\[ \figureunderlyinggrid{0.7}{
\node (2+) at (2.5,-0.5){${x_a^+}$};
\node (7+) at (7.5,-0.5){${x_b^+}$};
}{
\node at (3.5,-2.5){(1)};
\draw[redstyle] (6,0) rectangle (8,-1);
\draw[blue, very thick ] (2,-1)-- (2,0) -- (8,0) -- (8,-1) -- (5,-1.) --cycle;
}\hspace{4em}
\figureunderlyinggrid{0.7}{
\node (0-) at (0.5,-1.5){${x_c^-}$};
\node (4-) at (4.5,-1.5){${x_d^-}$};
}{
\node at (3.5,-2.5){(2)};
\draw[redstyle ] (1,-1) rectangle (5,-2);
\draw[blue, very thick  ] (0,-2) -- (0,-1) -- (2,-1) -- (5,-1.) -- (5,-2.0) --cycle;
}
\]
\[ \figureunderlyinggrid{0.7}{
\node (2+) at (2.5,-0.5){${x_a^+}$};
\node (7+) at (7.5,-0.5){${x_b^+}$};
\node (0-) at (0.5,-1.5){${x_c^-}$};
\node (4-) at (4.5,-1.5){${x_d^-}$};
}{
\node at (3.5,-2.5){(3)};
\draw[redstyle] (6,0) rectangle (8,-1);
\draw[blue, very thick,  fill opacity=0.1] (0,-2) -- (0,-1) -- (2,-1)-- (2,0) -- (8,0) -- (8,-1) -- (5,-1.) -- (5,-2.0) --cycle;
}\qquad
\figureunderlyinggrid{0.7}{
\node (2+) at (2.5,-0.5){${x_a^+}$};
\node (7+) at (7.5,-0.5){${x_b^+}$};
\node (0-) at (0.5,-1.5){${x_c^-}$};
\node (4-) at (4.5,-1.5){${x_d^-}$};
}{
\node at (3.5,-2.5){(4)};
\draw[redstyle ] (1,-1) rectangle (5,-2);
\draw[blue, very thick,  ] (0,-2) -- (0,-1) -- (2,-1)-- (2,0) -- (8,0) -- (8,-1) -- (5,-1.) -- (5,-2.0) --cycle;
}\]
\[
\figureunderlyinggrid{0.7}{
\node (2+) at (2.5,-0.5){${x_a^+}$};
\node (7+) at (7.5,-0.5){${x_b^+}$};
\node (0-) at (0.5,-1.5){${x_c^-}$};
\node (4-) at (4.5,-1.5){${x_d^-}$};;
}{
\node at (3.5,-2.5){(5)};
\draw[redstyle ] (2,-2) -- (2,-1) -- (4,-1)-- (4,0) -- (8,0) -- (8,-1) -- (5,-1.) -- (5,-2.0) --cycle;
\draw[blue, very thick,  fill opacity=0.1] (0,-2) -- (0,-1) -- (2,-1)-- (2,0) -- (8,0) -- (8,-1) -- (5,-1.) -- (5,-2.0) --cycle;
}
\]
Let $\leq$ be the flow partial order on $\VS$ from Definition \ref{def:floworder}. For each indecomposable $I \in \inds$, we let $\min(I)$ denote the set of minimal vertices lying in the subposet $(\supp(I),\leq)$.

\begin{definition}\label{def:indpart}
For every integer $k \geq 0$  and subset $S\subset Q_0$, define the set
\[ \inds_S^k:=\set{I\in \mathcal \inds \mid \sum_{x \in \VS} \dim I_x = k \text{ and } \min(I)=S}.\]
\end{definition}
There is a partition $\inds = \coprod_{k,S}\inds^k_S$ where $k$ ranges over $\bbZ_{>0}$ while $S$ ranges over subsets of $\VS$. The constituent part $\inds_S^k$ is empty unless $S$ has one of two possible forms: \begin{itemize}
    \item $\set{x}$ for any vertex $x = x_a^+$ or $x=x_c^-$,
    \item $\set{x_a^+,x_c^-}$ with $0 \leq c < a \leq \ell$.
\end{itemize}
Here are the supports of certain $I \in \inds_S^6$ for nontrivial choices of $S\subset Q_0$:
\[{\scriptsize \figureunderlyinggrid{0.7}{
\node (2+) at (2.5,-0.5){$x_a^+$};
\node (1-) at (1.5,-1.5){$x_c^-$};
}{
\node at (3.5,-2.75){$S=\set{x_a^+,x_c^-}$};
\draw[blue, very thick,  fill opacity=0.1] (1,-2) -- (1,-1) -- (2,-1)-- (2,0) -- (6,0) -- (6,-1) -- (3,-1.) -- (3,-2.0) --cycle;
}\qquad
\figureunderlyinggrid{0.7}{
\node (0-) at (0.5,-1.5){$x_c^-$};
}{
\node at (3.5,-2.75){$S=\set{x_c^-}$};
\draw[blue, very thick,  fill opacity=0.1] (0,-2) -- (0,-1) -- (2,-1) -- (7,-1.) -- (7,-2.0) --cycle;
}}
\]
\[{\scriptsize 
\figureunderlyinggrid{0.7}{
\node (2+) at (2.5,-0.5){$x_a^+$};
}{
\node at (8.5,-2.75){$S=\set{x_a^+}$};
\draw[blue, very thick,  fill opacity=0.1] (2,-2) -- (2,0) -- (6,0) -- (6,-1) -- (4,-1.) -- (4,-2.0) --cycle;
}\hspace{-2.em}
\figureunderlyinggrid{0.7}{
\node (2+) at (2.5,-0.5){$x_a^+$};
}{
\node at (3.5,-2.75){};
\draw[blue, very thick,  fill opacity=0.1] (2,-1)-- (2,0) -- (8,0) -- (8,-1) -- (5,-1.) --cycle;
}}
\]

\begin{lemma} \label{lemma:ladderlinearindep}
Given any integer $k>0$ and subset $S \subset \VS$ of vertices, the set of dimension vectors
$\set{\udim_I \mid I\in\inds_{S}^k}$ is linearly independent in the vector space of maps $\VS \to \bbR$.
\end{lemma}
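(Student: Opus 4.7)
The plan is to handle each of the three possible shapes of $S$ separately and exhibit, in each case, an ordering on $\inds_S^k$ together with a ``distinguishing vertex'' $v(I)\in\VS$ for each $I\in\inds_S^k$ so that the matrix $M_{ij}=\udim_{I_j}(v(I_i))$ is (lower) triangular with $1$'s on the diagonal. Nonsingularity of $M$ gives linear independence of the rows $\udim_{I_j}$.

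First I would enumerate $\inds_S^k$. Since $x_a^+$ has no vertices of $\supp(\bR_{c,d}^{a,b})$ strictly below it in the flow order whenever $a$ is finite, $x_a^+$ is automatically minimal when it belongs to the support. A short case analysis with Remark~\ref{rem:indecinc} shows: (1) if $S=\{x_c^-\}$ then the only possibility is $I=\bR_{c,d}$, forced by the dimension constraint to satisfy $d=c+k-1$, so $|\inds_S^k|\le 1$ and there is nothing to prove; (2) if $S=\{x_a^+,x_c^-\}$ with $c<a$, then $\inds_S^k=\{\bR_{c,d}^{a,b}\mid b+d=k+a+c-2,\ a\le d\le b\le\ell\}$; (3) if $S=\{x_a^+\}$, then $\inds_S^k$ consists of the (possibly absent) element $I_0:=\bR^{a,a+k-1}$ together with the modules $\bR_{a,d}^{a,b}$ with $b+d=k+2a-2$ and $a\le d\le b\le\ell$.

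In case (2), enumerate the indecomposables as $I_1,\dots,I_N$ with $d_1>d_2>\cdots>d_N$ and set $v(I_i):=x_{d_i}^-$. The bottom-row support of $I_j$ is $[c,d_j]$, hence $\udim_{I_j}(x_{d_i}^-)=1$ precisely when $d_i\le d_j$, that is, when $i\ge j$. Thus $M$ is lower triangular with ones on the diagonal. In case (3), set $v(I_0):=x_{a+k-1}^+$ and, ordering the remaining indecomposables by decreasing $d$, set $v(I_j):=x_{d_j}^-$ for $j\ge1$. The relation $b_j+d_j=k+2a-2$ together with $d_j\ge a$ forces $b_j\le a+k-2<a+k-1$, so $v(I_0)$ lies only in the support of $I_0$. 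Bottom-row supports of $I_0$ are empty, so $v(I_j)$ for $j\ge1$ is outside $\supp(I_0)$; the remaining $N\times N$ block is lower triangular with ones on the diagonal by the argument of case (2). Hence the full matrix $M$ is again lower triangular with unit diagonal, and linear independence follows.

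The only nontrivial step is the enumeration of $\inds_S^k$ for the three admissible shapes of $S$, which amounts to unwinding the flow partial order on the ladder quiver; after that, the triangularity argument is routine.
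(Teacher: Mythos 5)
Your proof is correct and follows essentially the same strategy as the paper's: both exploit that the members of $\inds_S^k$ are injectively parameterised by a single endpoint (you use $d$, the paper uses $b$; the two determine each other since $b+d$ is constant on $\inds_S^k$) and then solve a unitriangular system of evaluations at suitably chosen vertices. The paper evaluates along the top row and phrases the triangularity as a descending induction on $b$, whereas you evaluate along the bottom row (plus one top-row vertex to isolate $\bR^{a,a+k-1}$), but this is only a cosmetic difference.
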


\begin{proof}
It suffices to consider the nontrivial cases where $|\mathcal \inds_S^k|>1$, which can only occur when either $S = \{x_i^+\}$ for some $i$ or when $S = \{x_i^+,x_j^-\}$ for some $i > j$. We claim that every indecomposable $\bR^{a,b}_{c,d} \in \inds^k_S$ is uniquely determined by $b$. To verify this claim, note first that if $S = \{x_i^+\}$ for some $i \in \set{0,1\dots,\ell}$ then we must have $a = i$ and $c \in \set{i,\infty}$, with $c = \infty$ occurring if and only if $b = i+k-1$. Otherwise, if $S=\{x_i^+,x_j^-\}$ for $i>j$, then $a=i$ and $c=j$. In both cases, $d$ is determined by the formula
\[
k = (b-a)+(d-c)+2.
\]
Thus, the map $\iota:\inds_S^k \to [\ell]_\infty$ sending each $\bR^{a,b}_{c,d}$ to $b$ is injective, as claimed above. Now consider an $\bbR$-linear combination of the form
\[
v:=\sum_{I\in\mathcal \inds_S^k} r_I\udim_I.
\] 
A brief examination of the supports of indecomposables lying in $\inds^k_S$ reveals that the value $v(x^+_j)$ depends only on those $I$ which satisfy $\iota(I) \geq j$. Thus, the real numbers $r_I$ may be recovered by descending induction on $\iota(I)$. 
\end{proof}

Let us fix a collection of irrational numbers $\set{t_{p,q}}$ indexed by pairs of integers $0 \leq q < p$ so that the following inequalities hold: 
\begin{align}\label{eq:t}
\frac{q}{p-q} < t_{p,q} < \frac{q+1}{p -(q+1)}.
\end{align}

\begin{definition}\label{def:ladderparams}
For each integer $k> 0$ and subset $S\subset Q_0$ for which $\inds_S^k$ is nonempty, define the central charge $\alpha^k_S:\VS \to \bbR$ as 
\[
\alpha_S^k:=\begin{cases}\delta_{x} &\text{if }S=\{x\}. \\
\delta_{x_a^+}+ t_{k,a-c} \cdot \delta_{x_c^-}&\text{if }S=\{x_a^+,x_c^-\}.
\end{cases}
\]
Here $\delta_x$ is the skyscraper central charge at vertex $x$ (from Definition \ref{def:skyscraper-weight}) while the $t_{\bullet,\bullet}$ are irrational numbers chosen to satisfy \eqref{eq:t}.
\end{definition} 

We now compute the HN type of every indecomposable $I \in \inds$ along these central charges. The calculation below makes essential use of spanning subrepresentations $\subrep{I_S}$, which were introduced in Definition \ref{def:spanrep}.

\begin{lemma}
    \label{lemma:specialladder} Fix any $k> 0$ and $S\subset Q_0$ for which $\inds_S^k$ is nonempty, and define
    \[
\lambda_S^k:=\begin{cases}1/k&\text{if }|S|=1,\\
(1+t_{k,a-c})/k&\text{if }S=\set {x_a^+,x_c^-}.
\end{cases}
\]
The following assertions are equivalent for every indecomposable $I \in \inds^{k'}_{S'}$:
\begin{enumerate}
    \item The $\alpha_S^k$-slope of $I$ equals $\lambda_S^k$. 
    \item Both $k = k'$ and $S \subset S'$ hold.
\end{enumerate} 
If either assertion is true, then we also have that $I$ is $\alpha^k_S$-semistable if and only if $S = S'$.
\end{lemma}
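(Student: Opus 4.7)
The plan is to split into the cases $|S|=1$ and $|S|=2$, establish $(1)\Leftrightarrow(2)$ by direct slope computation using the irrationality of $t_{k,a-c}$, and then derive the semistability dichotomy by examining subrepresentations of $I$ in the flow order.

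For $(2)\Rightarrow(1)$, I would use that if $k=k'$ and $S\subset S'=\min(I)\subset\supp(I)$, then every vertex $x\in S$ satisfies $\dim I_x=1$, since nestfree indecomposables $\bR^{a,b}_{c,d}$ are at most one-dimensional at each vertex by Theorem~\ref{thm:ladderindecs}. Substituting directly into the slope formula produces $\lambda_S^k$ in both cases. For $(1)\Rightarrow(2)$, I would expand
\[
\mu_{\alpha_S^k}(I) \;=\; \frac{\dim I_{x_a^+}+t_{k,a-c}\cdot\dim I_{x_c^-}}{k'}
\]
in the two-element case and equate it with $\lambda_S^k=(1+t_{k,a-c})/k$. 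Since $t_{k,a-c}$ is irrational by Definition~\ref{def:ladderparams}, matching rational and $t$-linear components separately forces $\dim I_{x_a^+}=\dim I_{x_c^-}=k'/k$. As these dimensions lie in $\{0,1\}$ and $k'>0$, this yields $k=k'$ and $\{x_a^+,x_c^-\}\subset\supp(I)$; the single-element case reduces to the same argument without the $t$-term. The final step invokes the classification of nestfree indecomposables together with the pinning equality $|\supp(I)|=k$ to upgrade the support containment to $S\subset\min(I)=S'$.

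For the semistability claim, assume $(1)/(2)$ holds. Subrepresentations of $I$ correspond bijectively to up-closed subsets $U\subset\supp(I)$ under the flow order of Definition~\ref{def:floworder}, and each yields a slope of the form $\mu_{\alpha_S^k}(I_U)=\alpha_S^k(S\cap U)/|U|$. If $S=S'=\min(I)$, every proper up-closed $U$ omits at least one vertex of $S$, and the tight bracket \eqref{eq:t} governing $t_{k,a-c}$ forces $\mu_{\alpha_S^k}(I_U)\leq\lambda_S^k$, so $I$ is $\alpha_S^k$-semistable. Conversely, if $S\subsetneq S'$, pick $y\in S'\setminus S$; since $y\in\min(I)$, the set $U=\supp(I)\setminus\{y\}$ is up-closed in $\supp(I)$, and the subrepresentation $I_U$ inherits every $S$-weighted dimension of $I$, giving $\mu_{\alpha_S^k}(I_U)=k\lambda_S^k/(k-1)>\lambda_S^k$ and destroying semistability. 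The main obstacle is the closing step of $(1)\Rightarrow(2)$: the slope alone only records $k'$ and the values $\dim I_x$ for $x\in S$, so strengthening $S\subset\supp(I)$ to $S\subset\min(I)$ requires a combinatorial exploitation of the constraint $c\leq a\leq d\leq b$ defining nestfree indecomposables, and I expect this step to proceed by short case analysis on $|S|,|S'|\in\{1,2\}$ in the spirit of Lemma~\ref{lemma:ladderlinearindep}.
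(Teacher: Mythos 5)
Your $(2)\Rightarrow(1)$ direction and your treatment of the semistability dichotomy are essentially the paper's argument: thinness of the nestfree indecomposables plus a direct slope computation, and then an analysis of subrepresentations (the paper phrases these as spanning subrepresentations $\subrep{I_{S''}}$ for $S''\subsetneq S$ rather than general up-closed sets, which amounts to the same thing for thin modules). One caveat: in the stable direction you assert that the bracket \eqref{eq:t} "forces" $\mu_{\alpha^k_S}(I_U)\leq\lambda^k_S$, but this is precisely where the work is; you need to write out the two inequalities $1/|\upclosure x_a^+\cap\supp(I)|\leq(1+t_{k,a-c})/k\geq t_{k,a-c}/|\upclosure x_c^-\cap\supp(I)|$ and check them against \eqref{eq:t} using $k=(b-a)+(d-c)+2$ and $d\geq a$, as the paper does.

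The genuine gap is the step you yourself flag as "the main obstacle": upgrading $S\subset\supp(I)$ to $S\subset\min(I)=S'$ in $(1)\Rightarrow(2)$. No case analysis will close it, because that implication is false as literally stated. Take $\ell\geq 2$, $S=\set{x_1^+}$ and $k=3$, so that $\inds^3_S$ is nonempty (it contains $\bR^{1,2}_{1,1}$), $\alpha^k_S=\delta_{x_1^+}$ and $\lambda^k_S=1/3$. The indecomposable $I=\bR^{0,1}_{0,0}$ has total dimension $3$ and $x_1^+\in\supp(I)$, so $\mu_{\delta_{x_1^+}}(I)=1/3=\lambda^k_S$ and assertion (1) holds; yet $\min(I)=\set{x_0^+}$, so $S\not\subset S'$. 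As you correctly observe, the slope records only $k'$ and $S\cap\supp(I)$, and nothing about minimality. The paper's own proof in fact proves only the equivalence of (1) with "$k=k'$ and $S\subset\supp(I)$" --- its two displayed slope formulas determine $S\cap\supp(I)$ and then the proof simply declares the equivalence --- and it is this $\supp$-version that Proposition \ref{prop:ladderhntypes} actually uses (its Cases 2 and 3 are split according to whether $S\subset\supp(I)$). So the correct resolution is to prove assertion (2) in the form "$k=k'$ and $S\subset\supp(I)$" and note that the statement's "$S\subset S'$" is a misstatement, rather than to hunt for a combinatorial bridge that does not exist. Note also that the "only if" half of the semistability claim is then genuinely needed in the stronger form: semistability plus $\mu_{\alpha^k_S}(I)=\lambda^k_S$ forces $\subrep{I_S}=I$ and hence $S'=\min(I)=S$, which is exactly the paper's one-line argument and replaces your destabilising construction for $S\subsetneq S'$ (which remains valid but now only covers part of the failure locus, since $S\subset\supp(I)$ no longer implies $S\subset S'$).
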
 
\begin{proof} 
We abbreviate $\alpha^k_S$ to $\alpha$ and $\lambda^k_S$ to $\lambda$. Recall from Definition \ref{def:indpart} that either $S=\set x$ or $S = \set{x_a^+,x_c^-}$ with $a > c$. In the first case, the desired equivalence follows from computing
\begin{align}\label{eq:slope1}
\mu_{\alpha}(I)=\begin{cases}1/{k'}&\text{if }x\in \supp(I) \\ 0&\text{otherwise}\end{cases}.
\end{align}
In the case $S=\set {x_a^+,x_c^-}$ with $a > c$, we similarly have:
\begin{align}\label{eq:slope2}
\mu_{\alpha}(I)=\begin{cases}{1}/{k'}&\text{if }S \cap \supp (I)=\set {x_a^+},\\
t_{k,a-c}/{k'}&\text{if } S \cap  \supp(I)=\set {x_c^-},\\
(1+t_{k,a-c})/k'&\text{if }S \subset \supp(I),\\
0&\text{otherwise.}
\end{cases}
\end{align}
Since $t_{k,a-c}$ is irrational by assumption, $\mu_{\alpha}(I)$ determines $S \cap \supp(I)$, as desired.

We now assume that $I$ is $\alpha$-semistable (in addition to satisfying $\mu_\alpha(I)=\lambda$), and seek to show that $S = S'$. Since $\subrep{I_S}$ is a subrepresentation of $I$ with $\mu_\alpha(\subrep{I_S}) \geq \lambda$, we must have $I=\subrep{I_S}$. This forces $I$ to lie in $\inds_S^k$, thus ensuring $S' = S$ as desired. Conversely, if $S=S'$, then $I$ lies in $\inds_S^k$. By Lemma \ref{lemma:sumstability}, it suffices to show that indecomposable  subrepresentations  $I' \subsetneq I$ have smaller $\alpha$-slope than $I$. It is readily checked that any subrepresentation $I' \subset I$ must be both equalised and nestfree, so it suffices to consider only those $I'$ which have been listed in Remark \ref{rem:indecinc}. Of these, the $I'$ with nonzero $\alpha$-slope all have the form $\subrep{I_{S'}}$ for $S'\subsetneq S$. When $|S|=1$, there are no such $I'$ to consider; and when $S=\set{x_a^+,x_c^-}$ for $a > c$, the only relevant $I'$ are $\subrep{I_{x_a^+}}$ and $\subrep{I_{x_c^-}}$. Thus, the desired semistability of $I$ reduces to verifying two inequalities:
\[
\frac{1}{|\set{x \geqslant x_a^+} \cap \supp(I)|} \leq \frac{1+t_{k,a-c}}{k}  \geq \frac{t_{k,a-c}}{|\set{x \geqslant x_c^-} \cap \supp(I)|}.
\]
Both hold by the constraints imposed on $t_{k,a-c}$ in \eqref{eq:t}. 
\end{proof}

We recall from \eqref{rem:hntypenotation} that given a central charge $\alpha:\VS \to \bbR$, the HN type $\HNtype{\alpha}{V}$ of any $V \in \Rep_\nf(Q)$ may be viewed as a function $\bbR \to \bbN^\VS$. The following result describes the values of the function associated to each central charge $\alpha^k_S$ from Definition \ref{def:ladderparams} at the corresponding slope $\lambda^k_S$ from Lemma \ref{lemma:specialladder}.

\begin{prop} \label{prop:ladderhntypes}Consider any $(k,S)$ such that $\inds_S^k$ is nonempty, and let $I\in \inds$. Then for $\alpha := \alpha^k_S$ and $\lambda := \lambda^k_S$, we have   \begin{align}\label{eq:computeHNladdergood}
        \HNtype{\alpha}{I}(\lambda)=\begin{cases}
            \udim_{\subrep{I_S}}&\text{if } \subrep{I_S}\in\inds_S^k\\
            0&\text{otherwise}.
        \end{cases}
    \end{align}
\end{prop}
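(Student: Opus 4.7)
The plan is to analyze the HN filtration of $I$ along $\alpha := \alpha^k_S$ via the short exact sequence
\[
0 \longrightarrow J \longrightarrow I \longrightarrow I/J \longrightarrow 0,
\]
where $J := \subrep{I_S}$. By definition of the spanning subrepresentation, $J_x = I_x$ for every $x \in S$, so $(I/J)_x = 0$ for every $x \in S$. Since the central charge $\alpha$ is supported on $S$, both $I/J$ and every one of its subrepresentations have $\alpha$-slope $0$; consequently $I/J$ is $\alpha$-semistable of slope $0$.

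A key preliminary observation I would establish (separately in each sub-case below) is that the HN filtration of $J$ has strictly positive slopes whenever $J \neq 0$. Granted this, the HN filtration of $I$ is obtained from that of $J$ by appending $I/J$ as a terminal step of slope $0$: if $0 = J^0 \subsetneq \cdots \subsetneq J^m = J$ has successive quotients of positive decreasing slopes $\mu_1 > \cdots > \mu_m > 0$, then $0 \subsetneq J^1 \subsetneq \cdots \subsetneq J^m \subsetneq I$ has $\alpha$-semistable successive quotients with strictly decreasing slopes, which by uniqueness in Theorem \ref{thm:HN_existence} identifies it as the HN filtration of $I$. Since $\lambda = \lambda^k_S > 0$, it follows that $\HNtype{\alpha}{I}(\lambda)$ is nonzero exactly when the slope $\lambda$ appears in the HN filtration of $J$, with the same associated dimension vector.

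In the affirmative direction, if $J \in \inds^k_S$ then Lemma \ref{lemma:specialladder} immediately gives that $J$ is $\alpha$-semistable of slope $\lambda$, so its HN filtration is just $0 \subsetneq J$ and we obtain $\HNtype{\alpha}{I}(\lambda) = \udim_J$. The case $J = 0$ is immediate: every subrepresentation of $I$ has $\alpha$-slope $0$, so $I$ itself is $\alpha$-semistable of slope $0 \neq \lambda$ and $\HNtype{\alpha}{I}(\lambda) = 0$.

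The main obstacle is the remaining case where $J \neq 0$ but $J \notin \inds^k_S$, where one must verify that the HN filtration of $J$ has no step of slope $\lambda$. For $|S| = 1$, Proposition \ref{prop:filtrationsurjectivity} applied to $J$ gives that the HN filtration of $J$ is the single step $0 \subsetneq J$ with rational slope $1/|\supp(J)| \neq 1/k = \lambda$. For $|S| = 2$ with $S = \set{x_a^+,x_c^-}$, I would carry out a case analysis on the structure of $J$: either $J$ is an indecomposable of the form $\bR^{a,b'}_{c,d'}$ with $|\supp(J)| = k' \neq k$, or $J$ is a direct sum $\bR^{a,b'} \oplus \bR_{c,d'}$ when $a$ exceeds the right endpoint of the bottom row of $I$, or $S \cap \supp(J)$ is a singleton. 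Using the spanning subrepresentations $\subrep{J_{x_a^+}}$ and $\subrep{J_{x_c^-}}$ to locate the maximum-slope subrepresentation, I would compute the HN filtration of $J$ explicitly; its successive quotients always have slopes of one of the forms $1/n$, $t_{k,a-c}/n$, or $(1+t_{k,a-c})/k'$ with $k' \neq k$. None of these equals $\lambda = (1+t_{k,a-c})/k$: the rational slopes differ from the irrational $\lambda$, while equalities $t_{k,a-c}/n = \lambda$ or $(1+t_{k,a-c})/k' = \lambda$ would force $t_{k,a-c}$ to take a specific rational value, contradicting the irrationality imposed in \eqref{eq:t}. Thus $\HNtype{\alpha}{I}(\lambda) = 0$ throughout this case.
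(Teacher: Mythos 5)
Your proposal is correct and reaches the stated conclusion, but it takes a genuinely different (and in one respect more careful) route than the paper. The paper splits on whether $S\subset\supp(I)$: when it is not, it argues directly that every successive quotient of $\bHN^\bullet_\alpha(I)$ has slope in $\bbQ\cup t_{k,a-c}\cdot\bbQ$, hence never $\lambda$; when it is, it asserts that the HN filtration of $I$ is the \emph{two-step} filtration $0\subsetneq\subrep{I_S}\subsetneq I$, on the grounds that $\subrep{I_S}$ is $\alpha$-semistable. You instead concatenate the full HN filtration of $J=\subrep{I_S}$ with the terminal slope-$0$ quotient $I/J$, and then rule out the slope $\lambda$ inside $\bHN^\bullet_\alpha(J)$. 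Your version buys robustness: $\subrep{I_S}$ need not be semistable --- e.g.\ for $I=\bR^{0,5}_{0,1}$ and $S=\set{x_3^+,x_0^-}$ one gets $\subrep{I_S}=\bR^{3,5}\oplus\bR_{0,1}$, whose summands have the distinct slopes $1/3$ and $t/2$ --- so your willingness to let $J$ carry a multi-step filtration is exactly what is needed in that sub-case, and your classification of the possible slopes ($1/n$, $t_{k,a-c}/n$, $(1+t_{k,a-c})/k'$) together with the irrationality of $t_{k,a-c}$ correctly finishes the argument. Two small caveats: the appeal to Proposition \ref{prop:filtrationsurjectivity} for the $|S|=1$ case is slightly off-label --- what you actually need is the one-line observation that $J=\subrep{J_x}$ is $\delta_x$-semistable because any proper subrepresentation either misses $J_x$ (slope $0$) or contains $J_x$ and hence all of $J$; and the final case ($|S|=2$, $S\subset\supp(J)$, $J\notin\inds^k_S$) is left at the level of a plan rather than executed, so in a full write-up you would need to actually exhibit the HN filtration of $J$ (its steps are among $\subrep{J_{x_a^+}}$, $\subrep{J_{x_c^-}}$ and $J$ itself) to certify the claimed list of slopes. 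Also note that the positivity of all slopes in $\bHN^\bullet_\alpha(J)$ follows uniformly from $J=\subrep{J_S}$ (a final quotient of slope $0$ would force $J^{m-1}\supseteq\subrep{J_S}=J$), so no sub-case analysis is needed there.
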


\begin{proof}Letting $I^\bullet$ be the HN filtration $\bHN^\bullet_\alpha(I)$, we consider three possible cases:

{{\bf Case 1}}: $I \in \inds^k_S$. In this case, $I=\subrep{I_S}$ and we know by Lemma \ref{lemma:specialladder} that $I$ is $\alpha$-semistable of $\alpha$-slope $\lambda$. Hence $I^\bullet$ is trivial and each side of \eqref{eq:computeHNladdergood} is equal to  $\udim_I$.

 {{\bf Case 2}}: $I \notin \inds^k_S$ and $S \not\subset \supp(I)$
 . In particular, $\subrep{I_S}$ does not lie in $\inds_S^k$ so that the right-hand side of \eqref{eq:computeHNladdergood} is $0$. Here we have $S\not\subset \supp (I^i/I^{i-1})$ for all $i$.  If $S=\set x$, then $S \cap\textrm{supp}(I)= \emptyset$ so that from Definition \ref{def:ladderparams}, $\mu_\alpha(I^i/I^{i-1})=0 \neq \lambda$ for any step $i$ of the filtration, whence $\HNtype{\alpha}{I}(\lambda) = 0$. On the other hand, if $S=\set{x_a^+,x_c^-}$, then $S \cap\textrm{supp}(I)$ can be $\emptyset$, $\set{x_a^+}$ or $\set{x_c^-}$. Then from Definition \ref{def:ladderparams}, the slopes $\mu_\alpha(I^i/I^{i-1})$  lie in either $\bbQ$ or in $t_{k,a-c} \cdot \bbQ$, neither of which contain $\lambda = (1+t_{k,a-c})/k$, whence $\HNtype{\alpha}{I}(\lambda) = 0$. 

 {{\bf Case 3}: $I\notin \inds_S^k$ and $S \subset \supp(I)$.} In this case, we claim that $I^\bullet$ is the one-step filtration $0\subsetneq \subrep{I_S}\subsetneq I$. By uniqueness of HN filtrations, it suffices to show that $\subrep{I_S}$ and $I/\subrep{I_S}$ are $\alpha$-semistable, with $\mu_\alpha(\subrep{I_S})>\mu_\alpha(I/\subrep{I_S})$. We know from Lemma \ref{lemma:specialladder} that $\subrep{I_S}$ is $\alpha$-semistable with strictly positive slope. And since  $S\cap \supp(I/\subrep{I_S})$ is empty by constructions, all the subrepresentations of $I/\subrep{I_S}$ have $\alpha$-slope $0$. Thus, we obtain the desired result: $\HNtype{\alpha}{V}$ equals $\udim_{\subrep{I_S}}$ whenever $\subrep{I_S}$ lies in $\inds^k_S$, and is trivial otherwise.
\end{proof}

To complete the proof of Theorem \ref{thm:mainladder}, recall the collection of central charges $A=\set{\alpha_S^k}$ from Definition \ref{def:ladderparams}, and consider some $V \in \Rep_\nf(Q)$ with decomposition $V \simeq \bigoplus_{I\in \inds}I^{r_I}$. We show that the multiplicities $(r_I)_{I\in\inds}$ can be recovered from $\HNtype AV$ by descending strong induction on the partial order $\subset$ from Remark \ref{rem:indecinc}. To this end, note that for indecomposables $I\in \inds_S^k$ and $J \in \inds$, we have $\subrep{J_S}=I$ if and only if $I$ is a subrepresentation of $J$. By combining Propositions \ref{prop:slopesum} and \ref{prop:ladderhntypes}, we get
\[
\HNtype{\alpha^k_S}{V}(\lambda^k_S)=\sum_{I\in \mathcal \inds_{S}^k}r^+_I \cdot \udim_{I},
\]
where $r^+_I := \sum_{J}r_{J}$ for $J$ ranging over representations in $\inds$ which satisfy $J \supseteq I$. By  Lemma \ref{lemma:ladderlinearindep}, the integer $r^+_I$ can be obtained from $\HNtype A V$ for each $I\in\inds_S^k$. Finally, assume by the inductive hypothesis that $r_J$ is known for all $J\supsetneq I$ in $\inds$.  Now  $r_I$ can be recovered from $\HNtype A V$  as the difference $r_I=r^+_I-\sum_{ J\supsetneq I}r_J$.

\begin{rem}
It follows from Definition \ref{def:ladderparams} that the set of complete central charges $A$ has cardinality 
\[
c(\ell) := \frac{2\ell^3 + 3\ell^2 + 13 \ell + 12}{6},
\] and hence grows cubically with the length $\ell$ of the underlying ladder quiver. Moreover, one can choose different irrational numbers $t_{p,q}$ in \eqref{eq:t} to generate uncountably many other such complete sets of the same cardinality $c(\ell)$. It is not clear to us (for large $\ell$) whether {\em all} complete sets of central charges for nestfree representations of ladder quivers can be obtained in this fashion; nor is it clear whether $c(\ell)$ is the minimal size for such a complete set.
\end{rem}

\bibliographystyle{abbrv}
\bibliography{refs}
\end{document}